\newcommand{\R}{\mathbf{R}}
\newcommand{\Z}{\mathbf{Z}}
\newcommand{\N}{\mathbf{N}}
\newcommand{\symm}{{\mbox{\bf S}}}  
\newcommand{\diam}{{\mbox{diam}}}  
\newcommand{\conv}{\mathop{\bf conv}}
\newcommand{\E}{\mathop{\bf E{}}}
\newcommand{\Co}{{\mathop {\bf Co}}} 
\newtheorem{theorem}{Theorem}[section]
\newtheorem{prop}{Proposition}[section]
\newtheorem{lemma}[theorem]{Lemma}
\title{Convergence of the random Abelian sandpile}
\author{Ahmed Bou-Rabee}
\begin{document}

\begin{abstract}
We prove that Abelian sandpiles with random initial states converge almost surely to unique scaling limits.  The proof follows the Armstrong-Smart program for stochastic homogenization of uniformly elliptic equations.  

Using simple random walk estimates, we prove an analogous result for the divisible sandpile and identify its scaling limit as exactly that of the averaged divisible sandpile. As a corollary, this gives a new quantitative proof of known results on the stabilizability of Abelian sandpiles. 
\end{abstract}

	\maketitle

\begin{figure}[!ht]
\begin{centering}
 \includegraphics[width=0.4\textwidth]{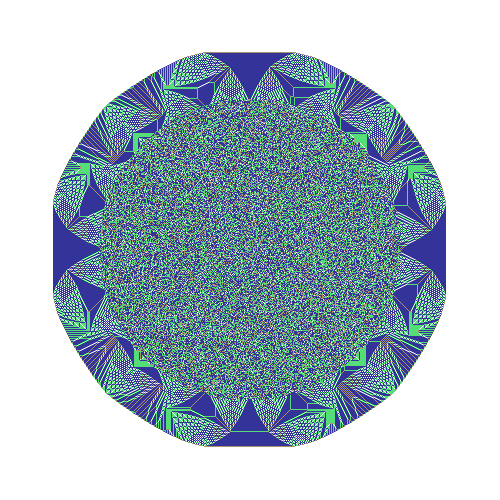}
 \includegraphics[width=0.4\textwidth]{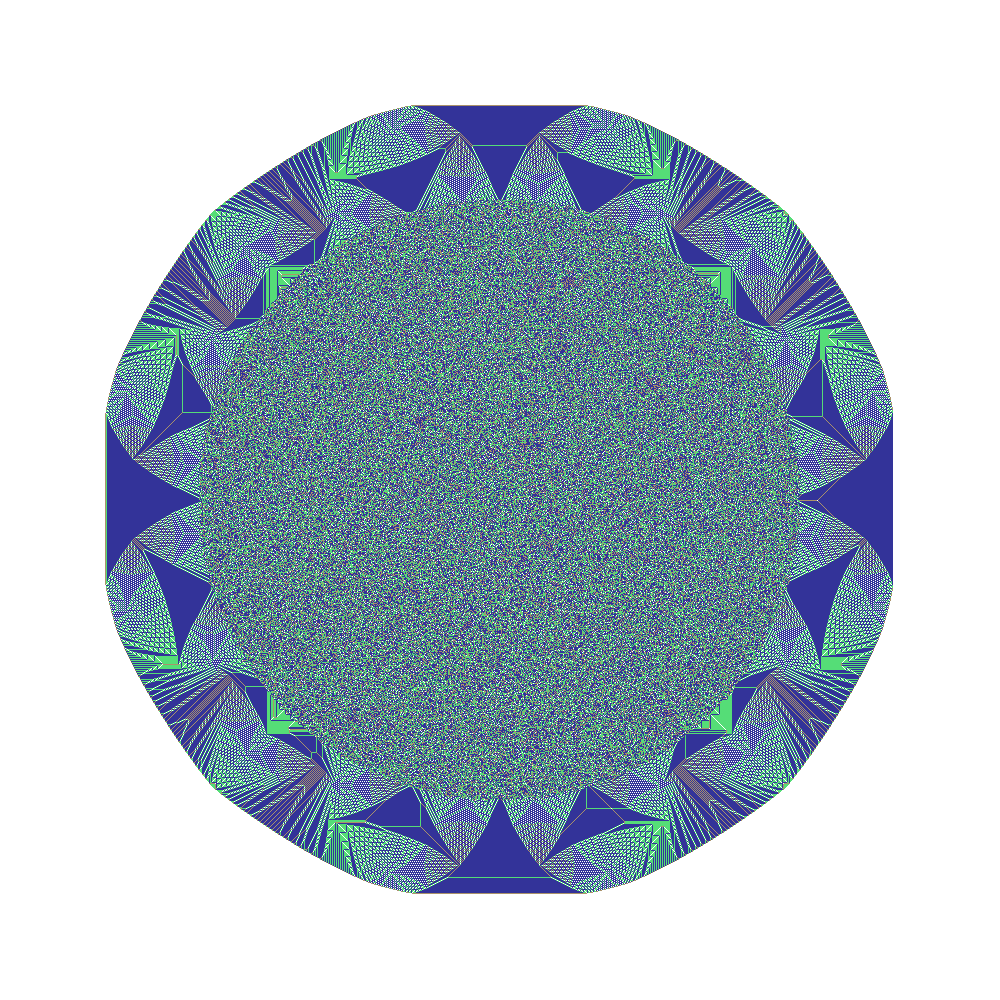} \\
 \includegraphics[width=0.4\textwidth]{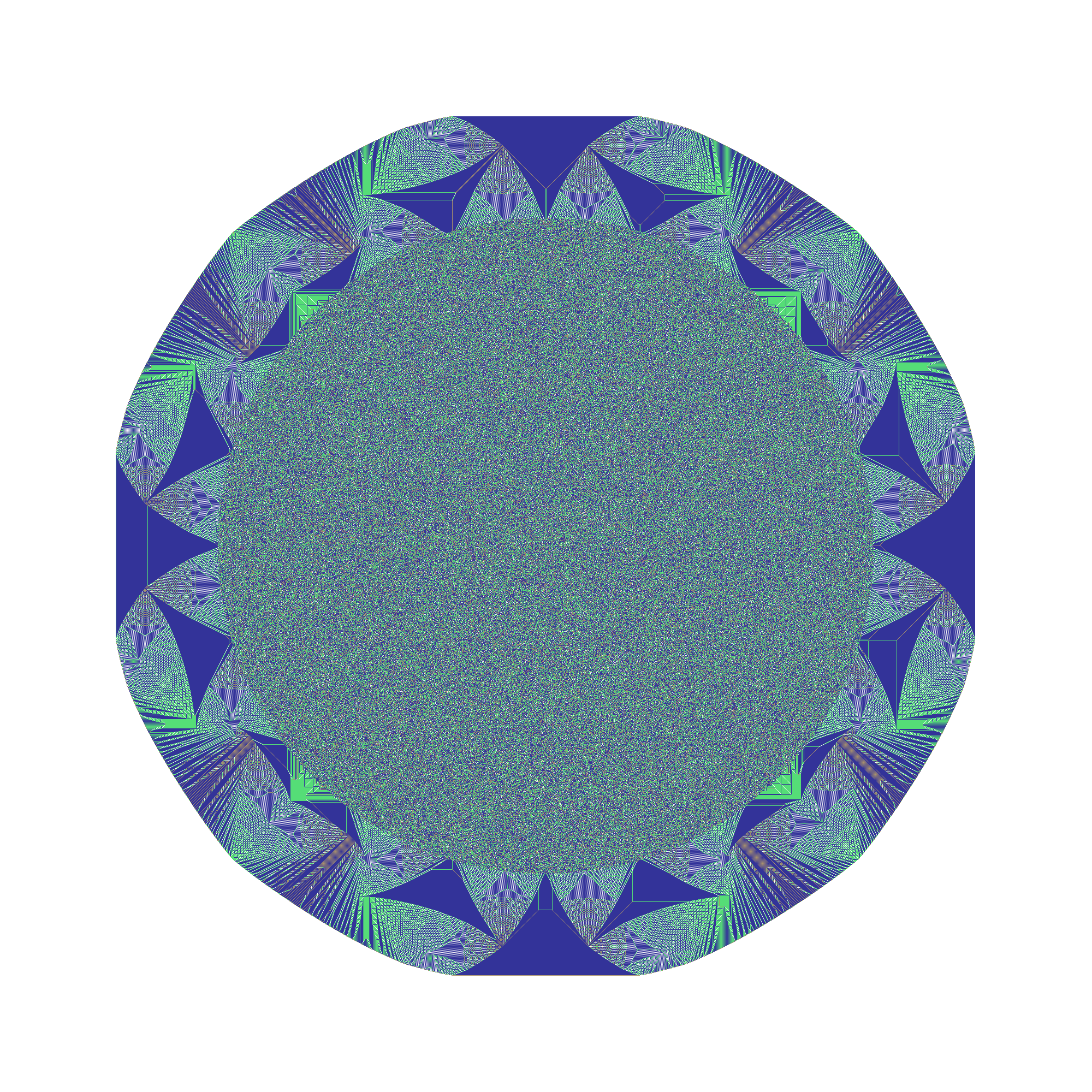}
  \includegraphics[width=0.4\textwidth]{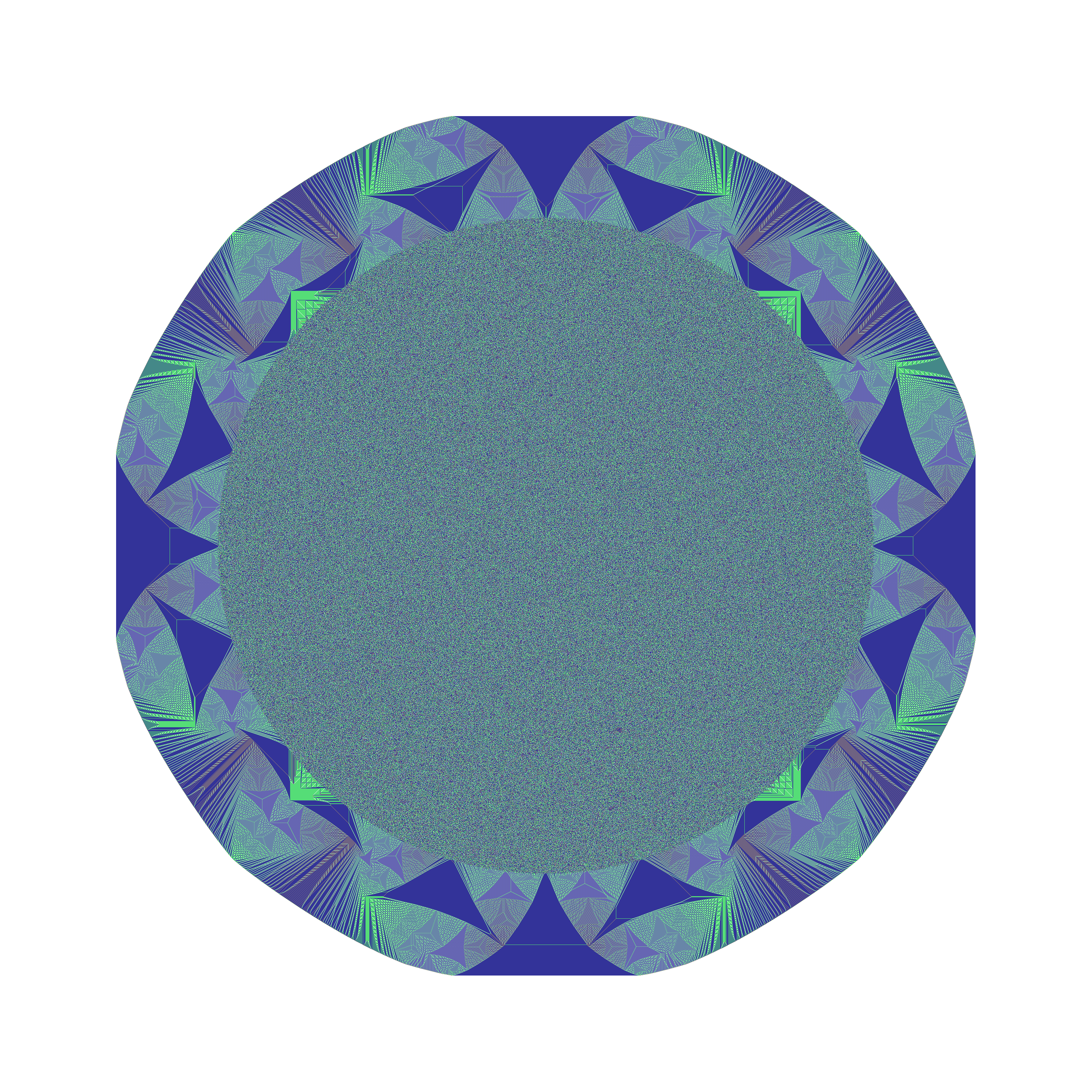}
 \caption{ For each $x$ in a ball of radius $n = 500,1000,2000,$ and 3000, flip a fair coin. If it lands heads, add 3 chips to $x$, otherwise add 5 chips. Then, stabilize. Sites with 0,1,2, and 3 chips are represented by white, brown, green, and blue respectively. }
\end{centering}
\end{figure}

\section{Introduction}
	The Abelian sandpile is a simple combinatorial model which produces striking, fractal-like patterns \cite{bak1987self,levine2010sandpile}.	
	Start with an initial {\it sandpile}, a function $\eta: \Z^d \to \Z$, which can be thought of as a configuration 
	of indistinguishable chips or grains. Whenever a site has more chips than it has neighbors, it is {\it unstable} and {\it topples}, giving one chip to each of its $2 d$ neighbors. A sandpile is {\it stabilized} by toppling unstable sites until every site has fewer chips than it has neighbors. If the initial number of chips is finite, this process terminates and the final arrangement of chips does not depend on the order in which unstable sites topple. 
	
	If you start with a large stack of chips at the origin in $\Z^d$ and stabilize, intricate, kaleidoscopic patterns appear. W. Pegden and C. Smart began the rigorous understanding of these patterns by showing that scaling limits of sandpiles exist and are Laplacians of solutions to elliptic obstacle problems \cite{pegden2013convergence}.  Their proof technique is flexible: it was first applied to the single source sandpile and it works for essentially any sandpile with a periodic initial configuration. However, their proof does not extend to random initial configurations. In this paper, as a first step towards understanding random sandpiles, we show, using a novel approach, that sandpiles with random initial states also have scaling limits.

	As a simple example, consider the following random sandpile on $\Z^d$. Start with $2d-1$ chips at
	each site in a ball of radius $n$.  Flip a fair coin for each $x$ in the ball, if the coin lands
	heads, add two extra chips at $x$. Once the initial sandpile has been set, stabilize. 

	 If you repeat this experiment for large $n$ and rescale, a non-random pattern emerges.  The pattern looks remarkably similar to the scaling limit of the single source sandpile -  compare Figures 1 and 2.  Our main result explains this similarity by proving that the scaling limit of the random sandpile is the Laplacian of the solution to an elliptic obstacle problem with two operators. One operator depends on the distribution of the randomness. The other operator is the exact same one appearing in the scaling limit of the single source sandpile. 

	More generally let $\eta: \Z^d \to \Z$ be stationary, ergodic, bounded, and satisfy $\E(\eta(0)) > 2 d-1$. Let $W \subset \R^d$ be a bounded Lipschitz domain. For each $n \in \N$, let $W_n = \Z^d \cap n W$ denote the finite difference approximation of $W$.  Initialize the sandpile according to $\eta$ in $W_n$ and set it to be 0 elsewhere. Then, stabilize, counting how many times each site topples with the {\it odometer function} $v_n : \Z^d \to \N$. Denote the stable sandpile by $s_n: \Z^d \to \Z$. Our main result is the following.

\begin{theorem} \label{thetheorem}
Almost surely, as $n \to \infty$, the rescaled functions $\bar{v}_n(x):= n^{-2} v_n([n x])$ converge uniformly to the unique solution of the elliptic obstacle problem,
\[
\bar{v} := \min \{ \bar{v} \in C(\R^d) : \bar{v} \geq 0,  \bar{F}_{\eta}(D^2 \bar{v}) \leq 0 \mbox{ in $W$,and } \bar{F}_0(D^2 \bar{v}) \leq 0 \mbox{ in $\R^d$}\},
\]
where $\bar{F}_\eta$ is a nonrandom, degenerate elliptic operator defined implicitly at the end 
of Section \ref{subsec:identify},
\[
\begin{split}
\bar{F}_0(M) :=  \inf \{ s \in \R | &\mbox{ there exists $u: \Z^d \to \Z$ such that for all $y \in \Z^d$,} \\
&\mbox{ $\Delta_{\Z^d} u(y) \leq 2 d -1$, and $u(y) \geq \frac{1}{2} y^T(M - s I) y$} \},
\end{split}
\]
and the differential inequalities are interpreted in the viscosity sense. 

 In turn, almost surely, the rescaled sandpiles, $\bar{s}_n(x) := s_n( [n x])$ converge weakly-* to a deterministic function $s \in L^\infty(\R^d)$ as $n \to \infty$. Moreover, the limit satisfies 
$\int_{\R^d} s = |W| \E( \eta(0))$, $s \leq 2 d-1$, $s = 0$ in $\R^d \backslash B_R^{}(W)$ for some constant $R > 0$ depending on $W$ and $\E(\eta(0))$, and weakly, 
\[
s = \begin{cases}
 \Delta \bar{v} + \E( \eta(0))  &\mbox{ in $W$ }  \\
  \Delta \bar{v}  &\mbox{ in $\R^d \backslash W$}.
  \end{cases}
\]

\end{theorem}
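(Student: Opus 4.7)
The plan is to follow the Armstrong-Smart stochastic homogenization program, adapted to the discrete degenerate-elliptic setting via the sandpile's least action principle: the odometer $v_n$ is the pointwise smallest nonnegative integer-valued function with $\Delta_{\Z^d} v_n + \eta \mathbf{1}_{W_n} \leq 2d-1$. This variational characterization replaces the classical maximum principle and is what makes a subadditive argument feasible.

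First I would define effective operators through a family of metric-type quantities on cubes. For a symmetric matrix $M$ and cube $Q_L \subset \Z^d$ of side $L$, consider a local analog of the formula defining $\bar{F}_0$, roughly
\[
\mu_L(M,\eta) := \inf\Bigl\{ s \in \R : \exists\, u:\Z^d \to \Z,\ \Delta_{\Z^d} u + \eta \leq 2d-1 \text{ in } Q_L,\ u(y) \geq \tfrac{1}{2} y^T(M-sI)y \text{ on } Q_L \Bigr\}.
\]
Translation invariance and gluing admissible $u$'s across disjoint subcubes make $\mu_L$ subadditive in $L$, and the Akcoglu-Krengel subadditive ergodic theorem, applied to the stationary ergodic field $\eta$, yields almost sure convergence $\mu_L(M,\eta) \to \bar{F}_\eta(M)$ for each rational $M$. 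Monotonicity in $M$ plus a priori Lipschitz estimates promote this to locally uniform convergence and identify $\bar{F}_\eta$ as a nonrandom, degenerate elliptic operator. Running the same construction with the source switched off recovers $\bar{F}_0$, which coincides with the Pegden-Smart operator.

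Next I would pass from this cell-problem convergence to convergence of the rescaled odometer by the Barles-Perthame half-relaxed limits method. A priori $L^\infty$ and Lipschitz bounds on $\bar{v}_n$ (inherited from single-source sandpile estimates) ensure that the envelopes $\bar{v}^*(x) := \limsup_{n\to\infty,\,y\to x} \bar{v}_n(y)$ and $\bar{v}_*(x) := \liminf_{n\to\infty,\,y\to x} \bar{v}_n(y)$ are continuous; touching with smooth quadratics and invoking the local convergence of $\mu_L$ shows that $\bar{v}^*$ is a viscosity subsolution and $\bar{v}_*$ a viscosity supersolution of the two-operator obstacle problem, and nonnegativity of $v_n$ passes to both. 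A comparison principle for the obstacle problem then forces $\bar{v}^* = \bar{v}_* = \bar{v}$ and yields uniform convergence. The sandpile conclusion follows from the pointwise identity $s_n = \eta \mathbf{1}_{W_n} + \Delta_{\Z^d} v_n$: the Birkhoff ergodic theorem gives weak-$*$ convergence of $\eta([n\,\cdot\,])\mathbf{1}_W$ to $\E(\eta(0))\mathbf{1}_W$, uniform convergence of $\bar{v}_n$ gives $\Delta_{\Z^d} v_n \to \Delta \bar{v}$ weakly, integration against constants yields the mass identity, and a priori radial obstacle bounds give the support property.

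The main obstacle, in my view, is bridging the Armstrong-Smart framework (which assumes uniform ellipticity and continuous equations) with the discrete, degenerate elliptic sandpile: the subadditive ergodic theorem requires $\mu_L$ to be finite, genuinely subadditive with controlled boundary layers, and well-approximated by integer-valued admissible competitors. Equally delicate is establishing a comparison principle for the two-operator obstacle problem, since $\bar{F}_0$ is known to be highly non-smooth, and correctly interfacing $\bar{F}_\eta$ inside $W$ with $\bar{F}_0$ on all of $\R^d$ across $\partial W$ in the viscosity sense requires care.
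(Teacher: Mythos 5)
Your high-level plan (subadditive ergodic theorem to build an effective operator, then viscosity-solution arguments and a comparison principle) matches the paper's program in spirit, but the two load-bearing steps are not carried by what you propose. First, your choice of cell quantity does not work with the Akcoglu--Krengel theorem. The quantity $\mu_L(M,\eta)$ you define is a scalar threshold (an infimum over $s$), not a set function bounded by $C|A|$ and subadditive over partitions, which is what the multiparameter subadditive ergodic theorem requires; and the claimed ``gluing'' of admissible competitors across disjoint subcubes fails because the discrete Laplacian constraint $\Delta_{\Z^d}u+\eta\le 2d-1$ couples neighboring subcubes at their interface, so patched functions need not be admissible. This is precisely why the paper does \emph{not} homogenize a local version of the Pegden--Smart formula for $\bar F_0$; instead it applies the ergodic theorem to the Monge--Amp\`ere mass $|\partial^+(w,A)|$ (resp.\ $|\partial^-|$) of quadratically perturbed legal and stabilizing toppling functions, which is genuinely additive over partitions (Lemma \ref{MA-rep}) and bounded by $C|A|$ (Lemma \ref{mu-is-bounded}). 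Note also that your cell problem is one-sided: it only encodes stabilizing (supersolution-type) competitors, whereas the effective operator must be pinned by \emph{two} quantities, $\mu$ (legal topplings/subsolutions) and $\mu^*$ (stabilizing topplings/supersolutions), whose simultaneous vanishing at $l_M$ defines $\bar F_\eta(M)$ and is what later yields comparison via the ABP inequality.

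Second, you invoke ``a comparison principle for the two-operator obstacle problem'' as if it were available, but establishing it is the central difficulty of the paper, since the discrete sandpile PDE is not uniformly elliptic and the Armstrong--Smart machinery cannot be cited off the shelf. The paper proves it as Lemma \ref{flat} ($\mu(l_M,M)=\mu^*(l_M,M)=0$) by combining Lemma \ref{strict-convexity} (strict concavity of near-maximizers, itself requiring a new regularity argument for $\mu$), the lower bound $v_n\ge 1$ on most of the domain (Lemma \ref{lowerbound}, via Green's function estimates and the high-density assumption), Proposition \ref{recurrent} to get a recurrent configuration, Dhar's burning algorithm (Proposition \ref{burning}) to compare a subsolution against a supersolution on the lattice, and the discrete Harnack inequality to reach a contradiction. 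None of this sandpile-specific structure appears in your outline, so the step from ``sub/supersolution envelopes'' to uniqueness of the limit is unsupported. (Relatedly, the paper does not use Barles--Perthame half-relaxed limits: it compares two arbitrary subsequential limits directly at a point of twice differentiability supplied by the Pegden--Smart regularity lemma, using the ABP-type bound of Lemma \ref{ABP} together with the vanishing of $\mu,\mu^*$; that is a viable alternative framing only once the vanishing statement is actually proved.) The concluding derivation of the weak-$*$ statements for $\bar s_n$ from $s_n=\eta\mathbf{1}_{W_n}+\Delta_{\Z^d}v_n$ is fine.
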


\begin{figure}
\begin{centering}
  \includegraphics[width=0.5 \textwidth]{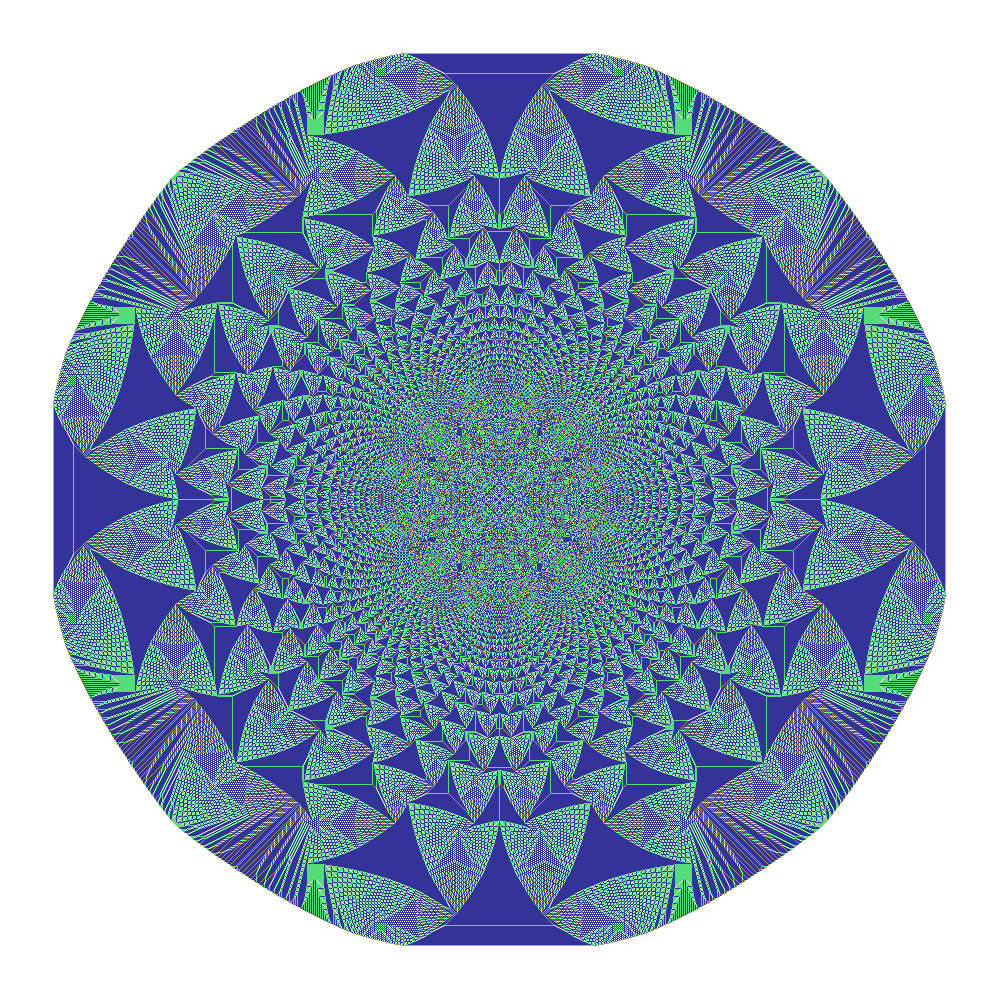}
 \caption{The single source sandpile: start with $10^6$ chips at the origin and stabilize. Sites with 0,1,2, and 3 chips are represented by white, brown, green, and blue respectively.  }
\end{centering}
\end{figure}
	The main challenge in proving the above theorem is that there is no inherent linear or subadditive quantity governing the behavior of the sandpile. The Abelian sandpile is nonlocal: one unstable pile can cause a far-reaching avalanche of topplings. This difficulty is the same one faced by those studying stochastic homogenization of fully nonlinear elliptic PDEs.  Fortunately, since the sandpile can be expressed as the solution to a nonlinear discrete PDE, we can use those same methods here. To be specific, we import the stochastic homogenization tools introduced by S. Armstrong and C. Smart in \cite{armstrong2014quantitative}. 
	
	The tools, however, don't work out of the box. The sandpile doesn't directly fit into the general framework of fully nonlinear elliptic PDEs; and so appropriate sandpile substitutes must be identified in order to run the program. 
Further, a main technical hurdle to overcome is the lack of uniform ellipticity. This is done with new arguments which utilize the regularity theory of the discrete Laplace operator
as well as a comparison principle hidden in sandpile dynamics.

\subsection{Outline of the paper}
In Section \ref{prelims}, we precisely state the assumptions of our result. Then, in Section \ref{sec:sandpile}, we recall some necessary properties of the Abelian sandpile. We provide a brief overview of the main ideas of the proof 
in Section \ref{sec:overview}. Next, in Section \ref{sec:monotone_quantity}, we define a subadditive quantity, $\mu$, which will implicitly control the random sandpile. Through an appropriate perturbation of $\mu$, we identify $\bar{F}_\eta$ in Section \ref{sec:identify}. 
In Section \ref{sec:proof_of_theorem}, we prove the main result by establishing compactness of the odometer 
and showing that $\bar{F}_{\eta}$ has a comparison principle.  Then, in Section \ref{sec:divisible_sandpile} we show a simpler proof of convergence of a related model, the random {\it divisible} sandpile, introduced by L. Levine and Y. Peres  \cite{levine2009strong,levine2010scaling}. In stark contrast to the Abelian sandpile, the limit of the random divisible sandpile is exactly the limit of the averaged divisible sandpile. We end with some generalizations and open questions in Section \ref{sec:conclusion}.


\section{Preliminaries} \label{prelims}

\subsection{Notation and Conventions} 
The constant $d \in \N$ will refer to dimension.
We denote $\symm^d$ as the set of symmetric $d \times d$ matrices with real entries. If $M \in \symm^d$, we write $M \geq 0$ if $M$ has nonnegative eigenvalues.  $|M|_2$ will also refer to the largest in magnitude eigenvalue of $M$. 
 For a vector $x \in \R^d$, $|x|_\infty$ denotes the maximum norm and $|x|_2$ the 2-norm. We sometimes omit the subscript, in which case $|x|$ refers to the 2-norm. We also write $q_M(x) : =  \frac{1}{2} x^T M x$ and $q_l(x) :=  \frac{1}{2d} l |x|^2$. For both functions and vectors, inequalities, minima, and maxima are to be interpreted as pointwise. We write $y \sim x$ when $(y - x) \in \Z$ and $|y-x| = 1$. 
For a subset $A \subset \Z^d$, $d \geq 1$, denote its discrete boundary by 
\[
\partial A = \{ y \in \Z^d \backslash A : \exists x \in A: y \sim x \}. 
\]
and its discrete closure by 
\[
\bar{A} = A \cup \partial A. 
\]
The diameter of $A$ is 
\[
\diam(A) = \max\{ |x-y|_2 : x,y \in A\}.
\]
For $x \in \Z^d$,
\[
Q_n(x) = \{ y \in \Z^d : |x-y|_\infty  < n \}, 
\]
is the cube of radius $n$ centered at $x$:  
and 
\[
B_n(x) = \{y \in \Z^d: |x - y|_2 < n\}, 
\]
is the ball of radius $n$ centered at $x$. For short, $B_n := B_n(0)$, $Q_n := Q_n(0)$.  We will also use $Q_n$ and $B_n$
to refer to cubes and balls on $\R^d$.

We similarly overload notation so that for $A \subset \Z^d$, $|A|$ refers to the number of points in $A$ and for measurable $L \subset \R^d$, $|L|$ is the Lebesgue measure of $L$. For $f: \Z^d \to \R$, we denote its discrete Laplacian by  
\[
\Delta_{\Z^d} f = \sum_{y \sim x} ( f(y) - f(x)), 
\]
and its discrete second-differences by
\[
\Delta_i f = f(x+e_i) + f(x-e_i) - 2 f(x), 
\]
where $\{ \pm e_i\}$  are the $2 d$ coordinate directions in $\Z^d$.   $\Delta$ will refer to the Laplace operator on the continuum.  The convex hull of a set of points $A$ will be denoted 
\[
\conv(A) = \{y \in \R^d:  y = \sum_{i=1}^{d+1} \lambda_i x_i \mbox{ for $x_i \in A$, $\sum_{i=1}^{d+1} \lambda_i  = 1$, $\lambda_i \in [0,1]$}\}.
\]
Throughout the proofs the constant $C$ will denote a positive constant which may change from line to line. When needed, explicit dependence of 
$C$ on other constants will be denoted by, for example, $C_d$.


\subsection{Assumptions}  \label{subsec:assumptions}
We consider the sandpile on the integer lattice $\Z^d$ for $d \geq 2$, (although this assumption is not an essential one). 
Let $\Omega$ denote the set of all bounded functions $\eta: \Z^d \to \Z$.  Endow $\Omega$ with the $\sigma$-algebra $\mathcal{F}$ generated by $\{ \eta \to \eta(x) : x \in \Z^d\}$. We model the randomness by a probability measure $\mathbf{P}$ on $(\Omega, \mathcal{F})$ with the following properties. First,  there exists $\eta_{\min}, \eta_{\max} \in \Z$ so that for every $x \in \Z^d$, 
\begin{equation}
\mbox{Uniform Boundedness: }  \mathbf{P}\left[ \eta_{\min} \leq \eta(x) \leq \eta_{\max} \right] = 1. 
\end{equation} 
Note this may be replaced by an appropriate concentration assumption. We further assume that $\mathbf{P}$ is \mbox{stationary} and \mbox{ergodic}. Denote the action of translation by $T: \Z^d \times \Omega \to \Omega$, 
\[
T(y, \eta)(z) = (T_y \eta)(z) := \eta(y + z),
\]
and extend this to $\mathcal{F}$ by defining  $T_y E := \{ T_y \eta : \eta \in E\}$. Stationarity and ergodicity is then
\begin{equation}
\mbox{Stationarity: for all $E \in \mathcal{F}, y \in \Z^d$: } \mathbf{P}(T_y E) = \mathbf{P}(E),
\end{equation}
\begin{equation}
\mbox{Ergodic:  $E = \cap_{y \in \Z^d} T_y E$ implies that $\mathbf{P}(E) \in \{0,1\}$ } . 
\end{equation}
Lastly, we assume that the density of sand in the initial sandpile is high:
\begin{equation}
\mbox{ High density: }  \E(\eta(0)) > 2 d - 1. 
\end{equation}
High density is a natural, weak assumption which forces interesting behavior to occur. See Section \ref{sec:conclusion} for further discussion of this assumption. A concrete example to keep in mind 
is when $\{\eta(x)\}_{x \in \Z^d}$ are independent and identically distributed with sufficiently large expectation.

\section{Sandpiles}\label{sec:sandpile}
The results in this section are reformulations of fundamental facts about the Abelian sandpile. See, for example, \cite{corry2018divisors,klivans2018mathematics,jarai2018sandpile,holroyd2008chip, redig2005mathematical}. Fix a bounded, connected $A \subset \Z^d$ and a starting sandpile $\eta: A \to \Z$.  We call positive integer-valued functions on $\bar{A}$,  {\it toppling functions}.  Recall that a toppling function $u$ is {\it legal} for $\eta$ if it can be decomposed into a sequence of topplings so that only sites $x$ where $\eta(x) \geq 2 d$ are toppled. More precisely, $u$ is legal for $\eta$ if we can express for some $n \geq 0 $,
\[
u = u_0 +  u_1 + \cdots + u_n, 
\]
where $u_0 = 0$ and for $i \geq 1$, $u_i(x) = 0$ for all $x \in A$ except for one $\hat{x}_i \in A$ for which $u_i(\hat{x}_i) = 1$
and 
\[
\left( \Delta_{\Z^d} (u_1 + \cdots +  u_{i-1})  + \eta \right)(\hat{x}_i) \geq 2 d.
\]
When $\eta \leq 2d -1$, the only legal toppling function is $u = u_0 = 0$. An important observation is that any legal toppling function satisfies $\Delta_{\Z^d} u + \eta \geq \min(0, \eta)$ but this inequality does not imply $u$ is legal. A toppling function $v$ is {\it stabilizing} for $\eta$ in $A$ if $\Delta_{\Z^d} v + \eta \leq 2 d-1$ in $A$. 

Denote the set of {\it locally legal} topplings for $\eta$ as
\begin{align*}
\mathcal{L}(\eta, A) = &\{ u: \bar{A} \to \N :  \mbox{ there exists $w:\bar{A} \to \N$ and $\hat{u}: \bar{A} \to \N$} \\
&\mbox{where $u = w + \hat{u}$, $w(x) = 0$ for $x \in A$, $\hat{u}(x) = 0$ for $x \in \partial A$}  \\
&\mbox{and $\hat{u}$ is legal for $\Delta_{\Z^d} w + \eta$ in $A$}\}
\end{align*}
and the set of stabilizing topplings for $\eta$ in $A$ as 
\[
\mathcal{S}(\eta, A)  = \{ v: \bar{A} \to \N:   \Delta_{\Z^d} v(x) + \eta(x)  \leq 2 d-1 \mbox{ for $x \in A$} \}.
\]
It is important to note that these sets only enforce their constraints in $A$, they may include arbitrary topplings on $\partial A$.

The {\it odometer} function, $v: \Z^d \to \N$, counts the number of times each site in $\eta$ topples when stabilizing. Here we distinguish between two common scenarios. 
In the first scenario, once a grain leaves $A$, it falls off and disappears. We call this the {\it open boundary} condition. In this case $v = 0$ on $\Z^d \backslash A$. In the second scenario, 
grains continue to spread and topplings can occur outside of $A$. This is the {\it free boundary} condition. The sandpile we consider in our main theorem has the free 
boundary condition. However, as we will discuss in Section \ref{sec:conclusion}, our methods also apply to other sandpiles including those with open boundaries. In this section, 
we state results for sandpiles with open boundaries. 

First, we recall the least-action principle for sandpiles \cite{fey2010growth} and rephrase it in a way amenable to the methods of this paper. We will refer to this as the {\it discrete sandpile PDE}. 
\begin{prop}\label{prop:least_action}
The odometer function $v$ uniquely solves each of the following problems. 
\begin{enumerate}
\item  Longest legal toppling,
\[
v = \sup\{ w: \bar{A} \to \N : w \in \mathcal{L}(\eta, A), \mbox{ $w = 0$  on $\partial A$}\}.
\]
\item Shortest stabilizing toppling, 
\[
v = \inf\{ u: \bar{A} \to \N : u \in \mathcal{S}(\eta, A),  \mbox{ $u = 0$  on $\partial A$}\}. 
\]
\item Stabilizing, legal toppling, 
\[
 v \in \mathcal{L}(\eta, A) \cap \mathcal{S}(\eta, A) \mbox{ and $v = 0$ on $\partial A$}.
 \]
\end{enumerate}
\end{prop}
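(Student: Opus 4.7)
The three statements in the proposition are all consequences of a single \emph{comparison lemma}: if $u \in \mathcal{L}(\eta, A)$ and $v \in \mathcal{S}(\eta, A)$ with $u \leq v$ on $\partial A$, then $u \leq v$ on $\bar{A}$. My plan is to prove this comparison principle first and then derive (1)--(3) as routine consequences.

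To prove the comparison lemma, I would write $u = w + \hat u$ as in the definition of $\mathcal{L}(\eta,A)$ and decompose $\hat u = u_1 + \cdots + u_n$ into single-site topplings legal for $\Delta_{\Z^d} w + \eta$ in $A$. Set partial sums $s_i := w + u_1 + \cdots + u_i$ and argue by induction that $s_i \leq v$ on $\bar A$ for every $i$. The base case $s_0 = w \leq v$ holds on $A$ because $w \equiv 0$ there and $v \geq 0$, and it holds on $\partial A$ by hypothesis. For the induction step, suppose $s_{i-1} \leq v$ and the $i$-th toppling is at site $\hat{x}_i \in A$; if the conclusion failed, we would have $s_{i-1}(\hat x_i) = v(\hat x_i)$ while $s_{i-1}(y) \leq v(y)$ for all neighbors $y \sim \hat x_i$. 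Using $\Delta_{\Z^d} f(x) = \sum_{y\sim x}(f(y) - f(x))$, this gives $\Delta_{\Z^d} s_{i-1}(\hat x_i) \leq \Delta_{\Z^d} v(\hat x_i)$. Combined with legality at $\hat x_i$ (which forces $\Delta_{\Z^d} s_{i-1}(\hat x_i) + \eta(\hat x_i) \geq 2d$) and stabilization at $\hat x_i$ (which gives $\Delta_{\Z^d} v(\hat x_i) + \eta(\hat x_i) \leq 2d-1$), we reach the contradiction $2d \leq 2d-1$.

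With the lemma in hand, the rest is bookkeeping. For existence of a $v$ satisfying (3) with $v = 0$ on $\partial A$, I would simply run the sandpile algorithm (topple any unstable site in $A$, remove chips that land in $\partial A$) and invoke the standard Abelian property to conclude that termination occurs and produces a function lying in $\mathcal{L}(\eta, A) \cap \mathcal{S}(\eta, A)$ that vanishes on $\partial A$. Uniqueness in (3) follows from two applications of the comparison lemma: if $v, v'$ both satisfy (3), then $v \leq v'$ (take $u = v \in \mathcal L$, compare to $v' \in \mathcal S$, boundary data matching) and symmetrically $v' \leq v$. For (1), any $w \in \mathcal L(\eta,A)$ with $w = 0$ on $\partial A$ obeys $w \leq v$ by the lemma applied with $v \in \mathcal S$, so the supremum is attained at $v$ itself. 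For (2), any $u \in \mathcal S(\eta,A)$ with $u = 0$ on $\partial A$ obeys $u \geq v$ by the lemma applied with $v \in \mathcal L$, so $v$ realizes the infimum.

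The main potential pitfall is bookkeeping around the boundary: the definition of $\mathcal{L}(\eta,A)$ permits an arbitrary nonnegative increment $w$ supported on $\partial A$, while $\mathcal{S}(\eta,A)$ imposes no constraint on $\partial A$ at all, so it is important that the comparison lemma be stated with explicit boundary data rather than assuming $u,v$ vanish there. Once the lemma is phrased this way, the zero boundary condition in the three characterizations is what couples them together, and no further input beyond the Abelian property and the elementary monotonicity of $\Delta_{\Z^d}$ is needed.
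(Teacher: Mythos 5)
Your argument is correct and is essentially the canonical one: the comparison lemma you prove by induction along the legal toppling sequence is precisely the least-action principle, and the paper itself gives no proof of Proposition~\ref{prop:least_action}, recalling it from \cite{fey2010growth}, where the argument is the same induction. The only loose end in your write-up --- termination of stabilization, which does not follow from the Abelian property alone but from finiteness of chips on a bounded domain with dissipative boundary --- is presupposed by the paper's definition of the odometer, so nothing essential is missing.
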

The reader should view locally legal toppling functions as subsolutions and stabilizing toppling functions as supersolutions.

We will also use the following consequence of the Abelian property:  any locally legal, stabilizing toppling function 
can be decomposed into the usual odometer function for $\eta$ and an odometer function
which keeps track of topplings originating from the boundary. 
\begin{prop} \label{Abelian}
	If $v \in \mathcal{L}(\eta,A ) \cap \mathcal{S}(\eta, A)$ and $v = f \geq 0$ on $\partial A$, then $v$ can be decomposed into
	\[
	v = v_1 + v_2, 
	\]
	where
	\[
	\begin{cases}
	v_1 \in \mathcal{L}(\eta) \cap \mathcal{S}(\eta)  &\mbox{ on $A$} \\
	v_1 = 0 & \mbox{on $\partial A$} 
	\end{cases}
	\]
	and
	\[
	\begin{cases}
	v_2 \in \mathcal{L}(\eta + \Delta_{\Z^d} v_1) \cap \mathcal{S}(\eta + \Delta_{\Z^d} v_1)  &\mbox{ on $A$} \\
	v _2= f & \mbox{on $\partial A$} 
	\end{cases}
	\]
\end{prop}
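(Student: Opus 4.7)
The plan is to take $v_1$ to be the standard (zero-boundary) odometer for $\eta$ on $A$ from Proposition \ref{prop:least_action}, set $v_2 := v - v_1$, and verify the required properties. Throughout, I will use the decomposition $v = w + \hat u$ with $w|_A = 0$, $w|_{\partial A} = f$, $\hat u|_{\partial A} = 0$, and $\hat u$ legal for the augmented sandpile $\eta' := \eta + \Delta_{\Z^d} w$ on $A$, which is the (unique) decomposition implicit in $v \in \mathcal{L}(\eta, A)$.

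The first task is to show $v_2 \geq 0$. Since $w \geq 0$ with $w|_A = 0$, we have $\Delta_{\Z^d} w \geq 0$ on $A$, so $\eta' \geq \eta$ pointwise on $A$. The identity $\Delta_{\Z^d} \hat u + \eta' = \Delta_{\Z^d} v + \eta \leq 2d-1$ on $A$ shows that $\hat u$ is simultaneously legal and stabilizing for $\eta'$ with zero boundary, so by Proposition \ref{prop:least_action}(3) it is the zero-boundary odometer of $\eta'$. Monotonicity of the odometer in the initial configuration (an immediate consequence of the inf--sup characterizations in Proposition \ref{prop:least_action}, since any legal toppling for $\eta$ is legal for $\eta' \geq \eta$) then gives $v_1 \leq \hat u$ on $\bar A$, hence $v_2 \geq 0$. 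The boundary identity $v_2|_{\partial A} = f - 0 = f$ is immediate, and the stabilizing property $v_2 \in \mathcal{S}(\eta + \Delta_{\Z^d} v_1, A)$ is the algebraic identity $\Delta_{\Z^d} v_2 + (\eta + \Delta_{\Z^d} v_1) = \Delta_{\Z^d} v + \eta \leq 2d-1$ on $A$.

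The remaining step---and the main obstacle---is to show $v_2 \in \mathcal{L}(\eta + \Delta_{\Z^d} v_1, A)$. I would use the decomposition $v_2 = w + (\hat u - v_1)$ with the same boundary part $w$, so the task reduces to checking that $\hat u - v_1$ is legal for $\eta' + \Delta_{\Z^d} v_1$ on $A$ with zero boundary. This is where the Abelian property is essential: since $v_1$ is legal for $\eta$ and $\eta' \geq \eta$, $v_1$ is also legal for $\eta'$, and we have already shown $v_1 \leq \hat u$. The Diaconis--Fulton Abelian property then lets me reorder any toppling sequence realizing $\hat u$ as legal for $\eta'$ so that the $v_1$-topplings occur first; after performing them the configuration is $\eta' + \Delta_{\Z^d} v_1$, and the remaining topplings---summing to $\hat u - v_1$---constitute a legal sequence for this intermediate state. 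The delicate point is justifying the reordering, which rests on the two standard facts that any permutation of a legal toppling sequence is legal and that a toppling legal at a given configuration remains legal at any pointwise larger one; both are part of the foundational material cited at the start of Section \ref{sec:sandpile}.
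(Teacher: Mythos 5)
Your construction is sound and is essentially the argument the paper leaves to the standard references: the paper states this proposition without proof as a consequence of the Abelian (Diaconis--Fulton) property, and your route --- take $v_1$ to be the zero-boundary odometer of $\eta$ on $A$ from Proposition \ref{prop:least_action}, set $v_2 = v - v_1$, and verify the three required properties --- is the natural way to make that precise. The verifications of the boundary values, of $v_2 \in \mathcal{S}(\eta + \Delta_{\Z^d} v_1, A)$, and of $v_2 \geq 0$ (via the observation that $\hat u$ is the zero-boundary odometer of $\eta' = \eta + \Delta_{\Z^d} w$ and that odometers are monotone in the configuration, so $v_1 \leq \hat u$) are all correct.

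One supporting claim in your final step is false as stated: it is not true that any permutation of a legal toppling sequence is legal (a site may only become unstable after a neighbor topples, so it cannot be moved to the front), so you cannot literally ``reorder'' a legal sequence realizing $\hat u$ to put the $v_1$-topplings first. The conclusion you want --- that $\hat u - v_1$ is legal for $\eta' + \Delta_{\Z^d} v_1$ --- is nevertheless a standard consequence of the Abelian property, and the correct justification runs in the opposite direction: since $v_1$ is legal for $\eta$ and hence for $\eta' \geq \eta$, choose a legal toppling sequence for $\eta'$ realizing $v_1$ and then extend it by toppling unstable sites of $A$ until the configuration is stable in $A$. This extended sequence is legal and stabilizing with zero boundary topplings, so by the uniqueness in Proposition \ref{prop:least_action}(3) (the Abelian property) its total toppling function is $\hat u$; the tail of the sequence therefore sums to $\hat u - v_1$ and is by construction legal for the intermediate configuration $\eta' + \Delta_{\Z^d} v_1$, which is exactly what you need for $v_2 = w + (\hat u - v_1) \in \mathcal{L}(\eta + \Delta_{\Z^d} v_1, A)$. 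With that one substitution your proof is complete.
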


A certain class of sandpiles, known as {\it recurrent} sandpiles will help in the sequel. We say $\eta$ is recurrent if we can find $s: A \to \N$ and $u \in \mathcal{L}(s + 2 d - 1, A)$ with $u = 0$ on $\partial A$ so that $\eta = 2 d- 1 + s + \Delta_{\Z^d} u$ in $A$. In other words, $\eta$ is recurrent if we can reach $\eta$ by starting with $2d-1$ chips at every site in $A$, adding chips at some sites in $A$, and then toppling some sites legally. Also we call $\eta$  {\it stable} in $A$ if $\eta \leq 2 d-1$ in $A$.  

A useful consequence of Dhar's burning algorithm \cite{dhar1990self} will aid in controlling topplings in stable, recurrent sandpiles.  Recall that the burning algorithm provides a recipe for checking if a stable sandpile is recurrent:
topple the boundary of a sandpile once, if the sandpile is recurrent, each inner site will topple exactly once when stabilizing. More generally, topple sites along $\partial A$ and then legally stabilize $s$ in $A$. If $s$ is a stable sandpile, no site in $A$ will topple more times than a boundary site has toppled. And, if $s$ is a recurrent sandpile, every site in $A$ will topple at least as many times as some boundary site. This leads to both a maximum principle and a comparison principle for the sandpile.

\begin{prop}\label{burning}
For $f: \partial A \to \N$, a sandpile $s: A \to \Z$, let $v$ solve
\[
\begin{cases}
v \in \mathcal{L}(s) \cap \mathcal{S}(s)  &\mbox{ on $A$} \\
v = f & \mbox{on $\partial A$} 
\end{cases}
\]
If $s$ is stable, then
\begin{equation} \label{eq:stable}
\sup_{x \in A} v(x) \leq \sup_{y \in \partial A} f(y). 
\end{equation}
If $s$ is recurrent, then
\begin{equation} \label{eq:rec}
\inf_{x \in A} v(x) \geq \inf_{y \in \partial A} f(y). 
\end{equation}
In particular, when $s$ is stable and recurrent, we have the following comparison principle: 
let $u$ solve
\[
\begin{cases}
u \in \mathcal{L}(s) \cap \mathcal{S}(s)  &\mbox{ on $A$} \\
u = f' & \mbox{on $\partial A$}, 
\end{cases}
\]
for some $f': \partial A \to \N$.
Then, for any integer-valued harmonic functions $g, h : \bar{A} \to \Z$ with $\Delta_{\Z^d} g = \Delta_{\Z^d} h = 0$ in $A$, 
\begin{equation} \label{eq:comp}
\inf_{x \in A} ( (u + g) - (v + h) )(x) \geq \inf_{y \in \partial A} ( (f'+g) - (f + h))(y). 
\end{equation}
\end{prop}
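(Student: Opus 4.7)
The three parts are consequences of Dhar's burning algorithm, as the preamble foreshadows. My plan is to prove (1) by a direct counting argument, (2) by induction on the minimum boundary value using the classical burning test as the base case, and (3) by combining (1) and (2) through an auxiliary odometer, together with a harmonic shift equivariance to handle the functions $g,h$.

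For (1), fix a toppling order realizing $v$ and set $M = \sup_{\partial A} f$. Suppose by contradiction some interior site topples more than $M$ times, and let $x_0 \in A$ be the first such site at the moment of its $(M+1)$-th toppling. At that moment every neighbor of $x_0$---interior by the choice of $x_0$, or on $\partial A$ because $v = f \leq M$ there---has toppled at most $M$ times. The chip count at $x_0$ is therefore at most $s(x_0) + 2dM - 2dM = s(x_0) \leq 2d - 1 < 2d$ since $s$ is stable, contradicting that $x_0$ is about to topple.

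For (2), I induct on $m = \inf_{\partial A} f$. The base case $m = 0$ is immediate from $v \geq 0$. For the inductive step, by the Abelian property the odometer can be computed by first toppling each boundary site $f(y) - 1$ times and stabilizing (interior odometer $v_1$), then toppling each boundary site once more and stabilizing (interior odometer $v_2$), so $v = v_1 + v_2$ in $A$. The inductive hypothesis applied to the first round gives $v_1 \geq m - 1$. The state after the first round is stable by construction, and recurrent because recurrence is preserved under the operation of adding chips and stabilizing; the classical burning test then gives $v_2 \geq 1$. Combining, $v \geq m$.

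For (3), I first treat $g \equiv h \equiv 0$. Let $\bar f := \max(f, f')$ on $\partial A$, with corresponding odometer $\bar u$. By the Abelian property, $\bar u - v$ is the odometer of the stable and recurrent state $s + \Delta_{\Z^d} v$ with boundary $\bar f - f \geq 0$, and $\bar u - u$ is the odometer of $s + \Delta_{\Z^d} u$ with boundary $\bar f - f' \geq 0$. Applying (2) and (1) respectively yields
\[
(u - v)(x) = (\bar u - v)(x) - (\bar u - u)(x) \geq \inf_{\partial A}(\bar f - f) - \sup_{\partial A}(\bar f - f'),
\]
and a short case analysis on the sign of $f' - f$ shows the right-hand side equals $\inf_{\partial A}(f' - f)$. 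To handle general harmonic $g, h$, I would use the harmonic shift equivariance $u_{f + H|_{\partial A}} = u_f + H$, valid whenever $H$ is a non-negative integer harmonic function on $\bar A$; this follows from the least-action principle (Proposition~\ref{prop:least_action}) applied in both directions. After shifting $g$ and $h$ by a large constant $C$ to make them non-negative (which does not affect $g - h$), the claim reduces to the $g = h = 0$ case applied to the shifted boundaries $f' + g|_{\partial A} + C$ and $f + h|_{\partial A} + C$. The main subtlety I anticipate is verifying the harmonic shift equivariance: one must check that both $u_f + H$ and $u_{f + H|_{\partial A}} - H$ remain non-negative in $A$ so that the shortest-stabilizing characterization applies in both directions.
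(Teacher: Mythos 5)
Your parts (1) and (2) are correct, and your treatment of the case $g\equiv h\equiv 0$ of \eqref{eq:comp} via the auxiliary odometer with boundary data $\bar f=\max(f,f')$ is a genuinely different (and valid) route from the paper's. The gap is in the final reduction of general harmonic $g,h$ to that case. The ``harmonic shift equivariance'' $u_{f+H|_{\partial A}}=u_f+H$ does \emph{not} follow from the least-action principle applied in both directions: least action gives $u_{f+H|_{\partial A}}\leq u_f+H$, but the reverse inequality needs precisely the unverified fact you flag, namely $u_{f+H|_{\partial A}}\geq H$ in $A$, and this is a recurrence-dependent statement rather than a technical check. It is false for a general stable background: take $A$ to be two adjacent sites of $\Z^2$ with $s\equiv 0$ (stable, not recurrent), $f\equiv 0$ and $H\equiv 1$; each boundary site topples once, so each interior site receives only $3<2d$ chips and the odometer is $0$, not $u_f+H=1$. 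Moreover your part (2) only yields the constant lower bound $\inf_{\partial A}(f+H)$, which cannot give the pointwise bound $u\geq H$ for nonconstant harmonic $H$, so nothing you have proved closes this step.

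Two ways to repair it. The paper never moves $g,h$ into the boundary data: it sets $w=((u+g)-(v+h))-\inf_{\bar A}((u+g)-(v+h))\geq 0$ and observes that, since $g,h$ are harmonic, $\Delta_{\Z^d}w+\eta_v=\eta_u\leq 2d-1$ in $A$ with $\eta_v:=s+\Delta_{\Z^d}v$ stable and recurrent; hence $w$ is a nonnegative stabilizing toppling for $\eta_v$, so by Propositions \ref{prop:least_action} and \ref{Abelian} it dominates the legal odometer $\hat w$ with the same boundary values, and \eqref{eq:rec} applied to $\hat w$ gives the minimum principle for $w$, which is \eqref{eq:comp}. Alternatively, you can keep your route by actually proving the refined bound $u_{f+H|_{\partial A}}\geq H$ in $A$ for stable recurrent $s$ and integer harmonic $H$: let $m=\max_A(H-u)\geq 1$ and let $S$ be the set where the maximum is attained; for $x\in S$ the final (stable) chip count satisfies $s(x)+\Delta_{\Z^d}u(x)\geq s(x)+\Delta_{\Z^d}H(x)+\bigl(2d-|\{y\sim x:y\in S\}|\bigr)$, and since $\Delta_{\Z^d}H=0$ stability forces $s(x)\leq |\{y\sim x:y\in S\}|-1$ for all $x\in S$, so $S$ is a forbidden subconfiguration, contradicting recurrence. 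Either way, an additional burning-type argument of this kind is required; as written, your proposal is missing it.
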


\begin{proof}
	The maximum principle, \eqref{eq:stable} and \eqref{eq:rec}, follows from the proof of the burning algorithm, see Theorem 2.6.3 in \cite{klivans2018mathematics} or Theorem 7.5 in \cite{corry2018divisors}.  We show how these imply \eqref{eq:comp}.  By definition, $\eta_u := \Delta_{\Z^d} u + s$ and $\eta_v := \Delta_{\Z^d} v + s$ 
	are both stable and recurrent. Let 
	\[
	w(x) =  ( (u + g) - (v + h) )(x) - \inf_{y \in \bar{A}} ( (u + g) - (v + h) )(y), 
	\]
	so that $w \geq 0$ and in $A$, 
	\[
	\Delta_{\Z^d} w + \eta_v = \Delta_{\Z^d} u + s = \eta_u. 
	\]
	Let $\hat{w}$ solve 
	\[
 	\hat{w} \in \mathcal{L}(\eta_v, A) \cap \mathcal{S}(\eta_v, A) \mbox{ and $\hat{w} = w$ on $\partial A$}.
	\]
	By Propositions \ref{Abelian}, \ref{prop:least_action}, and then \eqref{eq:rec}, 
	\[
	\inf_{x \in A} w(x) \geq \inf_{x \in A} \hat{w}(x) \geq \inf_{y \in \partial A} \hat{w}(y) = \inf_{y \in \partial A} w(y).
	\]
\end{proof}

We conclude the section by noting a useful alternative characterization of recurrent sandpiles.  If each site in $\eta$
has toppled at least once, then what remains is recurrent. 
\begin{prop}\label{recurrent}
Let $A \subset W$ be connected subsets of $\Z^d$.  If $w \in \mathcal{L}(\eta, W)$ with $w \geq 1$ in $A$, then $\Delta_{\Z^d} w+ \eta$ is recurrent in $A$. 
\end{prop}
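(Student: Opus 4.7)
My plan is to construct the pair $(s,u)$ required by the definition of recurrence directly from $w$. The first step is to show that the restriction
\[
w|_A(x) := \begin{cases} w(x), & x \in A, \\ 0, & x \in \partial A, \end{cases}
\]
is locally legal in $A$ for the augmented starting sandpile
\[
\tilde{\eta}(x) := \eta(x) + \sum_{y \sim x,\, y \in \partial A} w(y), \qquad x \in A,
\]
which absorbs the chips fed into $A$ by the $\partial A$-topplings of $w$. I would prove this by passing to the $A$-subsequence of any legal ordering for $w$ in $W$ and checking step by step that the chip count at each $A$-toppling in the restricted dynamics is at least as large as in the original, since any $\partial A$-toppling not yet performed in the original has already been pre-credited to $\tilde\eta$.

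The natural candidates $u := w|_A$ and $s := \tilde\eta - (2d-1)$ then satisfy
\[
(2d - 1) + s + \Delta_{\Z^d} u = \tilde\eta + \Delta_{\Z^d}(w|_A) = \eta + \Delta_{\Z^d} w = \eta' \text{ on } A,
\]
and $u$ is legal for $(2d{-}1)+s = \tilde\eta$ by the first step. This closes the argument when $\tilde\eta \geq 2d-1$ on $A$, but the last-toppling legality of $w$ only forces the weaker bound $\eta' \geq 0$ on $A$, so $\tilde\eta$ may drop below $2d-1$ at interior sites whose legality was supplied by chips arriving from $A$-neighbors rather than $\partial A$.

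To rescue the deficient case I would introduce an integer-valued superharmonic correction $v : \bar{A} \to \N$ with $v = 0$ on $\partial A$ and $-\Delta_{\Z^d} v(x) \geq \max(0,\,(2d-1) - \tilde\eta(x))$ on $A$; existence follows from a standard discrete Poisson/obstacle construction, for instance a suitable multiple of a fixed nonnegative integer function on $\bar{A}$ with vanishing boundary trace and Laplacian bounded above by $-1$ on $A$. Setting $u := w|_A + v$ and $s := \tilde\eta - (2d-1) - \Delta_{\Z^d} v$ gives $s \geq 0$ pointwise and preserves $(2d-1) + s + \Delta_{\Z^d} u = \eta'$ on $A$.

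The technically subtle step — and what I expect to be the main obstacle — is verifying legality of the augmented $u = w|_A + v$ for the augmented starting configuration $(2d-1) + s = \tilde\eta - \Delta_{\Z^d} v$ in $A$. The plan is to topple $w|_A$ first, which remains legal because $-\Delta_{\Z^d} v \geq 0$ only adds chips on top of $\tilde\eta$, and then to show that $v$ is legal from the intermediate state $\eta' - \Delta_{\Z^d} v$. Since $v$ is peaked precisely at the deficient sites and $\eta' \geq 0$ on $A$, I expect a cascading toppling order starting at the peaks of $v$ and propagating outward toward $\partial A$, combined with the last-toppling estimates for $w$, to close the argument.
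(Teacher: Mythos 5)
Your first two steps are fine: pre-crediting the chips emitted by the $\partial A$-topplings of $w$ does make $w|_A$ locally legal for $\tilde\eta$ in $A$ (the usual monotonicity of legality under adding chips), and the identity $\tilde\eta + \Delta_{\Z^d}(w|_A) = \eta + \Delta_{\Z^d} w$ on $A$ is correct. The gap is the final step, which you flag but do not close: you must topple the correction $v$ legally from the configuration $\eta' - \Delta_{\Z^d} v$, where $\eta' = \Delta_{\Z^d} w + \eta$, and this is not a technical verification --- it is the entire content of the proposition. The only information your outline extracts from the dynamics at this stage is $\eta' \geq 0$ on $A$, and that is demonstrably insufficient: the configuration $\eta' \equiv 0$ on a $2\times 2$ block $A \subset \Z^2$ is nonnegative but not recurrent (it is its own forbidden subconfiguration), so no choice of nonnegative superharmonic $v$ vanishing on $\partial A$ can be legal there; consequently any proof of your legality claim must use the hypothesis $w \geq 1$ on $A$ in an essential, quantitative way, and ``a cascading toppling order starting at the peaks of $v$'' is precisely the assertion that needs proof, not a route to it. Moreover it is far from clear that an \emph{arbitrary} deficiency-covering $v$ works; one would expect to have to choose $v$ and the toppling order via the burning algorithm, at which point you are reproving the standard equivalence between Dhar's criterion and the reachability definition of recurrence rather than bypassing it.

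The paper's argument is different and much shorter: it works with forbidden subconfigurations, i.e.\ nonempty $F$ such that $\eta'(x) < \#\{y \in F : y \sim x\}$ for every $x \in F$, whose absence characterizes recurrence (Dhar's criterion, quoted from the standard references). If such an $F \subseteq A$ existed, pick the site $\sigma \in F$ whose \emph{last} toppling under $w$ occurs earliest; since $w \geq 1$ on $A \supseteq F$, every neighbor of $\sigma$ in $F$ topples at least once after $\sigma$'s last toppling, and each such toppling deposits a chip on $\sigma$, so $\eta'(\sigma) \geq \#\{y \in F : y \sim \sigma\}$, a contradiction. This ``earliest last toppling'' step is exactly where the hypothesis $w \geq 1$ in $A$ enters, and it is the ingredient missing from your construction; if you want to keep the direct-witness strategy, you would additionally have to prove that absence of forbidden subconfigurations yields the pair $(s,u)$ in the paper's definition, which is the nontrivial standard theorem your plan implicitly assumes.
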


\section{Method overview} \label{sec:overview}
Let $W \subset \R^d$ be a bounded Lipschitz domain and $\eta :\Z^d \to \Z$ a uniformly bounded initial sandpile. 
For every $n \geq 1$ and finite difference approximation $W_n := n W \cap \Z^d$, there is a unique odometer function $v_n : W_n \to \N$ which determines the stabilization of $\eta$ in $W_n$. Uniqueness of $v_n$ comes from the fact that it solves the discrete sandpile PDE for $\eta$ on $W_n$ and this PDE enjoys a comparison principle, Proposition \ref{burning}. 

Hence, for each fixed $n$, $v_n$ is unique. As $\eta$ is uniformly bounded, the Laplacians of $v_n$
are in turn bounded and $\bar{v}_n(x):= n^{-2} v_n([n x])$ converges along {\it subsequences} uniformly in $W$. However, there is no reason to expect the limits to coincide for arbitrary choices of $\eta$. We must have, at least, 
convergence of local averages of $\eta$ itself. A natural choice is to assume that $\eta$ is drawn from a stationary and ergodic probability distribution. 

If we remove the integer constraint on the sandpile PDE, we can use simple random walk 
difference estimates (see Section \ref{sec:divisible_sandpile}), to show that the limit {\it divisible} sandpile
coincides with the limit of the averaged divisible sandpile, $\E(\eta(0))$. In this case, the odometers 
solve a linear PDE in the limit, the Poisson problem, $\Delta \bar{v} + \E(\eta(0)) = 2 d -1$, on $W$. 

The integer constraint imposes a nonlinear structure on the problem which makes it more difficult. 
Essentially our only available tool is a discrete comparison principle. Our method can be understood 
as a technique to push the comparison principle for the sandpile from the lattice to the continuum. We show, roughly,
that the discrete sandpile PDE converges to a fully nonlinear elliptic PDE with a comparison principle.
The limit PDE inherits features from both the lattice $\Z^d$ and the distribution of $\eta$. (Interestingly, simulations 
suggest that the limit PDE is wildly different for different probability distributions, even when they share the same mean.)

Our proof of Theorem \ref{thetheorem} follows the stochastic homogenization program of Armstrong and Smart \cite{armstrong2014quantitative}. The strategy involves comparing the odometer function of the sandpile to the solution of an auxillary Monge-Amp{\`e}re equation \cite{gutierrez2016monge,trudinger2008monge} which will control how much the odometer function can `bend'. The proof has three steps: 
\begin{enumerate}
	\item Identify the {\it effective equation} $\bar{F}_{\eta}$ describing the limit PDE via the subadditive ergodic theorem applied to an auxiliary Monge-Amp{\`e}re measure, $\mu$.
	\item Show that $\bar{F}_{\eta}$ inherits the comparison principle for sandpiles.
	\item Conclude by showing every subsequential limit is a viscosity solution to $\bar{F}_{\eta}(D^2(v)) = 0$.
\end{enumerate}

The most difficult part of this program is to show that $\bar{F}_{\eta}$ has a comparison principle, Lemma \ref{flat}. In the fully nonlinear elliptic setting, this is done under the assumption of {\it uniform ellipticity}. The discrete sandpile PDE is not uniformly elliptic (a priori), so our argument for this is completely new. Lack of uniform ellipticity also required us to develop new arguments for the regularity of $\mu$, Lemma \ref{strict-convexity} and Lemma \ref{continuity}.

\section{A monotone quantity} \label{sec:monotone_quantity}

\subsection{The definition of $\mu$} 
In this section we introduce $\mu$, a monotone quantity which will control solutions to the discrete sandpile PDE. 
For a function $v: \Z^d \to \Z$ and $x \in A \subset \Z^d$ let
\[
\partial^+(v, x,A)  = \{ p \in \R^d : v(x) + p \cdot (y-x) \geq v(y) : \mbox{ for all $y \in \bar{A}$}\}
\]
denote the supergradient set of $v$ at $x$ in $A$. Similarly, 
\[
\partial^-(v, x,A)  = \{ p \in \R^d : v(x) + p \cdot (y-x) \leq v(y) : \mbox{ for all $y \in \bar{A}$}\}
\]
is the subgradient set at $x$.  For short-hand, we omit the set $A$ when it is clear and write
\[
\partial^+(v, A) =  \cup_{x \in A} \partial^+(v,x). 
\]

To completely identify a fully nonlinear elliptic PDE, it suffices to recognize when a parabola is a supersolution or a subsolution. This fundamental observation is due to Caffarelli \cite{caffarelli1999note} and was employed by Caffarelli, Souganidis, and Wang in their obstacle problem argument for stochastic homogenization of fully nonlinear uniformly elliptic equations \cite{caffarelli2005homogenization, armstrong2014stochastic}.

Our method is similar: we will perturb solutions by a parabola and then define the effective equation, $\bar{F}_{\eta}$, through these perturbed limits. For $l \in \R$, $M \in \symm^d$, and $\eta \in \Omega$, denote the set of perturbed subsolutions as
\[
S(A, \eta, l, M) =  \{ u: \bar{A} \to \N : u \in \mathcal{L}(\eta, A) \}  - \left( q_l  + q_M \right)
\] 
and the set of perturbed supersolutions as
\[
S^*(A, \eta, l, M) =   \{ v: \bar{A} \to \N : v \in \mathcal{S}(\eta, A)  \}  - \left( q_l + q_M \right). 
\]
The monotone quantity controlling subsolutions is then
\[
\mu(A, \eta, l, M) = \sup \{ |\partial^+(w,A)|:   w \in S(A, \eta, l, M) \},
\]
while the monotone quantity which controls supersolutions is
\[
\mu^*(A, \eta, l, M)  = \sup \{ |\partial^-(w,A)|: w \in S^*(A, \eta, l , M)  \}. 
\]
The results in the next two subsections are completely deterministic, so we fix $\eta \in \Omega$.

\subsection{Comparing subsolutions and supersolutions}

We will need to compare legal and stabilizing toppling functions throughout this paper. However, the discrete sandpile PDE is nonlinear:  
if $v$ is a stabilizing toppling function for $\eta$, then $-v$ is not a legal toppling function for $-\eta$ (unless $v = 0$).  This makes 
it difficult to compare legal and stabilizing toppling functions. However, through $\mu$, we can compare the two using the following lemma, which roughly states that legal, stabilizing 
toppling functions maximize curvature. 

\begin{lemma} \label{stabilizing-legal}
If $u \in \mathcal{L}(\eta, A)$, the solution of
\[
h \in \mathcal{L}(\eta, A) \cap  \mathcal{S}(\eta, A) \mbox{ and $h = u$ on $\partial A$}, 
\]
satisfies $\partial^+(u, A) \subseteq \partial^+(h, A)$. Similarly, if $v \in \mathcal{S}(\eta, A)$, then the solution of 
 \[
h^* \in \mathcal{L}(\eta, A) \cap  \mathcal{S}(\eta, A) \mbox{ and $h^* = v$ on $\partial A$}, 
 \]
 satisfies $\partial^-(v, A) \subseteq \partial^-(h^*, A)$.
 \end{lemma}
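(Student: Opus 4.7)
My plan is to first establish the monotonicity $h \geq u$ (and $h^* \leq v$) and then run a short affine-barrier argument to transfer supergradients from $u$ to $h$.

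For the monotonicity I use the natural splitting at the boundary of $A$: set $w(x) := u(x)\mathbf{1}_{\partial A}(x)$ and $\hat{u}(x) := u(x)\mathbf{1}_A(x)$, so that $u = w + \hat u$, $w = 0$ on $A$, and $\hat u = 0$ on $\partial A$. By the definition of $\mathcal{L}(\eta,A)$, $\hat u$ is legal for $\eta + \Delta_{\Z^d} w$ in $A$. The same splitting applied to $h$ yields $h = w + \hat h$, with the \emph{identical} boundary part $w$ because $h|_{\partial A} = u|_{\partial A}$, and $\hat h$ legal and stabilizing for $\eta + \Delta_{\Z^d} w$ in $A$ with zero boundary. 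The longest-legal characterization, Proposition \ref{prop:least_action}(1), applied to the initial datum $\eta + \Delta_{\Z^d} w$ on $A$, then gives $\hat u \leq \hat h$, and hence $h \geq u$ pointwise on $\bar A$.

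Now fix $p \in \partial^+(u, x, A)$ and set $L(y) := u(x) + p\cdot(y-x)$, so $L \geq u$ on $\bar A$ with $L(x) = u(x)$. Define $g := h - L$. On $\partial A$, $g = u - L \leq 0$; at $y = x$, $g(x) = h(x) - u(x) \geq 0$. Thus $\max_{\bar A} g \geq 0$, and this maximum is attained at some $x' \in A$ (the interior point $x$ itself works if no strictly larger value exists). At this $x'$,
\[
h(y) - h(x') \leq L(y) - L(x') = p\cdot(y - x') \quad \mbox{for all } y \in \bar A,
\]
which is exactly $p \in \partial^+(h, x', A)$. Since $x$ was arbitrary, $\partial^+(u, A) \subseteq \partial^+(h, A)$.

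The supersolution case is symmetric. The same splitting $v = w + \hat v$ with $w = v\mathbf{1}_{\partial A}$ produces a $\hat v$ that stabilizes $\eta + \Delta_{\Z^d} w$ on $A$ with zero boundary, so the shortest-stabilizing characterization, Proposition \ref{prop:least_action}(2), gives $\hat h^* \leq \hat v$, hence $h^* \leq v$. For $p \in \partial^-(v, x, A)$, the barrier $L(y) = v(x) + p\cdot(y-x)$ satisfies $L \leq v = h^*$ on $\partial A$ and $L(x) = v(x) \geq h^*(x)$, so $g := h^* - L$ attains its minimum in $A$ at some $x'$, giving $p \in \partial^-(h^*, x', A)$.

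The only real subtlety is verifying that the boundary parts in the two decompositions coincide, which is immediate from $h|_{\partial A} = u|_{\partial A}$; the remainder is standard discrete maximum-principle bookkeeping, and I do not foresee a serious technical obstacle.
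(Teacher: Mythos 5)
Your proof is correct and takes essentially the same route as the paper: the paper likewise gets $h \geq u$ from the least-action principle plus the matching boundary data (your boundary decomposition simply spells out that step), and its "shift the supporting hyperplane up by the minimal amount $t$" argument is exactly your "maximize $g = h - L$, and note the maximum is attained in $A$" argument.
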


\begin{proof}
Take $p \in \partial^+(u,x,A)$ and let
\[
t = \inf \{ c \in \R: u(x) + p \cdot(y-x) + c \geq h(y) \mbox{ for all $y \in \bar{A}$} \}
\]
By the least action principle and boundary assumption, $h \geq u$, hence $t \geq 0$. 
Also, as $A$ is finite, $t < \infty$. Since $h= u$ on $\partial A$, 
we must have $y \in A$ for which 
\[
u(x) + p \cdot(y-x) + t = h(y) , 
\]
which shows $p \in \partial^+(h, y, A)$. The proof for subgradients is similar. 
\end{proof}

\subsection{Basic properties of $\mu$}

We now establish control on solutions from above and below which will follow from the proof of the Alexandroff-Bakelman-Pucci (ABP) inequality (Theorem 3.2 in \cite{roberts1995fully} and Theorem 1.4.2 in \cite{gutierrez2016monge}). 
\begin{lemma} \label{ABP}
There exists $C_d > 0$ so that for all  $w \in  S(B_n, \eta, l , M)$, 
\begin{equation}
\max_{x \in  B_n} w(x) \leq  \max_{x \in \partial B_n} w(x) + C_d n \mu(B_n, \eta, l ,M)^{1/d}
\end{equation}
and for all  $w \in  S^*(B_n, \eta, l , M)$, 
\begin{equation}
\inf_{x \in  \partial B_n} w(x) \leq  \inf_{x \in B_n} w(x) + C_d n \mu^*(B_n, \eta, l ,M)^{1/d}.
\end{equation}
\end{lemma}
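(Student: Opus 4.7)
The plan is to apply the classical Alexandroff--Bakelman--Pucci ``slide-a-plane'' argument, adapted to the lattice. The definitions of $\mu$ and $\mu^*$ are tailored precisely so that this argument runs without invoking any sandpile-specific structure; legality, stabilization, and the particular shape of the perturbation $q_l + q_M$ play no role here. I will treat $w$ simply as an integer-valued function on $\bar{B}_n$, and the estimate then becomes a purely geometric statement about the Lebesgue measure of its super/subgradient set.

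For the subsolution bound, fix $w \in S(B_n,\eta,l,M)$, set $M_0 := \max_{\bar{B}_n} w$ and $m_0 := \max_{\partial B_n} w$, and assume $M_0 > m_0$ (otherwise there is nothing to prove). For each $p \in \R^d$ with $|p| < (M_0 - m_0)/(2n)$, consider the affine family $L_c(y) = p \cdot y + c$ and take the smallest $c$ such that $L_c \geq w$ on $\bar{B}_n$; since $\bar{B}_n$ is finite, $c$ is attained at some $x^* \in \bar{B}_n$ with $L_c(x^*) = w(x^*)$. The key check is that $x^*$ is interior: picking $x_0 \in B_n$ with $w(x_0) = M_0$, the constraint $L_c(x_0) \geq M_0$ forces $c \geq M_0 - |p|n$, and hence for every $y \in \partial B_n$,
\[
L_c(y) \;\geq\; c - |p|n \;\geq\; M_0 - 2|p|n \;>\; m_0 \;\geq\; w(y),
\]
strictly. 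So the touching point cannot lie on $\partial B_n$, giving $p \in \partial^+(w,x^*,B_n) \subseteq \partial^+(w,B_n)$. Thus the open Euclidean ball of radius $(M_0 - m_0)/(2n)$ sits inside $\partial^+(w,B_n)$, and
\[
\mu(B_n,\eta,l,M) \;\geq\; |\partial^+(w,B_n)| \;\geq\; c_d\Bigl(\tfrac{M_0 - m_0}{2n}\Bigr)^d,
\]
where $c_d = |B_1|$. Rearranging yields the first inequality with $C_d = 2 c_d^{-1/d}$.

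The supersolution bound follows by the symmetric argument applied to $w \in S^*(B_n,\eta,l,M)$: slide affine functions from below, taking the largest $c$ with $L_c \leq w$ on $\bar{B}_n$, and verify that the touching point is interior whenever $|p| < (\inf_{\partial B_n} w - \inf_{B_n} w)/(2n)$; each such $p$ lies in $\partial^-(w,B_n)$, and the identical volume comparison concludes.

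There is no serious obstacle: the statement is a clean deterministic lemma and the argument is essentially a translation of the standard ABP proof to the lattice. The only subtlety worth flagging is maintaining the strict inequality on $\partial B_n$ so that the touching point is forced into $B_n$, which is the reason the factor $2n$ (the diameter of $B_n$) appears in the denominator rather than $n$.
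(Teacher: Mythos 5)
Your proof is correct and follows essentially the same route as the paper: the classical ABP ``slide-a-plane'' argument, showing the touching point must be interior and hence that a ball of slopes of radius proportional to $(M_0-m_0)/n$ lies in $\partial^+(w,B_n)$, then comparing volumes and bounding $|\partial^+(w,B_n)|$ by $\mu$. The only (harmless) quibble is that points of $\partial B_n$ can have norm slightly exceeding $n$, so the denominator should really be the diameter of $\bar{B}_n$, which only changes the constant $C_d$.
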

\begin{proof}
Let $a = \max_{x \in B_n} w(x) - \max_{x \in \partial B_n} w(x)$. Assume $a > 0$, otherwise the claim is immediate. 
Choose $x_0$ so that $\max_{x \in  B_n} w(x) = w(x_0)$. Let $p \in \R^d$  satisfy $|p| \leq a \mbox{diam}(B_n)^{-1} = C_d a/n$. Then, 
for each $x \in B_n$, 
\begin{equation}\label{ABPeq:1}
\begin{aligned}
w(x_0) + p \cdot (x-x_0) &\geq w(x_0) - |p| |x-x_0| \\
&> w(x_0) - w(x_0) + \max_{x \in \partial B_n} w(x)  \\
&= \max_{x \in \partial B_n} w(x).
\end{aligned}
\end{equation}
Now, we shift the hyperplane up just enough so that it lies above $w$ in $\bar{B}_n$: let 
\[
t = \inf\{ c  \in \R:  w(x_0) + p \cdot(x-x_0)  + c \geq w(x) \mbox{ for all $x \in \bar{B}_n$} \}
\]
and note that $t \geq 0$ and that there exists $y \in \bar{B}_n$ with 
\[
w(y) = w(x_0) + p \cdot(y-x_0)  + t. 
\]
If $t > 0$, then \eqref{ABPeq:1} shows that $y \in B_n$.   If $t = 0$, we can choose $y = x_0$. Hence, there is a $y \in B_n$ with $p \in \partial^+(w, y , B_n)$. Since this holds for every $|p|  < a/\mbox{diam}(B_n)$, 
this implies
\[
|\partial^+(w, B_n)| \geq C_d \frac{a^d}{ {\diam}(B_n)^d}.
\]
And so rearranging, we get 
\[
a \leq |\partial^+(w, B_n)|^{1/d} C_d \diam(B_n) \leq C_d n \mu(B_n, \eta, l, M)^{1/d} 
\]
The proof for $\mu^*$ is identical.

\end{proof}

Next we introduce the concave envelope of a subsolution. First, we extend the discrete domain $Q_n$ and its closure to their convex hulls: $\mathcal{Q}_n := \conv{Q_n}$ and
$\mathcal{\bar{Q}}_n := \conv{\bar{Q}}_n$. 
Then, define the concave envelope of $w$ by, $\Gamma_w:  \mathcal{\bar{Q}}_n \to \R$, 
\[
\Gamma_w(x) = \inf_{p \in \R^d} \max_{y \in \bar{Q}_n} \left( w(y) + p \cdot (x-y) \right), 
\]
noting that $\Gamma_w$ is the pointwise least concave function so that on $\bar{Q}_n$, $\Gamma_w \geq w$. We recall a useful representation
of the concave envelope.

\begin{prop}[Lemma 4.5 in \cite{imbert2013introduction}] \label{prop:concave_rep}
We can alternatively represent
\[
\Gamma_w(x) = \sup\{ \sum_{i=1}^{d+1} \lambda_i w(x_i) : x_i \in  \bar{Q}_n, \sum_{i=1}^{d+1} \lambda_i x_i = x,  \lambda_i \in [0,1], \sum_{i=1}^{d+1} \lambda_i = 1\}, 
\]
and if 
\[
\Gamma_w(x) = \sum_{i=1}^{d+1} \lambda_i w(x_i) , 
\]
then for each $x_i$, $\Gamma_w(x_i) = w(x_i)$ and $\Gamma_w$ is linear in $\conv(x_1, \ldots, x_{d+1})$. 
\end{prop}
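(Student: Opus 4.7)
The plan is to denote the supremum on the right-hand side by $S(x)$ and prove $S(x)=\Gamma_w(x)$ by two inequalities, then exploit concavity to derive the structural consequences in the second claim.

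For $S(x)\le\Gamma_w(x)$, observe that for each $p\in\R^d$ the function $\ell_p(y):=\max_{z\in\bar{Q}_n}(w(z)-p\cdot z)+p\cdot y$ is affine, equals $\max_{z\in\bar{Q}_n}(w(z)+p\cdot(y-z))$, and satisfies $\ell_p\ge w$ on $\bar{Q}_n$, so by definition $\Gamma_w=\inf_p \ell_p$. For any convex combination $x=\sum_i\lambda_i x_i$ with $x_i\in\bar{Q}_n$, affinity of $\ell_p$ gives
\[
\sum_i \lambda_i w(x_i)\;\le\;\sum_i \lambda_i \ell_p(x_i)\;=\;\ell_p(x).
\]
Taking the supremum over convex combinations on the left and the infimum over $p$ on the right yields $S(x)\le\Gamma_w(x)$.

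For the reverse inequality, I would first note $S\ge w$ on $\bar{Q}_n$ (via trivial one-atom combinations) and show that $S$ is concave on $\mathcal{\bar{Q}}_n$: given $\varepsilon$-optimal representations at two points $y$ and $z$, concatenating them with weights $\theta$ and $1-\theta$ yields a valid combination at $\theta y+(1-\theta)z$ that certifies $S(\theta y+(1-\theta)z)\ge \theta S(y)+(1-\theta)S(z)-\varepsilon$. Since $\Gamma_w$ is the pointwise smallest concave majorant of $w$ on $\bar{Q}_n$ (any concave $g\ge w$ admits a supporting affine function from above at each point, which then majorizes $w$ as well), $\Gamma_w\le S$ follows. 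To restrict to at most $d+1$ atoms, I invoke Carath\'eodory's theorem on probability measures supported on $\bar{Q}_n$ with mean $x$: the linear functional $\nu\mapsto\int w\,d\nu$ attains its supremum over this convex set at an extreme point, whose support has size at most $d+1$ ($d$ mean constraints plus one normalization).

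For the second claim, suppose $\Gamma_w(x)=\sum_i\lambda_i w(x_i)$ with $x=\sum_i\lambda_i x_i$; dropping indices with $\lambda_i=0$, concavity of $\Gamma_w$ together with $\Gamma_w\ge w$ give
\[
\Gamma_w(x)\;\ge\;\sum_i \lambda_i \Gamma_w(x_i)\;\ge\;\sum_i \lambda_i w(x_i)\;=\;\Gamma_w(x),
\]
so equality holds throughout, forcing $\Gamma_w(x_i)=w(x_i)$ at every surviving $i$. Now take a supporting affine $\ell\ge\Gamma_w$ on $\mathcal{\bar{Q}}_n$ with $\ell(x)=\Gamma_w(x)$ (existing by concavity and boundedness): the identity $\ell(x)=\sum_i\lambda_i\ell(x_i)$ combined with $\ell(x_i)\ge\Gamma_w(x_i)=w(x_i)$ forces $\ell(x_i)=w(x_i)$ at each $i$. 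Hence for any $y=\sum_i\mu_i x_i\in\conv(x_1,\ldots,x_{d+1})$, the concavity lower bound $\Gamma_w(y)\ge\sum_i\mu_i w(x_i)$ and the majorant upper bound $\Gamma_w(y)\le\ell(y)=\sum_i\mu_i w(x_i)$ coincide, so $\Gamma_w$ agrees with the affine $\ell$ on $\conv(x_1,\ldots,x_{d+1})$, giving the claimed linearity (passing to an affinely independent subfamily if the $x_i$ are not in general position). This is classical convex analysis, so I do not anticipate any serious obstacle; the only mildly technical step is the $d+1$ bound on the number of atoms, handled by the extreme-point/Carath\'eodory argument above.
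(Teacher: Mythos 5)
Your proof is correct, and it follows the standard convex-analysis route that the paper itself does not spell out (the proposition is simply cited as Lemma 4.5 of Imbert--Silvestre): identify the supremum over convex combinations as the least concave majorant of $w$, then obtain the contact condition $\Gamma_w(x_i)=w(x_i)$ and the linearity on $\conv(x_1,\ldots,x_{d+1})$ from the equality case of concavity together with a supporting affine function. Your extra care with the restriction to $d+1$ atoms via the extreme-point/Carath\'eodory argument, and the observation that in this finite (polyhedral) setting supporting affine majorants exist at every point of $\mathcal{\bar Q}_n$, fill in exactly the details the cited reference handles.
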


The next statement uses this representation to show that the measure of the supergradient set is preserved under the operation of taking the concave envelope. As the concave envelope is defined on $\R^d$, we first extend the definition of supergradient set
to functions $g: \mathcal{Q}_n \to \R$, 
\begin{equation}
\partial^+(g, \mathcal{Q}_n) = \{p \in \R^d: \exists x \in \mathcal{Q}_n : g(x) + p \cdot (y-x) \geq g(y) : \mbox{ for all $y \in \mathcal{\bar{Q}}_n$}\}.
\end{equation}

\begin{lemma} \label{MA-rep} 
\[
\begin{aligned}
&\sum_{x \in Q_n} |\partial^+(w, x, Q_n)| = |\partial^+(w, Q_n)| \\
&=  \sum_{\{ x: \Gamma_w(x) = w(x) \}}  |\partial^+(\Gamma_w, x, Q_n)| = |\partial^+(\Gamma_w, \mathcal{Q}_n)|. 
\end{aligned}
\]
\end{lemma}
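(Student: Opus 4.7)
I would organize the four-way equality as three non-trivial claims: (i) the pointwise decomposition $\sum_{x \in Q_n}|\partial^+(w,x,Q_n)| = |\partial^+(w,Q_n)|$; (ii) the pointwise identification $\partial^+(w, x, Q_n) = \partial^+(\Gamma_w, x, \mathcal{Q}_n)$ at contact points $x \in Q_n$ with $\Gamma_w(x) = w(x)$, and $\partial^+(w, x, Q_n) = \emptyset$ otherwise; and (iii) the extension $|\partial^+(\Gamma_w, \mathcal{Q}_n)| = \sum_{\{x : \Gamma_w(x) = w(x)\}} |\partial^+(\Gamma_w, x, \mathcal{Q}_n)|$. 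These, combined with essential disjointness of supergradient sets at distinct base points, give the lemma.

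For claim (ii), given $p \in \partial^+(w, x, Q_n)$, the affine function $L(y) := w(x) + p\cdot(y-x)$ majorizes $w$ on $\bar{Q}_n$. Applying the supremum representation of $\Gamma_w$ from Proposition \ref{prop:concave_rep} term-by-term and using that $L$ is affine gives $L \geq \Gamma_w$ on $\mathcal{\bar{Q}}_n$. Evaluating at $y = x$ forces $\Gamma_w(x) = w(x)$, and then $L \geq \Gamma_w$ with equality at $x$ means $p \in \partial^+(\Gamma_w, x, \mathcal{Q}_n)$. The reverse inclusion at a contact point is immediate from $\Gamma_w \geq w$ with equality at $x$.

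For claim (iii), take $p \in \partial^+(\Gamma_w, z, \mathcal{Q}_n)$ and, by Proposition \ref{prop:concave_rep}, write $z = \sum_{i=1}^{d+1} \lambda_i x_i$ with $\lambda_i > 0$, contact points $x_i \in \bar{Q}_n$, and $\Gamma_w$ affine with some gradient $q$ on $\conv(x_1, \ldots, x_{d+1})$. The supporting affine function $L(y) := \Gamma_w(z) + p\cdot(y-z)$ majorizes $\Gamma_w$ on the simplex while agreeing with $\Gamma_w$ at the relative-interior point $z$; since $\Gamma_w$ is already affine there, this forces $p = q$ along the affine hull of the $x_i$, so $L$ also supports $\Gamma_w$ at each $x_i$. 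Thus every supergradient of $\Gamma_w$ at any base point in $\mathcal{Q}_n$ is also attained at some contact point.

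For claim (i), and for upgrading the union in (iii) to a sum, I would argue essential disjointness: if $p \in \partial^+(w, x, Q_n) \cap \partial^+(w, y, Q_n)$ with $x \neq y \in Q_n$, then $p \cdot (y-x) = w(y) - w(x)$ restricts $p$ to a codimension-one hyperplane, which is Lebesgue-null; the same argument applies to $\Gamma_w$ at distinct contact points. Summing over the finitely many pairs of lattice points and combining with (ii) and (iii) yields all four equalities. The main technical point is (iii): one must discard $\lambda_i = 0$ at the outset so that $z$ lies in the relative interior of $\conv(x_1, \ldots, x_{d+1})$, since the rigidity step $p = q$ on the affine hull fails if $z$ slips to a lower-dimensional face where $\Gamma_w$ might not be affine in every direction.
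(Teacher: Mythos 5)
Your proposal is correct and follows essentially the same route as the paper: split the chain into the union-versus-sum identity, the comparison of supergradients of $w$ and $\Gamma_w$ via Proposition \ref{prop:concave_rep}, and a null-overlap argument. The only real differences are that where the paper certifies that slopes shared by two base points form a null set via the Legendre transform and almost-everywhere differentiability of concave functions, you use the equally valid and more elementary observation that such slopes lie on one of finitely many hyperplanes $p\cdot(y-x)=w(y)-w(x)$, and you spell out the rigidity step (a supporting plane touching $\Gamma_w$ at a point in the relative interior of a simplex on which $\Gamma_w$ is affine must touch it at the contact vertices) that the paper asserts in a single line before falling back on its measure-zero case analysis.
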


\begin{proof}
We split the proof into two steps.
\subsubsection*{Step 1}
We first show that 
\[
|\partial^+(w, Q_n)| = \sum_{x \in Q_n} |\partial^+(w, x, Q_n)|, 
\]
which follows from the proof in the continuous setting:
since 
\[
|\partial^+(w, Q_n)| = | \cup_{x \in Q_n} \partial^+(w, x)|, 
\]
it suffices to show that
\[
S = \{ p \in \R^d : \mbox{ there exists $x,y \in Q_n$, $x \not = y$ and $p \in \partial^+(w, x) \cap \partial^+(w, y)$} \} 
\]
has measure zero.  Denote the discrete Legendre transform $w^*: \R^d \to \R$ by  $w^*(p) := \min_{x \in \bar{Q}_n} (x \cdot p - w(x))$. This is a concave, finite function as $Q_n$ is bounded
and it is a minimum of affine functions.  Further,  if $p \in \partial^+(w, x)$, then $w^*(p) = x \cdot p - w(x)$. Hence, if $p \in S$ then $w^*(p) = x_1 \cdot p - w(x_1) = x_2 \cdot p - w(x_2)$ for $x_1 \not = x_2$. This implies that $w^*(p)$ is not differentiable at $p$. But, since $w^*$ is concave it is differentiable almost everywhere, which implies $S$ has measure zero since it is a subset of a measure zero set. This completes the proof of Step 1.

\subsubsection*{Step 2} 
We now show that 
\[
|\partial^+(w, Q_n)| = |\partial^+(\Gamma_w, \mathcal{Q}_n)|  = \sum_{\{ x: \Gamma_w(x) = w(x) \}}  |\partial^+(\Gamma_w, x, Q_n)|.
\]
First consider $p \in \partial^+(w, x)$ and the affine function $L(y) = w(x) + p \cdot(y-x)$ for $y \in \mathcal{Q}_n$. By definition of the concave envelope, for any $y \in \mathcal{Q}_n$, 
\[
\Gamma_w(x) + p \cdot (y-x) \geq w(x) + p \cdot(y-x) = L(y) \geq \Gamma_w(y),
\]
and so $p \in \partial^+(\Gamma_w, \mathcal{Q}_n)$.

Next, take $p \in \partial^+(\Gamma_w, x)$ for $x \in \mathcal{Q}_n$ and use Proposition \ref{prop:concave_rep} to express 
\[
\Gamma_w(x)  = \sum_{i=1}^{k} \lambda_i w(x_i), 
\] 
for $\lambda_i > 0$, $x_i \in \bar{Q}_n$, and some $k \geq 1$. This implies that $p \in \partial^+(\Gamma_w, x_i)$ for some $x_i \in \bar{Q}_n$. If $k = 1$ and $x_i = x \in Q_n$, we are done as $\Gamma_w(x_i) = w(x_i)$, so suppose not.  Then, we can find some $x_i \not = x$ and $p \in \partial^+(\Gamma_w, x) \cap \partial^+(\Gamma_w, x_i)$. 
However, the argument in Step 1 implies that such $p$ have measure zero. This also implies the third equality.

\end{proof}

The arithmetic-geometric mean inequality and the lower bound on the Laplacian of subsolutions imply an upper bound on $\mu$. 
\begin{lemma} \label{mu-is-bounded}
There is $C := C_{ \eta_{\max}, l, M, d}$ and $C^* := C_{\eta_{\min}, l, M, d}^*$  for which 
\[
\mu(Q_n, \eta, l ,M) < C |Q_n| ,
\]
\[
\mu^*(Q_n, \eta, l ,M) < C^* |Q_n|.
\]
For $l \leq -\eta_{\max} - \mbox{Tr}(M)$
\[
\mu(Q_n, \eta, l , M) = 0 
\]
and for $l \geq (2 d-1) - \eta_{\min} - \mbox{Tr}(M)$
\[
\mu^*(Q_n, \eta, l, M) = 0. 
\]

\end{lemma}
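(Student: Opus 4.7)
The plan is a pointwise, discrete Alexandroff--Bakelman--Pucci estimate summed via Lemma \ref{MA-rep}. Fix $w = u - q_l - q_M$ with $u \in \mathcal{L}(\eta, Q_n)$ and a point $x \in Q_n$. If $p \in \partial^+(w, x, Q_n)$, testing the supergradient inequality against the two lattice neighbors $y = x \pm e_i$ forces
\[
w(x + e_i) - w(x) \,\leq\, p_i \,\leq\, w(x) - w(x - e_i),
\]
so $\partial^+(w, x, Q_n)$ sits inside a $d$-dimensional box whose $i$-th side has length $-\Delta_i w(x)$. This box is empty unless every $-\Delta_i w(x) \geq 0$, in which case $-\Delta_{\Z^d} w(x) = \sum_i (-\Delta_i w(x)) \geq 0$ and the AM-GM inequality produces
\[
|\partial^+(w, x, Q_n)| \,\leq\, \prod_{i=1}^d (-\Delta_i w(x)) \,\leq\, \left( \frac{(-\Delta_{\Z^d} w(x))_+}{d} \right)^d.
\]

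Next I convert the Hessian bound into a condition on $\eta, l, M$. A direct computation gives $\Delta_{\Z^d} q_l \equiv l$ and $\Delta_{\Z^d} q_M \equiv \Tr(M)$, so $\Delta_{\Z^d} w = \Delta_{\Z^d} u - l - \Tr(M)$, and it suffices to lower bound $\Delta_{\Z^d} u$ on $Q_n$. I use the decomposition $u = w_0 + \hat u$ furnished by the definition of $\mathcal{L}(\eta, Q_n)$: $w_0 \geq 0$ vanishes on $Q_n$ and $\hat u$ is legal for $\eta' := \Delta_{\Z^d} w_0 + \eta$ in $Q_n$. Since $w_0 \equiv 0$ inside $Q_n$, $\Delta_{\Z^d} w_0(x) \geq 0$ at every $x \in Q_n$; combined with the observation from Section \ref{sec:sandpile} that any legal toppling satisfies $\Delta_{\Z^d} \hat u + \eta' \geq \min(0, \eta')$, a short case split on the sign of $\eta'(x)$ yields $\Delta_{\Z^d} u(x) \geq -\eta_{\max}$ on $Q_n$, hence
\[
-\Delta_{\Z^d} w(x) \,\leq\, \eta_{\max} + l + \Tr(M).
\]
If $l \leq -\eta_{\max} - \Tr(M)$ the right side is nonpositive, so the first step forces $\partial^+(w, x, Q_n) = \emptyset$ for every $x$, and Lemma \ref{MA-rep} yields $\mu = 0$. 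Otherwise, summing the AM-GM bound over $x \in Q_n$ and invoking Lemma \ref{MA-rep} gives
\[
\mu(Q_n, \eta, l, M) \,\leq\, |Q_n| \left( \frac{\eta_{\max} + l + \Tr(M)}{d} \right)^d,
\]
which produces the stated constant $C_{\eta_{\max}, l, M, d}$.

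The argument for $\mu^*$ is the mirror image. A subgradient $p \in \partial^-(w, x, Q_n)$ of $w = v - q_l - q_M$ with $v \in \mathcal{S}(\eta, Q_n)$ lies in a box with side lengths $\Delta_i w(x) \geq 0$, and the stabilizing condition directly gives $\Delta_{\Z^d} v \leq 2d - 1 - \eta_{\min}$, so $\Delta_{\Z^d} w \leq (2d-1) - \eta_{\min} - l - \Tr(M)$. The same AM-GM summation yields the bound with constant $C^*_{\eta_{\min}, l, M, d}$, and the right side vanishes precisely when $l \geq (2d-1) - \eta_{\min} - \Tr(M)$.

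The only non-cosmetic step is the Laplacian lower bound $\Delta_{\Z^d} u \geq -\eta_{\max}$ for locally legal $u$, which requires reading both pieces of the decomposition: $w_0$ contributes nonnegatively to the Laplacian on $Q_n$, and the legal inner piece $\hat u$ cannot deplete any site below zero. Everything else is the standard discrete convex-analytic packaging of ABP already present in the proof of Lemma \ref{ABP}.
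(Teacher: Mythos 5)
Your proof is correct and follows essentially the same route as the paper: bound $\partial^+(w,x)$ by the box $[w(x+e_i)-w(x),\,w(x)-w(x-e_i)]$ of side lengths $-\Delta_i w$, apply AM--GM together with the Laplacian bounds coming from legality/stability, and sum over $x$ via Lemma \ref{MA-rep}; the paper merely normalizes by an affine transformation first, which your direct box argument renders unnecessary. Your explicit treatment of the locally legal decomposition $u = w_0 + \hat u$ (using $\Delta_{\Z^d} w_0 \ge 0$ inside $Q_n$) is a slightly more careful justification of the paper's one-line claim that $\Delta_{\Z^d} u \ge \min(-\eta_{\max},0)$ in $Q_n$.
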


\begin{proof}

Let $w := u - q_l - q_M \in S(A, \eta, l, M)$. Since $u$ is legal,  $\Delta_{\Z^d} u \geq \min(-\eta_{\max},0)$ in $Q_n$. 
Using $\Delta_{\Z^d} q_M = \mbox{Tr}(M)$, we get $\Delta_{\Z^d} w \geq  - l - \mbox{Tr}(M) -\eta_{\max}$.  

Choose $x \in Q_n$ so that  $|\partial^+(w, x)| > 0$. As the supergradient set is preserved under affine transformations, we may suppose $w(x) = 0$ and $0 \in \partial^+(w, x)$. 
 This implies $w(y) \leq 0$ for all $y \in \bar{A}$. Then, by definition, for $p \in \partial^+(w,x)$, 
\[
p \cdot(x + e_i - x)  \geq w(x+e_i),
\]
and
\[
p \cdot( x - e_i - x) \geq w(x-e_i). 
\]
Putting these two inequalities together, we get for each direction $i = 1, \ldots, d$, 
\begin{equation} \label{discreteMA1}
w(x+e_i)  \leq p_i \leq - w(x-e_i). 
\end{equation}
And so, 
\[
|\partial^+(w,x)| \leq \prod_{i=1}^d \left(- w(x-e_i) - w(x+e_i) \right) = \prod_{i=1}^d (- \Delta_i w).
\]
Our affine transformation of $w$ ensures that $-\Delta_i w \geq 0$, and so an application of the arithmetic geometric mean inequality yields
\[
- \Delta_{\Z^d} w = \sum_{i=1}^d (-\Delta_i w) \geq d \left( \prod_{i=1}^d (-\Delta_i w) \right)^{1/d}.
\]
And so
\begin{equation}
|\partial^+(w,x)|  \leq  d^{-d} (-\Delta_{\Z^d} w)^d \leq  d^{-d} (\eta_{\max} + \mbox{Tr}(M) + l)^d, 
\end{equation}
which implies the claim by Lemma \ref{MA-rep}. The other direction is similar.


\end{proof}

We state the following consequence of the discrete Harnack inequality \cite{lawler2010random} which we will later use to regulate the growth of the concave envelope in balls around contact points. 

\begin{prop}[Lemma 2.17 in \cite{levine2010scaling}] \label{harnack}
Fix $0 < \beta < 1$. For any $f: \Z^d \to \R$ nonnegative, with $f(0) = 0$ and $|\Delta_{\Z^d} f| \leq \lambda$ in $B_R$  there is a constant $C_{\beta,\lambda}$ so that
\begin{equation} \label{harnack_eq}
f(x) \leq C_{\beta,\lambda}  |x|^2
\end{equation}
for $x \in B_{\beta R}$.
\end{prop}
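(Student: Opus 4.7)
The plan is to prove this by invoking the discrete Harnack inequality for nonnegative functions with bounded discrete Laplacian at a scale tuned to $|x|$, rather than at the global scale $R$. The version of the discrete Harnack inequality I would use (a standard consequence of Krylov--Safonov / Kuo--Trudinger arguments on the lattice, cf. the random walk book of Lawler) is the following: for each $\gamma \in (0,1)$ there is $C_{\gamma, d} > 0$ so that whenever $g \geq 0$ on $\bar{B}_r$ and $|\Delta_{\Z^d} g| \leq \lambda$ in $B_r$,
\[
\max_{B_{\gamma r}} g \leq C_{\gamma, d} \min_{B_{\gamma r}} g + C_{\gamma, d} \lambda r^2.
\]
The crucial observation is that the error term $\lambda r^2$ scales with the ball radius, so if we apply the estimate on a ball of radius comparable to $|x|$, we obtain a bound of order $\lambda |x|^2$ at the point $x$.

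Fix $x \in B_{\beta R}$ and split into two regimes. If $|x| \leq R/3$, set $r = 3|x|$ and apply the Harnack estimate with $\gamma = 1/2$: then $r \leq R$ so the hypothesis is met, both $0$ and $x$ lie in $B_{r/2}$, and the vanishing $f(0) = 0$ combined with $f \geq 0$ forces $\min_{B_{r/2}} f = 0$. Hence
\[
f(x) \leq \max_{B_{r/2}} f \leq C_{1/2,d} \lambda r^2 = 9 C_{1/2, d} \lambda |x|^2.
\]
If instead $R/3 < |x| < \beta R$, apply the Harnack inequality directly on $B_R$ with ratio $\gamma = \beta$; again $f(0) = 0$ kills the min, giving $f(x) \leq C_{\beta, d} \lambda R^2$, which is bounded by $9 C_{\beta, d} \lambda |x|^2$ since $R^2 \leq 9 |x|^2$ in this regime. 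Taking $C_{\beta, \lambda} = 9 \lambda \max(C_{1/2, d}, C_{\beta, d})$ completes the bound uniformly over $x \in B_{\beta R}$. (For $\beta \leq 1/3$ only the first case occurs and the argument is even simpler.)

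The only real obstacle is the discrete Harnack inequality itself, which is nontrivial but entirely classical — it does not depend on anything sandpile-specific, and is imported from the discrete elliptic PDE literature. Once that tool is on the table, the proof is essentially a one-line scaling argument: apply Harnack at the smallest scale on which the hypothesis still holds and which still encloses both the origin (to annihilate the min using $f(0) = 0$) and the target point $x$.
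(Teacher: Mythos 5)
Your argument is correct: the Harnack inequality with inhomogeneous term applied on a ball of radius comparable to $|x|$ (using $f(0)=0$ to kill the minimum), plus the direct application on $B_R$ in the regime $|x|>R/3$, gives exactly the quadratic growth bound with a constant depending only on $\beta$, $\lambda$, and $d$. The paper does not prove this statement itself but imports it as Lemma 2.17 of Levine--Peres, whose proof is the same standard route you take (split off the Green's potential of $\Delta_{\Z^d} f$, apply the discrete Harnack inequality at the appropriate scale), so your proposal matches the intended argument.
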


\subsection{Convergence of $\mu$}

We next use the multiparameter subadditive ergodic theorem of Akcoglu and Krengel \cite{akcoglu1981ergodic}
as modified by Dal Maso and Modica \cite{dal1985nonlinear} to show almost sure convergence of $\mu$. 
For the reader's convenience, we restate the theorem, following 
the exposition in \cite{armstrong2014regularity}. 

Let $\mathcal{U}_0$ be the family of bounded subsets of $\Z^d$ and $\mathcal{L}$ the set 
of bounded Lipschitz domains in $\R^d$. A function $f: \mathcal{U}_0 \to \R$ 
is {\it subadditive} if 
\[
f(A) \leq \sum_{j=1}^{k} f(A_j), 
\]
whenever $k \in \N$ and $A, A_1, \ldots, A_k \in \mathcal{U}_0$ are such that $A_1, \ldots, A_k$ are pairwise disjoint and $A = \cup_{j=1}^k A_j$. For a fixed constant $C$, let $\mathcal{M}_C$
be the collection of subadditive functions $f: \mathcal{U}_0 \to \R$ which satisfy 
\[
0 \leq f(A) \leq C |A| \mbox { \qquad for every $A \in \mathcal{U}_0$}.
\]
A {\it subadditive process} is a function $f: \Omega \to \mathcal{M}_C$. Overload notation and write 
$f(A, \eta) = f(\eta)(A)$ for $A \in \mathcal{U}_0$ and $\eta \in \Omega$. Recall that we have assumed the probability measure is stationary and ergodic.

\begin{prop}[Multiparameter subadditive ergodic theorem] \label{prop:subadditive}
	Let $f: \Omega \to \mathcal{M}_C$ be a subadditive process. There exists
	an event $\Omega_0$ of full probability and a constant $0 \leq a \leq C$ so
	that for every $\eta \in \Omega_0$ and $W \in \mathcal{L}$, 
	\begin{equation}
	\lim_{n \to \infty} \frac{f(n W \cap \Z^d, \eta)}{|n W \cap \Z^d|} = a. 
	\end{equation}
	
\end{prop}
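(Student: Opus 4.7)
The plan is to reduce the statement to the classical multiparameter subadditive ergodic theorem of Akcoglu--Krengel applied to cubes, and then bootstrap to arbitrary Lipschitz domains by a sandwich argument that exploits only the inequality $f(A)\le\sum_j f(A_j)$ (since subadditivity gives no free lower bound). A secondary point, to be handled at the end, is making the full-measure event $\Omega_0$ work uniformly over the uncountable family $\mathcal{L}$.

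First, I would apply Akcoglu--Krengel to the centered cubes: the array $\{f(Q_n,\eta)\}_{n\ge 1}$ is subadditive with the required stationarity via $f(y+A,\eta)=f(A,T_y\eta)$, and the uniform bound $f(A)\le C|A|$ gives integrability. Their theorem yields an $\mathcal F$-measurable limit $a(\eta)\in[0,C]$ with $f(Q_n,\eta)/|Q_n|\to a(\eta)$ almost surely; stationarity forces $a$ to be $T_y$-invariant and ergodicity forces it to be a.s.\ constant, call it $a$. By bounded convergence and subadditivity on cubes (which forces $n\mapsto \E f(Q_n)/|Q_n|$ to converge), one has in addition $\E f(Q_\ell)/|Q_\ell|\to a$ as $\ell\to\infty$.

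Now fix $W\in\mathcal L$ and write $A_n=nW\cap\Z^d$. For the \emph{upper bound}, fix a mesoscopic scale $\ell$ and tile $\R^d$ by translates $z+Q_\ell$ with $z\in\ell\Z^d$. Decompose $A_n=\bigsqcup_{z\in I_\ell(n)}(z+Q_\ell)\sqcup R_n$, where $I_\ell(n)$ indexes tiles entirely inside $A_n$ and $R_n$ is a boundary layer; Lipschitz regularity of $\partial W$ gives $|R_n|\le C_W\,\ell\, n^{d-1}$. Subadditivity, $f\le C|\cdot|$, and the standard $\Z^d$-ergodic theorem applied to the stationary field $z\mapsto f(z+Q_\ell,\eta)$ yield
\[
\limsup_{n\to\infty}\frac{f(A_n,\eta)}{|A_n|}\le\frac{\E f(Q_\ell,\cdot)}{|Q_\ell|},
\]
and sending $\ell\to\infty$ gives $\limsup\le a$. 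For the \emph{lower bound}, enclose $nW\subset Q_{Dn}$ for $D$ large and apply subadditivity to the partition $Q_{Dn}=A_n\sqcup(Q_{Dn}\setminus A_n)$:
\[
f(Q_{Dn},\eta)\le f(A_n,\eta)+f(Q_{Dn}\setminus A_n,\eta).
\]
Dividing by $|Q_{Dn}|$, using Step 1 on the left and the just-proved upper bound applied to the complement set (also Lipschitz) on the right, then rearranging with $|A_n|/|Q_{Dn}|\to|W|/D^d$, gives $\liminf f(A_n)/|A_n|\ge a$.

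The main obstacle is the lower bound, since subadditivity alone never bounds $f(A)$ from below; the trick above circumvents this by applying subadditivity on a superset and treating the complement with the upper-bound machinery. The final step is to produce a single $\Omega_0$ of full measure that works for \emph{every} $W\in\mathcal L$. For this I would fix a countable family $\mathcal L_0\subset\mathcal L$ of finite unions of rational dyadic cubes, run the argument above on $\mathcal L_0$ to obtain one full-measure event, and then for general $W\in\mathcal L$ sandwich $W^-\subset W\subset W^+$ by members of $\mathcal L_0$ with $|W^+\setminus W^-|$ arbitrarily small. Subadditivity on the inclusions $A_n(W^-)\subset A_n(W)\subset A_n(W^+)$, combined with the ergodic limits on $\mathcal L_0$, forces $f(A_n(W),\eta)/|A_n(W)|\to a$ on the same event $\Omega_0$, completing the proof.
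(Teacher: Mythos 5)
The paper does not actually prove Proposition \ref{prop:subadditive}: it is quoted from Akcoglu--Krengel as modified by Dal Maso--Modica, following the exposition in Armstrong--Smart, so your argument is a reconstruction of the literature proof rather than a parallel to anything in the text. Your skeleton is the standard one and is mostly sound: the classical theorem on centered cubes, a mesoscopic tiling for the upper bound on general domains, the complementation trick inside a large cube $Q_{Dn}$ for the lower bound (your algebra there checks out, and $Q_D\setminus \bar W$ does have the small-boundary-layer property needed), and a countable dyadic family plus sandwiching to get a single event $\Omega_0$ valid for all of $\mathcal{L}$.

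There is, however, one concrete gap, in the upper bound. Your tiling uses disjoint translates $z+Q_\ell$ with $z\in\ell\Z^d$, so the ergodic averages you need are averages of the field $z\mapsto f(z+Q_\ell,\eta)=f(Q_\ell,T_z\eta)$ over the \emph{sub-lattice} $\ell\Z^d$. But $\mathbf{P}$ is only assumed ergodic for the full $\Z^d$-action, and ergodicity is not inherited by sub-lattice actions (consider a uniformly random shift of an $\ell$-periodic configuration). Hence the almost sure limit of those averages is the conditional expectation $\E[\,f(Q_\ell,\cdot)\mid\mathcal{I}_\ell\,]/|Q_\ell|$ with respect to the $\ell\Z^d$-invariant $\sigma$-algebra, which can exceed $\E f(Q_\ell)/|Q_\ell|$ on a set of positive probability; as written, your displayed inequality and the passage $\ell\to\infty$ are not justified. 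Two standard repairs: (i) average the bound over the $\ell^d$ shifted tilings indexed by $t\in\{0,\dots,\ell-1\}^d$, which turns the sub-lattice sum into a full $\Z^d$-average of $f(\cdot+Q_\ell,\eta)$, where ergodicity applies; or (ii) keep $X_\ell:=\E[\,f(Q_\ell,\cdot)\mid\mathcal{I}_\ell\,]/|Q_\ell|$, note that tiling a large centered cube gives $a\le X_\ell$ almost surely while $\E X_\ell\to a$, so $X_\ell\to a$ in $L^1$ and almost surely along a subsequence of scales $\ell$, which suffices. A smaller point in the same step: your ergodic averages run over the dilated regions $nW$ (and later $n(Q_D\setminus W)$) rather than cubes, so you should invoke a multiparameter ergodic theorem for regular (Tempelman-type) averaging sequences; this is available for dilations of a fixed bounded Lipschitz set, and in your final structure only finite unions of dyadic cubes are needed, but it should be said. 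With these repairs the proof is complete.
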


The next lemma is an easy consequence.


\begin{lemma} \label{sub-additive-ergodic}
For each $M \in \symm^d$ and $l \in \R$, there exists an event, $\Omega_{l,M}$, of full probability so that 
for every $\eta \in \Omega_{l,M}$ and $W \in \mathcal{L}$, 
\[
\mu(l, M): = \lim_{n \to \infty} \frac{\mu(n W \cap \Z^d, \eta, l, M) }{|n W \cap \Z^d|}
\]
and
\[
\mu^*(l,M) := \lim_{n \to \infty} \frac{\mu^*(n W \cap \Z^d , \eta, l, M) }{|n W \cap \Z^d|}.
\]
Moreover, there exist constants $C:=C_{\eta_{\max},l,M,d}$ and $C^*:=C^*_{\eta_{\min},l,M,d}$ so that
\[
0 \leq \mu(l,M) \leq C
\]
and
\[
0 \leq \mu(l,M)^* \leq C^*.
\]

\end{lemma}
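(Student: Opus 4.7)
The plan is to obtain both limits as applications of the multiparameter subadditive ergodic theorem (Proposition \ref{prop:subadditive}) to the processes $\eta \mapsto \mu(\cdot, \eta, l, M)$ and $\eta \mapsto \mu^*(\cdot, \eta, l, M)$ on $\mathcal{U}_0$. The uniform upper bound $\mu(A, \eta, l, M) \leq C_{\eta_{\max}, l, M, d}\,|A|$ (and its analogue for $\mu^*$) comes for free from Lemma \ref{mu-is-bounded}, whose proof is pointwise in $x \in A$ and so carries over from cubes to any bounded $A \in \mathcal{U}_0$. Stationarity $\mu(A+y, \eta, l, M) = \mu(A, T_y \eta, l, M)$ follows because $\mathcal{L}(\eta, A+y)$ is translation covariant, and the only non-invariant piece, $(q_l+q_M)(\cdot + y) - (q_l + q_M)$, is affine in its argument, so it shifts the supergradient set of $w$ without changing its Lebesgue measure. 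Measurability of $\eta \mapsto \mu(A, \eta, l, M)$ is immediate since $\mu$ is a supremum of countably many $\mathcal{F}$-measurable functions, each depending only on $\eta|_{\bar A}$.

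The main content is subadditivity: if $A \in \mathcal{U}_0$ is the disjoint union of $A_1, \ldots, A_k \in \mathcal{U}_0$, I want $\mu(A, \eta, l, M) \leq \sum_{j=1}^k \mu(A_j, \eta, l, M)$. Fix $w = u - q_l - q_M \in S(A, \eta, l, M)$. Since $\bar A_j \subset \bar A$, the condition defining $\partial^+(w, x, A)$ is stronger than the one defining $\partial^+(w, x, A_j)$, so $\partial^+(w, x, A) \subseteq \partial^+(w, x, A_j)$; combining this with Step 1 of Lemma \ref{MA-rep},
\[
|\partial^+(w, A)| = \sum_{x \in A} |\partial^+(w, x, A)| \leq \sum_{j=1}^k \sum_{x \in A_j} |\partial^+(w, x, A_j)| = \sum_{j=1}^k |\partial^+(w, A_j)|.
\]
To turn this into the desired estimate I need $w|_{\bar A_j} \in S(A_j, \eta, l, M)$, i.e., $u|_{\bar A_j} \in \mathcal{L}(\eta, A_j)$. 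The key combinatorial observation is that any neighbor of a site in $A_j$ that lies outside $A_j$ must lie in $\partial A_j$. Starting from a legal sequence witnessing $u \in \mathcal{L}(\eta, A)$, I would split off the topplings at $\partial A_j$ as the boundary part of a decomposition of $u|_{\bar A_j}$; the remaining topplings occur inside $A_j$ and stay legal for the modified sandpile because chip counts at sites in $A_j$ can only increase when topplings outside $\overline{A_j}$ (which send no chips into $A_j$) are dropped. Taking supremum over $w$ yields subadditivity.

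The analogue for $\mu^*$ is easier: the stabilizing condition $\Delta_{\Z^d} v + \eta \leq 2d - 1$ is pointwise on $A$ and the neighbors of any $x \in A_j$ all lie in $\bar A_j$, so $v|_{\bar A_j} \in \mathcal{S}(\eta, A_j)$ directly, and the analogous subgradient inclusion $\partial^-(w, x, A) \subseteq \partial^-(w, x, A_j)$ together with the subgradient version of Step 1 of Lemma \ref{MA-rep} gives the same conclusion. Applying Proposition \ref{prop:subadditive} twice produces the deterministic constants $\mu(l, M), \mu^*(l, M)$ satisfying the stated bounds, together with full-probability events $\Omega_{l, M}^{\pm}$ on which convergence holds for every Lipschitz domain $W$; setting $\Omega_{l, M} := \Omega_{l, M}^{+} \cap \Omega_{l, M}^{-}$ completes the proof. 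The one real technical hurdle is the restriction argument for $\mathcal{L}$ in the subadditivity step, which rests on the neighbor observation highlighted above.
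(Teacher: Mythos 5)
Your proposal is correct and follows essentially the same route as the paper: apply the multiparameter subadditive ergodic theorem to the processes defining $\mu$ and $\mu^*$, use Lemma \ref{mu-is-bounded} for the bound $0 \leq f(A,\eta) \leq C|A|$, and obtain subadditivity from the pointwise supergradient decomposition of Lemma \ref{MA-rep} together with the fact that a locally legal (resp.\ stabilizing) toppling for $A$ restricts to one for each $A_j$ after reclassifying the topplings on $\partial A_j$ as boundary topplings. Your additional checks of stationarity (the affine shift of $q_l+q_M$ preserving supergradient measure) and measurability, and your monotonicity justification of the restriction step, are details the paper leaves implicit but are consistent with its argument.
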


\begin{proof}
Fix $M$ and $l$ and let $W_n = n W \cap \Z^d$ for given $W \in \mathcal{L}$. We apply Proposition \ref{prop:subadditive} to
\[
f(W_n, \eta) = \sup\{ |\partial^+(w, W_n)| : w \in S(W_n, \eta, l, M)\}. 
\]
Let $\Omega_{l,M}$ be given by Proposition \ref{prop:subadditive} and take $\eta \in \Omega_{l,M}$. 
By Lemma \ref{mu-is-bounded}, $0 \leq f(W_n, \eta) \leq  C |W_n|$.  It remains to check subadditivity for subsets of $\Z^d$.  
Let $A \in \mathcal{U}_0$  and let $A_1, \ldots, A_k$ be pairwise disjoint subsets of $A$ which satisfy $\cup_{j=1}^k A_j = A$.

Let $u$ be a locally legal toppling function for $\eta$ in $A$. For each $A_i$, we can decompose $u$ into illegal topplings on $\partial A_i$ followed by locally legal topplings 
in $A_i$. Hence $u - q_l - q_M \in S(A_i, \eta, l,M)$ for all $A_i$. Moreover, the definition of supergradient
shows that for each $x \in A$ there is an $A_i$ so that
\[
\partial^+(u-q_l-q_M, x, A) \subset \partial^+(u - q_l-q_M, x, A_i),
\]
hence by Lemma \ref{MA-rep} and disjointness of the $A_i$, 
\begin{align*}
|\partial^+(u-q_l-q_M, A)| &= \sum_{x \in A} |\partial^+(u-q_l-q_M, x, A)|  \\
&\leq  \sum_{i=1}^k \sum_{x \in A_i} |\partial^+(u - q_l-q_M, A_i, x)| \\
&= \sum_{i=1}^k  |\partial^+(u - q_l-q_M, A_i)|. 
\end{align*}
Since this holds for any locally legal toppling of $\eta$ in $A$, taking the supremum of both sides implies that 
\[
f(A, \eta) \leq \sum_{i=1}^k f(A_i, \eta), 
\]
which completes the proof. The exact same argument, using the fact that any stabilizing toppling for $A$ is also stabilizing in $A_i$, shows convergence of $\mu^*$.
\end{proof}

In light of Proposition \ref{ABP}, if both $\mu(l,M)$ and $\mu^*(l,M)$ are 0, we have a comparison principle in the limit. This will allow 
us to identify the effective equation; and hence is what we carry out in the next section.

\section{The effective equation} \label{sec:identify}

\subsection{Finding the effective equation}  \label{subsec:identify}
We will identify, for each parabola $M$, the largest real number $l_M$, so that in the limit 
$\mu(l_M,M) = \mu^*(l_M,M) = 0$. This then defines the effective equation $\bar{F}_{\eta}$. To show that such a number 
exists, since $\mu$ is bounded, it suffices to show that $\mu$ is Lipschitz continuous in the limit. In the continuum, 
this is done with an argument that utilizes a certain regularity of concave envelopes of subsolutions which we do not have. 
This difficulty is circumvented by a consequence of the stationarity of $\eta$, Lemma \ref{strict-convexity}. 
We first prove the easier direction of continuity, monotonicity of the curvature. 
 \begin{lemma}\label{monotonicity} 
For $s \geq 0$, 
\[
\mu(B_n, \eta, l + s ,M) \geq \mu(B_n, \eta, l, M).
\]
and 
\[
\mu^*(B_n, \eta, l-s, M) \geq \mu^*(B_n, \eta, l, M). 
\]
\end{lemma}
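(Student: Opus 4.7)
The plan is to exhibit, for each admissible function at parameter $l$, a corresponding admissible function at parameter $l+s$ whose supergradient set has at least as much measure. The key identity is the linearity $q_{l+s}=q_{l}+q_{s}$, which follows immediately from $q_{l}(x)=\tfrac{l}{2d}|x|^{2}$. Thus if $w=u-q_{l}-q_{M}\in S(B_{n},\eta,l,M)$ with $u\in\mathcal{L}(\eta,B_{n})$, then $w':=w-q_{s}=u-q_{l+s}-q_{M}$ uses the same locally legal $u$ and therefore lies in $S(B_{n},\eta,l+s,M)$.

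The main step is a pointwise supergradient inclusion. For each $x\in B_{n}$ and $p\in\partial^{+}(w,x,B_{n})$, I would check that $p-\tfrac{s}{d}x\in\partial^{+}(w',x,B_{n})$ by the direct identity
\[
w'(x)+\bigl(p-\tfrac{s}{d}x\bigr)\cdot(y-x)-w'(y)
=\bigl[w(x)+p\cdot(y-x)-w(y)\bigr]+\bigl[q_{s}(y)-q_{s}(x)-\tfrac{s}{d}x\cdot(y-x)\bigr],
\]
valid for every $y\in\bar{B}_{n}$. The first bracket is nonnegative because $p$ is a supergradient of $w$, and the second is nonnegative because $q_{s}$ is convex (as $s\geq 0$) with $\nabla q_{s}(x)=\tfrac{s}{d}x$. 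Since the map $p\mapsto p-\tfrac{s}{d}x$ is a translation at each fixed $x$, it is measure-preserving, so $|\partial^{+}(w,x,B_{n})|\leq|\partial^{+}(w',x,B_{n})|$. Summing over $x\in B_{n}$ via Lemma \ref{MA-rep} (whose Step 1 only uses boundedness of the underlying set and so transfers verbatim from $Q_{n}$ to $B_{n}$) gives $|\partial^{+}(w,B_{n})|\leq|\partial^{+}(w',B_{n})|$, and taking the supremum over $u$ yields $\mu(B_{n},\eta,l,M)\leq\mu(B_{n},\eta,l+s,M)$.

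The proof for $\mu^{*}$ is completely symmetric: starting from $w^{*}=v-q_{l}-q_{M}\in S^{*}(B_{n},\eta,l,M)$, one sets $w^{**}:=w^{*}+q_{s}=v-q_{l-s}-q_{M}\in S^{*}(B_{n},\eta,l-s,M)$. The analogous identity shows that $p\in\partial^{-}(w^{*},x,B_{n})$ implies $p+\tfrac{s}{d}x\in\partial^{-}(w^{**},x,B_{n})$, using exactly the same convexity of $q_{s}$ to make both bracketed quantities nonnegative. Translation-invariance of Lebesgue measure and summation over $x$ then give $\mu^{*}(B_{n},\eta,l,M)\leq\mu^{*}(B_{n},\eta,l-s,M)$.

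There is no serious obstacle here: the statement is essentially a formal consequence of the fact that perturbing a function by a convex function can only enlarge the Monge--Amp\`ere measure of its concave (resp.\ convex) envelope. The only small point to verify carefully is the sign in the convexity inequality and the applicability of Lemma \ref{MA-rep} on balls, both of which are routine.
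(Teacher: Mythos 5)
Your proposal is correct and follows essentially the same route as the paper: perturb an admissible subsolution by the convex quadratic $q_s$, show pointwise that each supergradient $p$ of $w$ at $x$ translates to a supergradient of $w-q_s$ at $x$ (the paper does this by the same algebra you package as the convexity bracket), and then sum over $x$ via Lemma \ref{MA-rep} and take suprema; the $\mu^*$ case is handled symmetrically in both. The only cosmetic difference is that you keep the normalization $\nabla q_s(x)=\tfrac{s}{d}x$ consistent with the paper's definition of $q_l$, whereas the paper's displayed computation drops the $1/d$ factor, which is immaterial to the conclusion.
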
 

\begin{proof}
Let $w \in S(B_n, \eta, l, M)$ . By Lemma \ref{MA-rep},  it suffices to show 
\[
|\partial^+(w, x, B_n)| \leq |\partial^+(w - q_s, x, B_n)|,
\]
for each $x \in B_n$. Choose $p \in \partial^+(w, x)$, if this is not possible, we are done. 
Then,  for each $y \in \bar{B}_n$, 
\begin{align*}
w(x) +  (p - s x ) \cdot(y-x)  + \frac{1}{2} s (|y|^2 - |x|^2)  &= w(x) + p \cdot(y-x) - s x y \\
&+ s |x|^2 + \frac{1}{2} s |y|^2 - \frac{1}{2} s |x|^2 \\
&\geq w(y) + \frac{1}{2} s  |x - y|^2 \\
&\geq w(y).
\end{align*}
And so rearranging, we get 
\[
w(x) - q_s(x) +  (p - s x) \cdot(y-x)  \geq w(y) -q_s(y), 
\]
meaning $p - sx \in \partial^+(w-q_s, x, B_n)$. Since this holds for all $p \in \partial^+(w,x,B_n)$, this implies 
\[
|\partial^+(w, x, B_n)| \leq |\partial^+(w - q_s, x, B_n)|. 
\]
The proof for $\mu^*$ is identical. 
\end{proof}

In the next lemma, we show that if $\mu$ is strictly positive in the limit, then a subsolution must curve downwards in every direction.

\begin{lemma}\label{strict-convexity}

Suppose that $\alpha := \mu(l_M,M) > 0$. There exists a constant $C := C_{\eta_{\min}, \eta_{\max}, l, M, d}$  so that for each $\eta$ in a set $\Omega_{l,M}$ of full probability and $0 < \beta < 1$ the following holds. There is an $n_0 \in \N$ 
so that for all $n \geq n_0$, there exists $w_n \in S(B_{n}, \eta, l, M)$ so that for each $x_0 \in \{ \Gamma_{w_n} = w_n\} \cap B_{\beta n}$ and $p_0 \in \partial^+(w_n, x_0, B_n)$
\[
w_n(y) \leq w_n(x_0) + p_0 \cdot (y-x) - C \alpha n^2
\]
for all $y \in \partial B_{n}$. An analogous result holds for $\mu^*$ with a sign change.

\end{lemma}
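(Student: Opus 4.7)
The plan is to argue by contradiction, exploiting the positive limit $\alpha = \mu(l_M,M) > 0$ from Lemma \ref{sub-additive-ergodic} together with a discrete analogue of Caffarelli's strict concavity for the concave envelope of a near-maximizer of the supergradient measure.

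\paragraph{Construction of $w_n$.} First I would fix $\eta$ in the full-probability event $\Omega_{l,M}$ provided by Lemma \ref{sub-additive-ergodic}. For each sufficiently large $n$, I pick $w_n^{\#} \in S(B_n, \eta, l, M)$ with $|\partial^+(w_n^{\#}, B_n)| \geq (1 - \epsilon_n)\alpha|B_n|$ for some $\epsilon_n \downarrow 0$. Writing $w_n^{\#} = u - q_l - q_M$ for a legal toppling $u$, I apply Lemma \ref{stabilizing-legal} with boundary data $u$ on $\partial B_n$ to replace $u$ by the legal and stabilizing $h$ that agrees with $u$ on $\partial B_n$, and set $w_n := h - q_l - q_M$. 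By Lemma \ref{stabilizing-legal}, $\partial^+(w_n^{\#}, B_n) \subseteq \partial^+(w_n, B_n)$, so $w_n$ is still near-optimal. Because $h$ is both legal and stabilizing, $-\eta_{\max} \leq \Delta_{\Z^d} h \leq 2d - 1 - \eta_{\min}$, and therefore $|\Delta_{\Z^d} w_n| \leq \lambda$ for a constant $\lambda$ depending only on $\eta_{\min}, \eta_{\max}, l, M, d$. This extra Laplacian control is the reason for the regularization and is exactly what is needed to apply Proposition \ref{harnack}.

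\paragraph{The gap at the boundary.} Suppose, for contradiction, that the gap conclusion fails for this $w_n$: there exist $x_0 \in \{\Gamma_{w_n} = w_n\} \cap B_{\beta n}$, $p_0 \in \partial^+(w_n, x_0, B_n)$, and $y^* \in \partial B_n$ such that $f(y^*) < c\alpha n^2$, where $f(y) := w_n(x_0) + p_0 \cdot (y - x_0) - w_n(y) \geq 0$ and $c$ is a small absolute constant. Since $f \geq 0$, $f(x_0) = 0$, and $|\Delta_{\Z^d} f| \leq \lambda$, Proposition \ref{harnack} controls $f$ quadratically around $x_0$; combining this with the smallness of $f(y^*)$ via a chain of overlapping sub-balls, I would extract a region $T \subset B_n$ of volume proportional to $|B_n|$ on which $f = O(c \alpha n^2)$. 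On $T$, the subsolution $w_n$ is within $O(c\alpha n^2)$ of the affine function $\ell(y) := w_n(x_0) + p_0 \cdot (y - x_0)$, so its concave envelope is nearly affine there. By Lemma \ref{MA-rep}, the contribution of $T$ to $|\partial^+(w_n, B_n)|$ is then $o(|T|)$. Meanwhile, the contribution from $B_n \setminus T$ is bounded by a constant times $(1-\delta)|B_n|$ via the pointwise estimate in Lemma \ref{mu-is-bounded}. Choosing $c$ small enough (and $\delta$ close to $1$, possibly after accounting for many bad contact points), the total supergradient measure of $w_n$ falls strictly below $(1-\epsilon_n)\alpha|B_n|$ for large $n$, contradicting near-optimality.

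\paragraph{Main obstacle.} The technical heart of the argument is quantitatively converting a single small-gap pair $(x_0, y^*)$ into a near-affine region $T$ of proportional volume, without the uniform ellipticity that enables the analogous continuous argument; this requires iterated use of Proposition \ref{harnack} and careful geometric covering. A related delicate point is bounding the supergradient mass contributed by a nearly-affine region — one must show that contact points in $T$ all share approximately the supergradient $p_0$, so their supergradient sets overlap heavily. Once this strict concavity is established for subsolutions, the analogous statement for $\mu^*$ follows by exactly the same argument with sub- and supergradients interchanged and signs reversed, using the second half of Lemma \ref{stabilizing-legal} to regularize.
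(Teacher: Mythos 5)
Your construction of $w_n$ --- taking a near-maximizer and regularizing it through Lemma \ref{stabilizing-legal} so that it is both legal and stabilizing, hence has bounded Laplacian and is amenable to Proposition \ref{harnack} --- is exactly what the paper does. The contradiction argument that follows, however, has two genuine gaps. First, the step that turns a single small gap $f(y^*) < c\alpha n^2$ into a region $T$ of volume proportional to $|B_n|$ on which $f = O(c\alpha n^2)$ is not available with the stated tools. Proposition \ref{harnack} only gives $f(x) \leq C_{\lambda}|x - x_0|^2$, a bound at scale $\lambda n^2$ rather than $c\alpha n^2$; a Harnack chain toward $y^*$ must use balls of radius $r \lesssim (c\alpha/\lambda)^{1/2} n$ (so that the additive term $\lambda r^2$ stays below $c\alpha n^2$), hence has length of order $(\lambda/(c\alpha))^{1/2}$ and accumulates a multiplicative constant of order $C^{(\lambda/(c\alpha))^{1/2}}$, which explodes precisely when you later want to ``choose $c$ small''; moreover $y^* \in \partial B_n$, where interior Harnack balls cannot be centered. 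The clean substitute is to note that $\ell - \Gamma_{w_n}$ (with $\ell$ the supporting plane at $x_0$) is convex, vanishes at $x_0$ and is at most $c\alpha n^2$ at $y^*$, so the envelope is nearly affine along the segment $[x_0, y^*]$ --- but that is a one-dimensional set, and thickening it to proportional volume requires a $C^{1,1}$ bound on the discrete concave envelope at fixed $n$, which is exactly the regularity the author says is unavailable (the paper only obtains Hessian bounds for the limiting envelope).

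Second, and more fundamentally, even granting such a $T$, your mass accounting does not close. Near-optimality controls only the total supergradient mass on $B_n$, and off $T$ the pointwise bound of Lemma \ref{mu-is-bounded} is a constant $K = d^{-d}(\eta_{\max} + \mbox{Tr}(M) + l)^d$ which in general far exceeds $\alpha$; your contradiction therefore needs $|T| \geq (1 - \alpha/K)|B_n|$, i.e.\ $T$ must exhaust almost all of $B_n$, whereas a tube around one segment has small proportional volume (and shrinks as $c \downarrow 0$). Nothing in your argument prevents the supergradient mass from simply concentrating away from $T$. The missing idea --- the heart of the paper's proof --- is to use stationarity and ergodicity a second time: Lemma \ref{sub-additive-ergodic} applied to the sub-ball $B_{\beta n}$ (and, after rescaling, to all macroscopic subdomains) shows the supergradient mass is equidistributed at density close to $\alpha$, so concentration is impossible. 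The paper then rescales the concave envelopes, passes to a subsequential limit $\bar{\phi}$, uses weak convergence of Monge--Amp\`ere measures to see that $\det D^2 \bar{\phi}$ has constant density comparable to $\alpha$, and combines this with $|D^2 \bar{\phi}| \leq C$ (coming from Proposition \ref{harnack} and the bounded Laplacian of $w_n$) to conclude $D^2 \bar{\phi} \leq -C\alpha I$, i.e.\ uniform strict concavity, which yields the $C\alpha n^2$ gap on $\partial B_n$ at every contact point in $B_{\beta n}$. Without this equidistribution input, a deterministic count at fixed $n$ cannot reach the conclusion.
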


\begin{proof}
As $\alpha > 0$, by Lemma \ref{sub-additive-ergodic}, we can choose a set of full probability $\Omega_{l,M}$, so that for every $\eta \in \Omega_{l,M}$ there exists $n_0$ so that for all $n \geq n_0$,  there is $w_n \in S(B_n, \eta, l, M)$ with 
\begin{equation} \label{spreadout}
\frac{\alpha}{2} \leq \frac{ |\partial^+(\Gamma_{w_n},B_{\beta n})|)}{|B_{\beta n}|} \leq \frac{\mu(B_n,l,M)}{|B_{n}|} \leq 2 \alpha
\end{equation}
In light of Lemma \ref{stabilizing-legal}, we can assume 
\[
w_n \in S(B_{n}, \eta, l, M) \cap S^*(B_{n}, \eta, l,M).
\] 
As $|\partial^+(w_n, B_{\beta n})| > 0$, we can find $x_0 \in B_{\beta n}$ with $w_n(x_0) = \Gamma_{w_n}(x_0)$ and $|\partial^+(w_n,x_0)| > 0$. Take $p_0 \in \partial^+(w_n,x_0)$. By a translation and affine transformation, we can suppose $\Gamma_{w_n}(x_0) = 0$, $p_0 = 0$, and $x_0 = 0$. Take $1 > \delta > \beta$. By rescaling and subadditivity, it suffices to show
\begin{equation} \label{strict-concavity}
\Gamma_{w_n}(y) \leq - \alpha C n^2
\end{equation}
for $y \in \partial B_{\delta n}$. Let $\bar{\phi}_n : B_{\delta} \to \R$ be a scaling of the interior of the concave envelope, 
\[
\bar{\phi}_n:= \frac{1}{n^2} \Gamma_{w_n}( [n x]), \mbox{ for $x \in B_{\delta}$}.
\]
As $w_n \in S(B_n, \eta, l, M) \cap S^*(B_n, \eta, l, M)$, we have $|\Delta_{\Z^d} w_n | \leq C$. Hence,  by Proposition \ref{harnack} and the definition of $\Gamma_{w_n}$, $0 \geq \bar{\phi}_n \geq -C$. 

Moreover, as the ball is strictly convex,  $\bar{\phi}_n$ is uniformly Lipschitz in $B_{\delta}$ and hence contains a subsequence which converges uniformly to a concave, continuous function $\bar{\phi}$ (Lemma 1.6.1 in \cite{gutierrez2016monge}). By taking a further subsequence, $\bar{w}_n := \frac{1}{n^2} w_n([n x])$ also converges uniformly to a limit $\bar{w}$. As $\bar{\phi}$ is the concave envelope of $\bar{w}$, it is differentiable 
with Lipschitz gradient and $|D^2 \bar{\phi}| \leq C$ almost everywhere (Lemma 3.3 and Lemma 3.5 in \cite{roberts1995fully}). By Lemma \ref{sub-additive-ergodic} 
and weak convergence of Monge-Amp{\`e}re measures (Lemma 1.6.1 in \cite{gutierrez2016monge}) the subsequential limit, $\bar{\phi}$, must solve a Monge-Amp{\`e}re equation with constant right-hand side $-\alpha$. Hence, $\det D^2 \bar{\phi} = -\alpha$ and in turn $D^2 \bar{\phi} \leq -C \alpha$ almost everywhere. Taking $n_0$ larger if necessary and undoing the scaling,
we have \eqref{strict-concavity}.

\end{proof}

We next use Lemma \ref{strict-convexity} to show Lipschitz continuity of $\mu$.
\begin{lemma}\label{continuity} 
There is a constant $C_{\eta_{\min}, \eta_{\max}, l, M, d}$ so that for all $0 < s < 1$,
\[
\mu(l, M) \leq  \mu(l-s, M)+  s C
\]
and
\[
\mu^*(l,M) \geq \mu^*(l+s,M) + sC. 
\]
\end{lemma}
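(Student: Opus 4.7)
The plan is to prove the first inequality; the second is symmetric. By Lemma \ref{monotonicity} I already have $\mu(l-s, M) \leq \mu(l, M)$, so the content is to bound the gap by $sC$. The strategy is a perturbation of near-extremal subsolutions. Given $w \in S(B_n, \eta, l, M)$, writing $w = u - q_l - q_M$ for some $u \in \mathcal{L}(\eta, B_n)$ and using the identity $q_l = q_{l-s} + q_s$, the shifted function $\tilde w := w + q_s = u - q_{l-s} - q_M$ belongs to $S(B_n, \eta, l-s, M)$ via the same underlying legal toppling $u$. It therefore suffices to show that for a near-extremal sequence $w_n$,
\[
|\partial^+(\tilde w_n, B_n)| \geq |\partial^+(w_n, B_n)| - sC|B_n|,
\]
and to pass to the limit using Lemma \ref{sub-additive-ergodic}.

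The key tool is a concave envelope sandwich. Since $-q_s$ is concave, $\Gamma_{\tilde w_n} - q_s$ is a concave function dominating $\tilde w_n - q_s = w_n$, so $\Gamma_{\tilde w_n} \geq \Gamma_{w_n} + q_s$. When $\Gamma_{w_n} + q_s$ is itself concave on a region, it provides a concave majorant of $\tilde w_n$, forcing the reverse inequality and hence the equality $\Gamma_{\tilde w_n} = \Gamma_{w_n} + q_s$ there. On such a region, the change in Monge-Amp{\`e}re density is controlled by the elementary determinant expansion
\[
\det\bigl(-D^2\Gamma_{w_n} - (s/d)I\bigr) \geq \det\bigl(-D^2\Gamma_{w_n}\bigr) - sC,
\]
with $C$ depending on the Hessian bound $|D^2 \Gamma_{w_n}| \leq C$ coming from the uniform Laplacian bound on the locally legal, stabilizing toppling (as used already in the proof of Lemma \ref{strict-convexity}).

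To ensure concavity of $\Gamma_{w_n} + q_s$, I dichotomize. If $\mu(l, M) \leq C_0 s$ for a suitable constant, the inequality is immediate from monotonicity and $\mu \geq 0$. Otherwise, apply Lemma \ref{strict-convexity} with $\alpha = \mu(l, M) > 0$. Its proof extracts a rescaled subsequential limit $\bar\phi$ of $\Gamma_{w_n}$ which is concave, has bounded Laplacian, and solves $\det D^2\bar\phi = -\alpha$; combining these forces $D^2 \bar\phi \leq -C\alpha I$, and unscaling gives $D^2 \Gamma_{w_n} \leq -C\alpha I$ on the contact set in $B_{\beta n}$. Choosing $C_0$ large enough that $C\alpha > s/d$ makes $\Gamma_{w_n} + q_s$ concave on that contact set. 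Integrating the determinant estimate over contact points via Lemma \ref{MA-rep}, then sending $n \to \infty$ and $\beta \to 1$, yields $\mu(l-s, M) \geq \mu(l, M) - sC$.

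The main obstacle is that Lemma \ref{strict-convexity} as stated only produces a $w_n$ whose supergradient density in $B_{\beta n}$ is bounded below by $\alpha/2$ rather than the full $\alpha$, so a direct application would lose a factor of two in the Lipschitz constant. Closing this gap requires either a strengthening of Lemma \ref{strict-convexity} to density $\alpha(1-o(1))$ as $\beta \uparrow 1$ (via Lebesgue differentiation of the Monge-Amp{\`e}re measure of $\Gamma_{w_n}$ along a suitable near-extremal sequence), or a Vitali-type covering of $B_n$ by smaller balls on which the strict-concavity argument is iterated and glued back together using the subadditivity from Lemma \ref{sub-additive-ergodic}. The symmetric argument for $\mu^*$ follows by replacing super- with sub-differentials and $+q_s$ with $-q_s$.
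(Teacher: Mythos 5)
Your setup coincides with the paper's: shifting by $q_s$ so that $\tilde w := w + q_s = u - q_{l-s} - q_M \in S(B_n,\eta,l-s,M)$, and aiming at $|\partial^+(\tilde w_n, B_n)| \geq |\partial^+(w_n,B_n)| - sC|B_n|$, is exactly the right reduction, and the sandwich $\Gamma_{\tilde w_n} \geq \Gamma_{w_n} + q_s$ is correct. The gap is in your mechanism for the quantitative estimate. You cannot ``unscale'' the limit bound $D^2\bar\phi \leq -C\alpha I$ into $D^2\Gamma_{w_n} \leq -C\alpha I$ at finite $n$: uniform convergence gives no control on second derivatives, and in fact $\Gamma_{w_n}$ is piecewise affine (it is the concave envelope of finitely many lattice values, Proposition \ref{prop:concave_rep}), so $D^2\Gamma_{w_n} = 0$ almost everywhere and its Monge--Amp{\`e}re measure is purely atomic at contact points. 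Consequently $\Gamma_{w_n} + q_s$ is strictly convex on every open piece where $\Gamma_{w_n}$ is affine and is never concave on a region, so the claimed equality $\Gamma_{\tilde w_n} = \Gamma_{w_n} + q_s$ and the pointwise determinant expansion $\det(-D^2\Gamma_{w_n} - (s/d)I) \geq \det(-D^2\Gamma_{w_n}) - sC$ have nothing to act on; the argument fails precisely at the step that was supposed to produce the $sC$ bound. Retreating to the limit $\bar\phi$ does not repair this by itself, because one must still show that the slopes of $\bar\phi + q_s$ are realized as supergradients of the discrete functions $\tilde w_n$ at interior points of $B_n$, and that is a boundary-domination issue, not a pointwise Hessian issue.

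The paper's proof uses Lemma \ref{strict-convexity} through its stated conclusion (the plane drop at $\partial B_n$), not through any Hessian bound. Assuming $\mu(l,M) > sC$ (your trivial-case dichotomy is the same), it takes a near-extremal $w_n$ from Lemma \ref{strict-convexity} with $\beta = 1-s$: for every contact point $x \in B_{(1-s)n}$ and $p \in \partial^+(w_n,x)$, the supporting plane exceeds $w_n$ on $\partial B_n$ by $C\alpha n^2$, which beats the added quadratic $q_{sC} \leq \tfrac{sC}{2d} n^2$ there; the shifting argument from Lemma \ref{ABP} then places the touching point of that plane with $w_n + q_{sC}$ in the interior, giving the slope-set containment $\partial^+(w_n, B_{(1-s)n}) \subseteq \partial^+(w_n + q_{sC}, B_n)$, while the shell $B_n \setminus B_{(1-s)n}$ carries at most $sC|B_n|$ of supergradient mass by the pointwise bound of Lemma \ref{mu-is-bounded} together with Lemma \ref{MA-rep}; taking limits via Lemma \ref{sub-additive-ergodic} (and renaming constants) gives the Lipschitz estimate. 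Note this also dissolves the obstacle you flagged: no lower bound $\alpha/2$ versus $\alpha$ on the supergradient density enters, only near-extremality of $w_n$ for the level-$l$ problem and the boundary plane drop, so no strengthening of Lemma \ref{strict-convexity} and no Vitali covering is needed.
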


\begin{proof}
Let $0 < s < 1$ be given. Take $\beta = (1-s)$ and let $C$, $\eta \in \Omega_{l,M}$, and $n \geq n_0$ be given by Lemma \ref{strict-convexity}. Assume $\mu(l,M) > s C$. We will show that after removing a shell of volume proportional to $s$, the set of slopes remaining must be in $\partial^+(w_n + q_s, B_n)$ for all $w_n$ close to achieving the supremum in  $\mu(B_{n}, \eta, l, M)$.

By Lemma \ref{strict-convexity}, there is $w_n \in S(B_{n}, \eta, l, M)$ so that for every $x \in B_{(1-s) n}$ with $\Gamma_{w_n}(x) = w_n(x)$ 
and $p \in \partial^+(w_n, x)$ 
\begin{equation}
w_n(x) + p \cdot(y -x)  \geq w_n(y) + q_{s C}(y), 
\end{equation}
for all $y \in \partial B_{n}$. 
Hence, the argument in the proof of Lemma \ref{ABP} shows that $p \in \partial^+(w_n + q_{s C},B_n)$
and since this applies for all such $p$, 
\[
\partial^+(w_n, B_{(1-s)n}) \subseteq \partial^+(w_n + q_{sC}, B_n) 
\]
Further, using Lemma \ref{mu-is-bounded}, 
\[
|\partial^+(w_n, B_n \backslash B_{(1-s)n})| \leq   s C |B_n|, 
\]
which completes the proof after taking limits.
\end{proof}

The above results show Lipschitz continuity of $\mu$ for each fixed $l \in \R$. Repeating this for every rational $l$ 
in the interval specified by Lemma \ref{mu-is-bounded} and using the intermediate value theorem, we can choose the largest $l_M \in \R$ so that in the limit, 
\[
\mu(l_M, M) = \mu^*(l_M,M),
\]
then define the {\it effective equation} uniquely as
\[
\bar{F}_{\eta}(M) = l_M. 
\]

\subsection{Basic properties of the effective equation}

Here we show that the effective equation is bounded, degenerate elliptic, and Lipschitz continuous. This together with 
the fact any legal stabilizing toppling function has bounded Laplacian will be used in Section \ref{subsec:proof_of_theorem} to establish a comparison principle for solutions to the effective equation. 

\begin{lemma} \label{lemma:basic_effective}
For every $M, N \in \symm^d$, the following hold. 
\begin{enumerate}
\item
Degenerate elliptic:   If $M \leq N$, $\bar{F}_{\eta}(M) \geq \bar{F}_{\eta}(N)$. 
\item Lipschitz continuous:  $|\bar{F}_{\eta}(M) - \bar{F}_{\eta}(N)| \leq C |M-N|_2$. 
\item Bounded:  $|\bar{F}_{\eta}(M)| < \infty$. 
\end{enumerate}

\end{lemma}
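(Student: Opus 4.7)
All three properties will follow from the bounds already in hand together with a monotonicity of $\mu$ and $\mu^*$ in the matrix parameter $M$ that parallels the $l$-monotonicity in Lemma \ref{monotonicity}.

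Boundedness is immediate. Lemma \ref{mu-is-bounded} passed to the limit gives $\mu(l, M) = 0$ for $l \leq -\eta_{\max} - \Tr(M)$ and $\mu^*(l, M) = 0$ for $l \geq (2d-1) - \eta_{\min} - \Tr(M)$, so the defining property of $l_M$ forces
\[
l_M \in [-\eta_{\max} - \Tr(M),\ (2d-1) - \eta_{\min} - \Tr(M)],
\]
hence $|\bar{F}_\eta(M)| < \infty$.

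For degenerate ellipticity, assume $M \leq N$. Writing $u = w + q_l + q_M$ and using $q_M + q_{N-M} = q_N$, any $w \in S(B_n, \eta, l, M)$ satisfies $w - q_{N-M} = u - q_l - q_N \in S(B_n, \eta, l, N)$. For $p \in \partial^+(w, x, B_n)$, the exact expansion
\[
q_{N-M}(y) - q_{N-M}(x) = (N-M)x \cdot (y-x) + \tfrac{1}{2}(y-x)^T(N-M)(y-x)
\]
together with $N - M \geq 0$ gives
\[
(w - q_{N-M})(x) + (p - (N-M)x) \cdot (y-x) \geq (w - q_{N-M})(y)
\]
for all $y \in \bar{B}_n$, so $p - (N-M)x \in \partial^+(w - q_{N-M}, x, B_n)$. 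Translation preserves Lebesgue measure, so per-point $|\partial^+(w, x, B_n)| \leq |\partial^+(w - q_{N-M}, x, B_n)|$; summing over $x$ via Lemma \ref{MA-rep} and taking the supremum over $w$ yields $\mu(l, M) \leq \mu(l, N)$. The symmetric subgradient computation (which now uses the sign $-\tfrac{1}{2}(y-x)^T(N-M)(y-x) \leq 0$) produces $\mu^*(l, M) \geq \mu^*(l, N)$. The right endpoint of $\{l : \mu(l, M) = 0\}$ is therefore nonincreasing in $M$ and the left endpoint of $\{l : \mu^*(l, M) = 0\}$ is also nonincreasing, so the right endpoint $l_M$ of their overlap satisfies $\bar{F}_\eta(M) \geq \bar{F}_\eta(N)$.

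For Lipschitz continuity, I use the identity $q_l(x) = \tfrac{l}{2d}|x|^2$, which for every $s \in \R$ gives $q_l + q_M = q_{l+sd} + q_{M-sI}$ and hence $S(B_n, \eta, l, M) = S(B_n, \eta, l+sd, M-sI)$ (and likewise for $S^*$). Passing to the limit, $\mu(l, M) = \mu(l+sd, M-sI)$ and $\mu^*(l, M) = \mu^*(l+sd, M-sI)$, so the definition of $l_M$ yields $\bar{F}_\eta(M + sI) = \bar{F}_\eta(M) - sd$ for all $s \in \R$. For general $N$, set $s = |M - N|_2$ so that $M - sI \leq N \leq M + sI$ in the psd order; combined with degenerate ellipticity,
\[
\bar{F}_\eta(M) - sd = \bar{F}_\eta(M + sI) \leq \bar{F}_\eta(N) \leq \bar{F}_\eta(M - sI) = \bar{F}_\eta(M) + sd,
\]
which is Lipschitz continuity with constant $d$. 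The only real bookkeeping challenge is tracking the sign of the quadratic error in the super- versus subgradient shift (these go in opposite directions for $\mu$ and $\mu^*$); beyond that, all three claims are short algebraic consequences of the machinery already built for Lemmas \ref{monotonicity} and \ref{continuity}.
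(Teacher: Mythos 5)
Your route is essentially the paper's: boundedness from Lemma \ref{mu-is-bounded}, degenerate ellipticity by running the perturbation argument of Lemma \ref{monotonicity} with $q_{N-M}\geq 0$ in place of $q_s$ to get monotonicity of $\mu$ and $\mu^*$ in the matrix slot, and Lipschitz continuity by trading a multiple of the identity against the scalar parameter and invoking ellipticity. Your normalization bookkeeping is in fact more careful than the paper's: the exact identity $\mu(l,M)=\mu(l+sd,M-sI)$, hence $\bar{F}_{\eta}(M+sI)=\bar{F}_{\eta}(M)-sd$ and Lipschitz constant $d$, is a clean way to finish part (2) (the paper's displayed identity silently conflates $q_{sI}$ with $q_s$, which is harmless since its statement only claims a constant $C$).

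The one step that does not follow from what is available at this point is identifying $l_M$ with ``the right endpoint of the overlap of $\{l:\mu(l,M)=0\}$ and $\{l:\mu^*(l,M)=0\}$.'' By definition $l_M$ is the largest $l$ at which $\mu(l,M)=\mu^*(l,M)$; that the common value is $0$ (equivalently, that the two zero rays meet at all) is exactly Lemma \ref{flat}, which is proved later and is the hardest part of the paper. At this stage Lemma \ref{mu-is-bounded} only gives $\mu=0$ for $l\leq -\eta_{\max}-\Tr(M)$ and $\mu^*=0$ for $l\geq (2d-1)-\eta_{\min}-\Tr(M)$, and nothing yet guarantees these sets overlap. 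The gap is easy to close without Lemma \ref{flat}: by maximality of $l_M$ and the $l$-monotonicity of Lemma \ref{monotonicity}, $\mu(l,M)>\mu^*(l,M)$ for every $l>l_M$; combined with the matrix monotonicity you proved, for $N\geq M$ and $l>l_M$ one has $\mu(l,N)\geq\mu(l,M)>\mu^*(l,M)\geq\mu^*(l,N)$, so no equality point for $N$ lies above $l_M$ and hence $l_N\leq l_M$. With that substitution your parts (1)--(3) all go through; the same caveat applies to reading the interval of Lemma \ref{mu-is-bounded} as containing $l_M$ for part (3), which is fine if, as the paper intends, $l_M$ is chosen within that interval.
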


\begin{proof}
We show the first inequality. Suppose $N = M + A$ with $A \geq 0$. 
The proof of Lemma \ref{monotonicity}, using $q_A \geq 0$ in place of $q_s \geq 0$, shows that 
$\mu(l_M, M+A) \geq \mu(l_M, M)$ and $\mu^*(l_M, M+A) \leq \mu(l_M, M)$.
By Lemma \ref{monotonicity}, $f(s) := \mu(l_M+s,M+A) - \mu^*(l_M+s,M+A)$, is nondecreasing in $s$
and we have just showed $f(0) \geq 0$.  Hence, $l_{M+A} \leq l_M$ and so $\bar{F}_{\eta}(M+A) \leq \bar{F}_{\eta}(M)$.

For the second inequality, first rewrite, 
\[
\mu(l_M, M) = \mu(l_M, N + (M-N)) = \mu(l_M - |M-N|_2, N + (M-N) + |M-N|_2 I ), 
\]
then observe that $(M-N) + |M-N|_2  I \geq 0$. Hence, by the argument in the first paragraph, 
$\mu(l_M, M) \geq \mu(l_M - |M-N|_2, N)$
and so 
\[
\mu^*(l_M - |M-N|_2, N) \geq \mu^*(l_M,M) = \mu(l_M, M) \geq \mu(l_M - |M-N|_2, N).  
\]
and hence 
\[
\bar{F}_{\eta}(N) \geq \bar{F}_{\eta}(M) - |M-N|_2.
\]
Swapping the roles of $M$ and $N$ then show  
\[
|\bar{F}_{\eta}(M) - \bar{F}_{\eta}(N)|  \leq |M-N|_2. 
\]
The third claim follows by construction and Lemma \ref{mu-is-bounded}.

\end{proof}

\section{Proof of the Theorem}\label{sec:proof_of_theorem}
For each $n \in \N$, recall that
\[
v_n = \min \{ v : \Z^d \to \N: \Delta_{\Z^d} v_n + \eta I(\cdot \in W_n) \leq 2 d-1 \}, 
\]
is the odometer function for $\eta$ on $W_n$ with the {\it free} boundary condition and $\bar{v}_n = n^{-2} v_n([n x])$ is its rescaled linear interpolation.  
We start by showing that $\bar{v}_n$ is equicontinuous and bounded. Then, we show that the high density assumption, $\E(\eta(0))  > 2 d -1$, 
implies $v_n \geq 1$ in $W_{n - o(n)}$, enabling an essential tool in the proof of Lemma \ref{flat}, (Dhar's burning algorithm, Lemma \ref{burning}). 
We then conclude by showing that every scaled subsequence converges to the same limit. 

\subsection{An upper bound on the odometer function} \label{subsec:compactness}
We establish an upper bound on $\bar{v}_n$ by constructing a toppling function 
which stabilizes $\eta_{\max}$ and hence $\eta$. Since $\eta_{\max}$ is constant, we can stabilize by toppling 
\lq one dimension at a time\rq, a trick from \cite{ fey2010growth}, and restated 
below for the reader. (Note one could also compare to the divisible sandpile as in \cite{levine2009strong} to get a tighter bound).
\begin{lemma}[Lemma 3.3 in \cite{ fey2010growth}] \label{lemma:one_dim}
	Let $\ell \in \N$ be given. Pick $k \in \N$ so that $R_k := \eta_{\max} - (2d-k) = 2 r $ for some $r \in \N$. Then, 
	there exists $g: \Z \to \N$ so that 
	\[
	\Delta_{\Z^d} g = f, 
	\]
	where  $f: \Z \to \Z$ is given by 
	\[
	f(x) = 
	\begin{cases} 
	-R_k &\mbox{ for $|x| \leq \ell$ }  \\
	2 &\mbox{ for $\ell < |x| \leq \ell(r + 1) $} \\
	1 &\mbox{ for $\ell(r+1)  <  |x| \leq \ell(r+1) + r$} \\
	0 &\mbox{ for $ \ell(r+1) + r< |x|$}
	\end{cases}
	\]
	Moreover, $g$ is supported in $I = \{ x \in \Z: |x| < \ell(r+1) + r\}$ and there exists $C:= C_{r}$ for which 
	\begin{equation} \label{quadratic bound}
	g(x) \leq C x^2. 
	\end{equation}
\end{lemma}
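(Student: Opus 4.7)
The plan is to construct $g$ explicitly by two discrete integrations of $f$, and then verify each claimed property by direct calculation.

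First I would check the compatibility condition $\sum_{x \in \Z} f(x) = 0$, which is the necessary and sufficient condition for the one-dimensional discrete Poisson equation $\Delta_{\Z} g = f$ to admit a compactly supported solution. Using $R_k = 2r$, the central contribution is $-(2\ell+1) R_k = -(4\ell r + 2r)$ and the outer positive contributions sum to $4 \ell r + 2r$, so the total is zero. I would then introduce the forward-difference antiderivative
\[
u(x) \;:=\; \sum_{y \leq x} f(y),
\]
and define $g$ by summing $u$ backwards from the right endpoint of the putative support:
\[
g(x) \;:=\; -\sum_{y=x}^{\ell(r+1)+r-1} u(y) \quad \mbox{for } x < \ell(r+1)+r, \qquad g(x) \;:=\; 0 \mbox{ otherwise}.
\]
Telescoping gives $\Delta_{\Z} g(x) = u(x) - u(x-1) = f(x)$. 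The symmetry $f(-x) = f(x)$ implies $u(-x-1) = -u(x)$, which in turn yields $g(x) = g(-x)$ and forces $g$ to vanish for $|x| \geq \ell(r+1)+r$, proving the support claim.

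The next step is to verify $g \geq 0$ and $g$ is integer-valued. A region-by-region computation shows that $u$ is piecewise linear and strictly negative on $0 \leq x < \ell(r+1)+r$, attaining its minimum $-r(2\ell+1)$ at $x = \ell$ and vanishing at $x = \ell(r+1)+r$. Since $g$ is defined by backward summation and $g(\ell(r+1)+r) = 0$, it follows that $g$ is strictly decreasing on the nonnegative half-line and takes values $\geq 0$ everywhere. Integrality is preserved because $f$ is integer-valued and the choice $R_k = 2r$ keeps parity clean throughout the telescoping.

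Finally, the quadratic bound $g(x) \leq C x^2$ follows from the piecewise-quadratic formula for $g$ obtained by explicit summation of the piecewise-linear $u$: on each of the four regions, $g$ is a polynomial of degree at most two in $x$ whose leading coefficient is controlled by $r$ alone. The main obstacle is purely computational, namely the bookkeeping of signs and endpoints across the four regions; the single essential structural point is that $R_k = 2r$ is tuned precisely so that the negative mass of $f$ in the central block exactly balances the positive mass distributed in the outer shells, which is what makes the twice-summed function simultaneously compactly supported, nonnegative, and integer-valued.
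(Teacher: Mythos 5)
Your construction is essentially a self-contained replacement for the citation: the paper does not prove this lemma at all but imports it from Fey--Levine--Peres, so the double-summation argument is welcome, and most of it checks out. The zero-sum verification, the antiderivative $u(x)=\sum_{y\le x}f(y)$, the telescoping identity $\Delta_{\Z}g=f$, the antisymmetry $u(-x-1)=-u(x)$ (which needs both $f(-x)=f(x)$ and the zero total mass, as you have), the resulting symmetry and exact support $I$, and the sign analysis of $u$ giving $g\ge 0$ are all correct. One small remark: integrality is automatic because you are summing integers; no parity bookkeeping is involved (the evenness $R_k=2r$ only enters through the definition of $f$ and the mass balance).

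The genuine gap is the last step. From your construction the bound $g(x)\le C_r x^2$ does not follow, and in fact it fails for the function you built: since two compactly supported solutions of $\Delta_{\Z}g=f$ differ by an affine function, your $g$ is the unique one with the stated support, and your own formulas give $g(0)=-\sum_{y=0}^{N-1}u(y)\ge \sum_{y=0}^{\ell}r(2y+1)=r(\ell+1)^2>0$, whereas $C_r\cdot 0^2=0$. The observation that each piece of $g$ is a quadratic ``whose leading coefficient is controlled by $r$ alone'' does not control the constant and linear terms, which carry the $\ell$-dependence. What your construction does yield, by summing $|u|\le r(2\ell+1)$ over the support (of length at most $2(\ell(r+1)+r)+1$), is $0\le g\le C_r\ell^2$, equivalently $g(x)\le C_r x^2$ only for $|x|\ge \ell$, i.e. $g(x)\le C_r\max(|x|,\ell)^2$. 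That is the bound actually needed in Lemma \ref{compactness}, where $\ell$ is taken proportional to $n$ and only $v_n\le Cn^2$ is used; so you should either prove and state the estimate in that form and note it suffices for the application, or explicitly record that the displayed inequality \eqref{quadratic bound} must be read with the constant absorbing the scale $\ell$ near the origin rather than as a pointwise bound with $C=C_r$.
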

We use this technique together with the universal bound on the Laplacian to show compactness of $\bar{v}_n$. 
\begin{lemma}\label{compactness}
For every subsequence $n_k \to \infty$ there is a subsequence $n_{k_j}$
and a function $\bar{v} \in C(\R^d)$ so that $\bar{v}_{n_{k_j}} \to \bar{v}$
uniformly as $j \to \infty$. 
\end{lemma}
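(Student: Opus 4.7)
The plan is to establish both pieces of Arzelà–Ascoli (uniform boundedness and uniform equicontinuity) on any compact set, then extract a uniformly convergent subsequence by a diagonal argument. Both bounds will come from a single explicit barrier constructed via the one-dimensional toppling trick.

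First, I would build a deterministic supersolution that stabilizes the worst case $\eta_{\max}\cdot\mathbf{1}_{W_n}$. Apply Lemma \ref{lemma:one_dim} with $k=1$ (adjusting $\eta_{\max}$ up by at most 1 so that $R_1=\eta_{\max}-(2d-1)$ is even) and $\ell$ chosen just large enough that $W_n\subset\{|x_1|\le\ell\}$, e.g.\ $\ell=n$. Let $g$ be the resulting 1-D function and set $G(x_1,\ldots,x_d):=g(x_1)$, so that $\Delta_{\Z^d}G(x)=f(x_1)$. Inside $W_n$ we have $\Delta_{\Z^d}G+\eta\le -R_1+\eta_{\max}=2d-1$; outside $W_n$, where $\eta=0$, we have $\Delta_{\Z^d}G\le 2\le 2d-1$ since $d\ge 2$. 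Thus $G$ is a stabilizing toppling function for $\eta\,\mathbf{1}_{W_n}$, so by the least-action principle (Proposition \ref{prop:least_action}(2)) $v_n\le G$ on $\Z^d$. The quadratic bound \eqref{quadratic bound} gives $v_n(y)\le C y_1^2\le C|y|^2$, and also that $v_n$ is supported in a ball of radius $R_0 n$ for some deterministic $R_0$. Rescaling, $\bar v_n(x)\le C|x|^2$, which is the desired uniform boundedness on compact sets.

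Next I would derive a uniform Lipschitz estimate on $\bar v_n$. Uniform boundedness of $\eta$ combined with the sandpile PDE yields $\Delta_{\Z^d}v_n\le 2d-1-\eta\le C$ everywhere and, using that $v_n$ is legal so that $\Delta_{\Z^d}v_n+\eta\ge \min(0,\eta)$, also $\Delta_{\Z^d}v_n\ge -C$. So $|\Delta_{\Z^d}v_n|\le C$ on all of $\Z^d$. Inside $B_{2R_0 n}$ we have the two-sided bound $0\le v_n\le C n^2$, and outside $B_{R_0 n}$ we have $v_n\equiv 0$. A standard discrete interior gradient estimate for functions with bounded Laplacian (obtained by subtracting a quadratic to reduce to the discrete harmonic case and invoking discrete Harnack / the random walk representation) then gives $|v_n(x+e_i)-v_n(x)|\le C n$ for every nearest-neighbor pair. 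Under the rescaling $\bar v_n(x)=n^{-2}v_n([nx])$ (with multilinear interpolation), neighboring grid values of $\bar v_n$ differ by at most $Cn\cdot n^{-2}=C/n$ over a distance $1/n$, yielding a Lipschitz constant $\le C$ independent of $n$ on every compact subset of $\R^d$.

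With $\bar v_n$ uniformly bounded and uniformly Lipschitz on each ball $B_R\subset\R^d$, Arzelà–Ascoli produces a uniformly convergent subsequence on $B_R$. A standard diagonal extraction over $R\in\N$ then yields a subsequence converging uniformly on compact subsets of $\R^d$ to some $\bar v\in C(\R^d)$, proving the lemma. The main obstacle is really the barrier construction in the first paragraph: one has to check the inequality $\Delta_{\Z^d} G+\eta\le 2d-1$ both where $\eta$ is near $\eta_{\max}$ and where $\eta=0$, which is exactly what the one-dimensional function $f$ in Lemma \ref{lemma:one_dim} was designed to accommodate. The discrete gradient estimate is classical and causes no trouble once the quadratic bound on $v_n$ is in hand.
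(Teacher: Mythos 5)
Your overall strategy is the paper's: build a stabilizing barrier from the one-dimensional toppling function of Lemma \ref{lemma:one_dim}, invoke the least action principle to get a quadratic upper bound on $v_n$, use the two-sided bound on $\Delta_{\Z^d}v_n$ for a discrete Lipschitz/equicontinuity estimate, and finish with Arzel\`a--Ascoli. The equicontinuity step you sketch (subtracting a quadratic and using interior gradient estimates for bounded-Laplacian functions) is exactly the content the paper delegates to its citation, so that part is fine.

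There is, however, one concrete overreach in your first paragraph. Your barrier $G(x)=g(x_1)$ depends on a single coordinate, so $v_n\le G$ only confines $v_n$ to the slab $\{|x_1|\le C n\}$; it does not give the claim that ``$v_n$ is supported in a ball of radius $R_0 n$,'' since $g(x_1)$ does not vanish for large $|x_2|,\dots,|x_d|$. Without that uniform compact support you only obtain locally uniform convergence along a subsequence, whereas the lemma asserts uniform convergence on $\R^d$ (and the compact support of the limits is used later, e.g.\ in the proof of Theorem \ref{thetheorem} where $v=v'=0$ outside some $B_R$). The fix is exactly the paper's device: set $G_i(x)=g(x_i)$ for every $i=1,\dots,d$ (with $\ell\sim n$ chosen so a cube of side $C_{d,W}n$ covers $W_n$); each $G_i$ is stabilizing by the same computation you did for $G_1$, so the least action principle gives $v_n\le G_i$ for each $i$ separately, hence $v_n\le\min_i G_i$, which is supported in a cube of side $Cn$ and still satisfies the quadratic bound. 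With that correction your argument matches the paper's proof.
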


\begin{proof}

Cover $W_n$ with a box of side length $C_{d, W} n$ for some $C_{d, W} \in \N$. Choose $g$ from Lemma \ref{lemma:one_dim} with $\ell = C_{d, W} n$ and 
for $x = (x_1, \ldots, x_d) \in \Z^d$, define 
\[
u_i(x) = g(x_i), 
\]
and observe that by definition of $g$, $\Delta_{\Z^d} u_i+ \eta_{\max} \leq 2 d -1$.  Hence, by the least action principle, as $\min(u_1, \ldots, u_d)$ is also stabilizing, 
\[
v_n(x) \leq \min(u_1(x), \ldots, u_d(x)) \leq C_d |x|^2.
\]
Hence, $\bar{v}_n \leq C_d$ and is supported in $Q_{C_{d,W}}$.  We have equicontinuity since $|\Delta_{\Z^d} v_n| \leq C_{d, \eta_{\min}, \eta_{\max}}$ (\cite{kuo2005estimates}). 
The Arzela-Ascoli theorem now implies the claim. 
\end{proof}

\subsection{A lower bound on the odometer function}
In this subsection, we use a comparison principle argument to show that on an event of probability 1,  $v_n \geq 1$ in $W_{n - o(n)}$.
As a corollary, this argument gives a quantitative proof of the (now classical) fact that if $\E(\eta(0)) > 2 d -1$ then $\eta$ is almost surely {\it exploding}, (see \cite{fey2009stabilizability}). 
The technique takes inspiration from Theorem 4.1 in \cite{levine2009strong}. In essence, the proof is a comparison of $v_n$ with the odometer function for the random divisible sandpile with threshold $2 d-1$. See Section \ref{sec:divisible_sandpile} for more on the random divisible sandpile, including a proof of convergence which uses 
Lemma \ref{divisible_homogenize}. 

We start by briefly recalling the Green's function for simple random walk on $\Z^d$ stopped when exiting the ball .
Let $S_n^{(x)}$ be simple random walk started at a site $x$ in $\Z^d$ and let $\tau_n = \min\{t \geq 0: S_{t} \not \in B_n \}$.
Let 	
\[
g_n(x,y) = \frac{1}{2d } \E \sum_{n=0}^{\tau_{n}-1} 1\{S_n^{(x)} = y\}, 
\]
Fix $\delta > 0$ and $x \in B_n$. From \cite{lawler2010random}, we have the following exit time estimates, 
\begin{equation} \label{exit_time}
\begin{aligned}
\sum_{y \in B_n} g_n(x,y)  &= O(n^2) \\ 
\sum_{z \in B_{\delta n}} g_n(x,x+z) &= \delta O(n^2)
\end{aligned}
\end{equation}
and the following difference estimates, for $\max(|x|,|y|) < (1-\delta) n^2$,
\begin{equation} \label{difference}
|g_n(x,y) - g_n(x,y + e_i)| = O(|x-y|^{1-d}) + O_{\delta}(n^{2-2d}).
\end{equation}
Next, define for each $n$ 
\[
r_n(x) := \sum_{y \in B_{n}} g_{n}(x,y) \eta(y),
\]
\[
d_n(x) :=  \sum_{y \in B_{n}} g_{n}(x,y) \E(\eta(0)).
\]
so that $\Delta r_n(x) = - \eta(x)$, $\Delta d_n(x) = -\E(\eta(0))$, and $r_n(y) = d_n(y) = 0$ on $\partial B_n$. 
The next lemma uses these estimates together with the ergodic theorem to show that $r_n$ and $d_n$ are identical in the scaling limit. 

\begin{lemma}\label{divisible_homogenize}
For each $\eta \in \tilde{\Omega}_0$, an event of probability 1, there is a constant $C := C_{d, \eta}$ so that the following holds. 
For each $\epsilon > 0$, there exists $n_0 \in \N$ so that for all  $n \geq n_0$, 
\begin{equation}\label{eq:divisible_homogenize}
\sup_{x \in B_n} \left| r_n(x) -d_n(x) \right| \leq \epsilon C n^2
\end{equation}
\end{lemma}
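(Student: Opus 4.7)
The plan is to set $w_n := r_n - d_n$ and $f(y) := \eta(y) - \E(\eta(0))$, so that $\Delta_{\Z^d} w_n = -f$ in $B_n$, $w_n = 0$ on $\partial B_n$, and $|f| \leq \eta_{\max} - \eta_{\min}$. Proving \eqref{eq:divisible_homogenize} is then equivalent to showing that the rescaled functions $\bar{w}_n(\bar{x}) := n^{-2} w_n([n\bar{x}])$ converge uniformly to zero on $\bar{B}_1$ almost surely. I would decouple this into (i) an equicontinuity step for $\{\bar w_n\}$ and (ii) a pointwise convergence step for $\bar w_n(\bar x) \to 0$, and glue them via Arzela-Ascoli.

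For (i), since $|\Delta_{\Z^d} w_n| \leq C$ in $B_n$ and $w_n = 0$ on $\partial B_n$, discrete Poisson regularity in the sense of \cite{kuo2005estimates} yields a gradient-type estimate $|w_n(x) - w_n(y)| \leq C n |x-y|$, and so $\bar w_n$ is $C$-Lipschitz on $\bar B_1$ uniformly in $n$. Combined with the uniform sup bound from the maximum principle, Arzela-Ascoli reduces the claim to showing $\bar w_n(\bar x) \to 0$ almost surely for each $\bar x$ in a countable dense subset of $B_1$.

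For (ii), fix such an $\bar x$, write $x_n := [n\bar x]$, and split the kernel at scale $\delta n$:
\[
\bar{w}_n(\bar{x}) = n^{-2}\sum_{|y-x_n|<\delta n} g_n(x_n,y)f(y) + n^{-2}\sum_{|y-x_n|\geq\delta n,\ y\in B_n} g_n(x_n,y)f(y).
\]
The first (singular) piece is bounded by $C\delta(\eta_{\max}-\eta_{\min})$ via the exit-time estimate \eqref{exit_time}. For the regular piece, the difference estimate \eqref{difference} together with the standard convergence of killed random walk Green's functions on balls (see \cite{lawler2010random}) lets me approximate the regular sum, after absorbing the $n^{d-2}$ factor to convert $n^{-2}$ into a Riemann-sum scale $n^{-d}$ (the rescaling is trivial when $d=2$), by $n^{-d}\sum_{y\in\Z^d} \phi(y/n) f(y)$ for a bounded continuous $\phi$ supported in $B_1 \setminus B_\delta(\bar x)$, up to an error that vanishes as $n\to\infty$. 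Approximating $\phi$ by step functions $\sum_k c_k \mathbf{1}_{C_k}$ over a fine cubical partition and applying the multiparameter ergodic theorem (Proposition \ref{prop:subadditive}, applied to $\pm f$) to each piece shows this Riemann sum tends to $\sum_k c_k |C_k| \E f = 0$ almost surely. Hence $\limsup_n |\bar w_n(\bar x)| \leq C\delta$ on a full-probability event, and intersecting over countable $\delta_k \to 0$ and over a countable dense subset of $\bar x$'s in $B_1$ completes the pointwise step.

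The main obstacle I anticipate is the diagonal singularity of $g_n$: a naive cube-partition applied globally would demand uniform-in-center control of local averages $|Q_m|^{-1}\sum_{y\in Q_m(z)} f(y)$, which is not available under bare stationary ergodicity (one would need independence or a quantitative mixing assumption). Isolating the singular contribution via \eqref{exit_time} and reducing the smooth far-field to a Riemann sum against $f$ sidesteps this, while the deterministic equicontinuity step upgrades pointwise convergence to uniform convergence without any union bound. A minor subtlety is that in dimension $d=2$ the Green's function carries logarithmic rather than algebraic behavior, but on the regular region $g_n$ is still uniformly bounded and admits the same continuous-limit approximation, so the argument runs dimension-uniformly.
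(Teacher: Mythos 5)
Your argument is correct, and it reaches the estimate by a somewhat different route than the paper. Both proofs share the same skeleton: the contribution of sites within distance $\delta n$ of $x$ is killed by the exit-time estimate \eqref{exit_time}, and the far-field contribution is reduced to ergodic averages of $\eta-\E(\eta(0))$ over dilations $nC$ of finitely many fixed cubes, to which the multiparameter ergodic theorem applies (after shifting by $\eta_{\min}$ so the additive process is nonnegative). Where you differ is in how the kernel is ``frozen'' on each mesoscopic cube and how uniformity in $x$ is obtained. The paper stays entirely discrete: it pools the mass in each cube $A_{\beta n}(z_i)$ to the center $z_i$ and back, pricing the rearrangement with the difference estimate \eqref{difference}; since the cubes are fixed (not centered at $x$) and only finitely many cube averages are used, the resulting bound is automatically uniform in $x$, with no compactness argument needed. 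You instead replace $n^{-2}g_n(x_n,\cdot)$ by a continuum kernel $n^{-d}\phi(\cdot/n)$ and Riemann-sum against step functions, which forces the full-probability event to depend on the center $\bar x$, and you then recover the supremum over $x\in B_n$ via equicontinuity and Arzel\`a--Ascoli. This works, but note two points you should make explicit: (i) the global Lipschitz bound $|w_n(x)-w_n(y)|\leq Cn|x-y|$ is not a pure interior estimate; near $\partial B_n$ you need the barrier $C(n^2-|x|^2)$ coming from $|\Delta_{\Z^d}w_n|\leq C$ and the zero boundary data before combining with interior gradient bounds; (ii) the discrete-to-continuum Green's function convergence you invoke must be quoted uniformly on $\{|y-x_n|\geq\delta n\}$, and in $d=2$ through the potential kernel with the logarithmic terms cancelling, as you indicate. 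Finally, the obstacle you flag (needing local averages uniformly over all centers) does not actually arise in the discrete rearrangement argument, since the cube centers $z_i$ are fixed and the $x$-dependence enters only through the deterministic kernel; the paper's version buys uniformity in $x$ directly, while your version buys a cleaner separation between the probabilistic input (ergodic averages against a continuous test kernel) and the deterministic Green's function analysis.
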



\begin{proof} 
	 Let $1 > \epsilon > 0$ be given.  Fix dyadic rational $\epsilon > \beta > 0$ small. By Proposition \ref{sub-additive-ergodic} there is an event of full probability, $\tilde{\Omega}_0$, so that for each $\eta \in \tilde{\Omega}_0$, for all $n \geq n_0$, 
\begin{equation}\label{ergodic}
\left( \E(\eta(0)) - \epsilon \right) \leq \frac{1}{|A_{\beta n}(z_i)|} \sum_{y \in A_{\beta n}(z)} \eta(y) \leq \left(  \E(\eta(0)) + \epsilon \right), 
\end{equation} 
 for all $A_{\beta n}(z_i) \subset B_{n}$ which are defined in the following way. 
Take a dyadic partition of disjoint cubes of radius $\beta$ which cover
$Q_1$ in $\R^d$, remove cubes which do not overlap $B_1$, and delete parts of the cubes which are outside $B_1$. Label each cube by an interior point $z_i \in B_n$ and enumerate them as $\{A_{\beta}(z_i)\}$. For each $z_i$, let its finite difference approximation be $A_{\beta n}(z_i) = n A_{\beta}(z_i) \cap B_n$ (delete overlapping boundaries if needed).

Rewrite, 
\begin{equation}\label{eq1}
r_n(x) - d_n(x) = \sum_{A_{\beta n}(z_i) \subset B_{n}} \sum_{y \in A_{\beta n}(z_i)} g_{n}(x,y) (\eta(y) - \E(\eta(0))).
\end{equation}
The rest of the argument is roughly the following. Imagine a non-random sandpile, $\eta_{avg}$,
in which $\eta_{avg} := \E(\eta(0))$ {\it divisible} grains are at each coordinate in $B_n$. In each subcube, $A_{\beta n}(z_i) \subset B_n$, we try to rearrange the grains in the random sandpile, $\eta$, to 
match the deterministic sandpile $\eta_{avg}$. It's possible that there aren't enough grains to do this, so we add just enough for it to match $\eta_{avg}$.  By \eqref{ergodic}, we need to add at most $\epsilon |A_{\beta n}(z_i)|$ grains to each subcube.
Hence, by the exit time estimate, the total cost associated with adding grains is of order $\epsilon O(n^2)$, by the difference estimate, the total cost of rearranging grains within each subcube is of order $o(n^2)$, leading to \eqref{eq:divisible_homogenize}.

 Here are the details. If $x \in B_{n} \backslash B_{(1-\beta) n}$, 
 by comparing to a quadratic, $\max(r_n, d_n)(x) \leq C \beta n^2$. Hence, 
 we may suppose $x \in B_{(1-\beta) n}$. First, we add $\epsilon$ grains to every site in the cube, this incurs an error which we can bound using \eqref{exit_time},
  \begin{equation}
 \mathcal{E}_1(x) = \sum_{y \in B_n} \epsilon g_n(x,y) \leq \epsilon C_d n^2.
 \end{equation}
 Then, we start rearranging. First, we remove a constant number of cubes near $x$,
	\begin{equation}
	\mathcal{A}_x = \{ A_{\beta n}(z_i) : \inf_{y \in A_{\beta n}(z_i)} |y - x| < \beta n \}, 
	\end{equation}
	by adding $\eta_{avg}-\eta_{\min}$ grains to each site in the subcubes, 
	\begin{equation}
	\mathcal{E}_2(x) = \sum_{A_{\beta n}(z_i) \in \mathcal{A}_x} \sum_{y \in A_{\beta n}(z_i)}  (\eta_{avg}-\eta_{\min}) g_n(x,y) \leq  C \beta n^2, 
	\end{equation} 
	using \eqref{exit_time}.

 Now, we rearrange the random assortment of grains in all other subcubes $A_{\beta n}(z_i)$ so that the number of grains 
 at every site is $\eta_{avg}$. We start by pooling every grain at sites $y \in A_{\beta n}(z_i)$ 
 to $z_i$, this incurs an error of 
 \begin{equation}
 \mathcal{E}_3(x,y) = (\eta(x)+\epsilon) ( g_n(x,y) - g_n(x,z_i)).
 \end{equation}
 Then, we move $\eta_{avg}$ grains from $z_i$ back to $y$, with error
  \begin{equation}
 \mathcal{E}_4(x,y) = \eta_{avg} (g_n(x,z_i) - g_n(x,y)). 
 \end{equation}
 We can iterate \eqref{difference} to see that  
 \begin{equation} 
 \sup_{x,y} |\max(\mathcal{E}_3(x,y), \mathcal{E}_4(x,y))| \leq C_d \beta n \sup_{z \in A_{\beta n}(z_i)} |x-z|^{1-d}, 
 \end{equation} 
 And, by an integral approximation, 
 \begin{equation}
 C_d \beta n \sum_{z_i \not \in \mathcal{A}_x} \sup_{z \in A_{\beta n}(z_i) } |x-z|^{1-d} \leq C_d \beta^{-1} n = o(n^2).  
 \end{equation}
 For each $y \in B_n$, write,
 \begin{align*}
 g_n(x,y) \eta(y) &=  -g_n(x,y) \epsilon \\
 &+ (\eta(y) + \epsilon)(g_n(x,y) - g_n(x,z_i)) \\
 &+ (\eta(y) + \epsilon)  g_n(x,z_i) \\ 
 &+ \eta_{avg} (g_n(x,z_i) - g_n(x,y)) \\
 &- \eta_{avg} (g_n(x,z_i) - g_n(x,y)).  
 \end{align*}
 Putting this together, 
  \begin{align*}
   &\sum_{A_{\beta n}(z_i) \subset B_{n}} \sum_{y \in A_{\beta n}(z_i)} g_{n}(x,y) \eta(y) \\
   &\geq \sum_{A_{\beta n}(z_i) \subset B_{n}} \sum_{y \in A_{\beta n}(z_i)} g_{n}(x,y) \eta_{avg} \\
   &+ (-\epsilon C n^2)  \\
   &+ \sum_{ \{ A_{\beta n}(z_i) \subset B_{n}\} \backslash \mathcal{A}_x} g_n(x,z_i) \left( \sum_{y \in A_{\beta n}(z_i)}  (\eta(y) + \epsilon - \eta_{avg}) \right)  \\
   &\geq -\epsilon C n^2 + \sum_{A_{\beta n}(z_i) \subset B_{n}} \sum_{y \in A_{\beta n}(z_i)} g_{n}(x,y) \eta_{avg}
   \end{align*}
   Where we used the fact $\inf_{ \{x,y\} \in B_n } g_n(x,y) \geq 0$. The other direction follows by swapping the roles of $\eta$ and $\eta_{avg}$.

\end{proof}

We next use this to provide the desired lower bound on $v_n$. 

\begin{lemma}\label{lowerbound}

For each $\eta \in \tilde{\Omega}_0$, an event of probability 1, and each $\epsilon > 0$, there exists $n_0$ so that for all $n \geq n_0$, 
\[
w_n \geq 1 \mbox{ for all $x \in B_{(1-\epsilon)n}$}, 
\]
where
\[
w_n \in \mathcal{L}(\eta, B_{n}) \cap \mathcal{S}(\eta,B_{n}) \mbox{ and $w_n = 0$ on $\partial B_{n}$}. 
\]

\end{lemma}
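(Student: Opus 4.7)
The plan is to compare $w_n$ to an explicit lattice potential and then invoke the strict positivity gap provided by the high density assumption. Define
\[
f_n(x) := r_n(x) - (2d-1) G_n(x), \qquad G_n(x) := \sum_{y \in B_n} g_n(x,y),
\]
so that $\Delta_{\Z^d} f_n = (2d-1) - \eta$ in $B_n$ and $f_n \equiv 0$ on $\partial B_n$. Heuristically, $f_n$ is what the random divisible sandpile odometer on $B_n$ with threshold $2d-1$ and open boundary would be if every site were certain to topple at least once.

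The first step is to establish the pointwise comparison $w_n \geq f_n$ on $\bar B_n$ via the discrete maximum principle. Since $w_n \in \mathcal{S}(\eta, B_n)$, we have $\Delta_{\Z^d} w_n \leq (2d-1) - \eta = \Delta_{\Z^d} f_n$ in $B_n$, and $w_n = f_n = 0$ on $\partial B_n$. Hence $w_n - f_n$ is superharmonic in $B_n$ with zero boundary values, so it attains its minimum on $\partial B_n$ and is nonnegative throughout $\bar B_n$.

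The second step is to bound $f_n$ from below on $B_{(1-\epsilon)n}$. Set $\ell := \E(\eta(0)) - (2d-1) > 0$; this is where the high density assumption enters. Lemma \ref{divisible_homogenize} gives, on an event of full probability and for any $\epsilon' > 0$,
\[
f_n(x) = r_n(x) - (2d-1) G_n(x) \geq \ell G_n(x) - \epsilon' C n^2
\]
for all sufficiently large $n$. The standard exit time estimate $G_n(x) = (2d)^{-1} \E_x[\tau_n] \geq c_{\epsilon, d} n^2$ for $x \in B_{(1-\epsilon) n}$ follows from optional stopping applied to $|S_t|^2 - t$. Choosing $\epsilon'$ so small that $\epsilon' C < \ell c_{\epsilon, d}/2$, we obtain $f_n(x) \geq (\ell c_{\epsilon, d}/2) n^2$, which exceeds $1$ for $n$ sufficiently large. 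Combined with step one, this gives $w_n \geq 1$ on $B_{(1-\epsilon) n}$.

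The main delicate point is that the quantitative ergodic error $\epsilon' C n^2$ from Lemma \ref{divisible_homogenize} must be absorbed into the leading-order $\ell G_n(x) \asymp n^2$ contribution; this is only possible because $\E(\eta(0)) > 2d-1$ strictly. Beyond that, everything reduces to standard lattice potential theory and a single application of the discrete maximum principle, so no additional ideas beyond the homogenization of the Green's function are required.
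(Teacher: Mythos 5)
Your proof is correct and follows essentially the same route as the paper: a discrete maximum-principle comparison of $w_n$ with the Green's-function potential of the divisible sandpile at threshold $2d-1$, with Lemma \ref{divisible_homogenize} absorbing the random fluctuation and the strict gap $\E(\eta(0)) - (2d-1) > 0$ supplying the order-$n^2$ lower bound. The only cosmetic difference is that you bound $G_n(x)$ below via the exit-time martingale $|S_t|^2 - t$, whereas the paper uses the explicit quadratic barriers $q_{2d-1}$ and $q_{2d-1+\delta}$; both yield the same estimate.
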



\begin{proof}
Let $\epsilon > 0$ be given
and   
\[
\delta := \left( \E(\eta(0)) - 2 d - 1\right) > 0. 
\]
Choose $\eta \in \tilde{\Omega}_0$, $C$, and $n \geq n_0$ from Lemma \ref{divisible_homogenize} with $\epsilon'  > 0$ small
to be chosen below. As $w_n - r_n - q_{2 d- 1}$ is superharmonic in $B_{n}$,  for $x \in B_{n}$, 
\begin{align*}
w_n(x) - r_n(x) - q_{2 d- 1}(x)  &\geq  \min_{ y \in \partial B_{n}}  \left( w_n(y) -r_n(y)- q_{2 d- 1}(y) \right) \\
&= \min_{ y \in \partial B_{n}} -q_{2 d- 1}(y),
\end{align*}
using  $w_n = r_n = 0$ on $\partial B_{n}$.
Hence, Lemma \ref{divisible_homogenize} then shows 
\[
w_n(x) \geq d_n(x)  -(2d-1) \frac{1}{2}(n^2-|x|^2) - \epsilon' C n^2 \\
\]
By assumption, $d_n + q_{2 d- 1+\delta}$ is superharmonic in $B_{n}$ and so 
\[
d_n(x) + q_{2 d- 1+\delta}(x) \geq  \min_{ y \in \partial B_{n}}\left( d_n(y) + q_{2 d- 1+\delta}(y) \right).
\]
Using again $d_n(y) =0$ on $\partial B_n$, 
\[
w_n(x) \geq  \delta/2 (n^2 - |x|^2) - \epsilon' C n^2.  
\]
In particular, we can choose $\epsilon'$ small and $n_0$ large so that
\[
w_n(x) \geq 1
\]
for $x \in B_{(1-\epsilon) n}$ . 
\end{proof}


\subsection{A comparison principle in the limit} 
In order to compare subsequential limits of the odometer for different $\eta$ we must show that $\mu(l_M, M) = \mu^*(l_M,M) = 0$.
The argument is roughly this: if both $\mu$ and $\mu^*$ are strictly positive in the limit, then there is a subsolution and supersolution whose difference bends upwards in every direction.
However, when there are enough topples, this difference obeys a comparison principle on the microscopic scale, due to Proposition \ref{burning},
and so this cannot happen.

\begin{lemma} \label{flat}
$\mu(l_M,M) = \mu^*(l_M,M) = 0$
\end{lemma}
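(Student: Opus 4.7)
I would argue by contradiction. Suppose $\alpha := \mu(l_M, M) = \mu^*(l_M, M) > 0$. The plan is to combine the strict concavity/convexity of Lemma \ref{strict-convexity} with the sandpile comparison principle of Proposition \ref{burning}, using Lemma \ref{lowerbound} to guarantee enough topples for the comparison principle to apply on a macroscopic ball.

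First I would apply Lemma \ref{strict-convexity} to $\mu$ and (with the stated sign change) to $\mu^*$, then promote each resulting subsolution and supersolution to a simultaneously legal and stabilizing toppling function via Lemma \ref{stabilizing-legal}. For $\eta$ in a set of full probability and all $n$ sufficiently large, this produces $u_n, v_n \in \mathcal{L}(\eta, B_n) \cap \mathcal{S}(\eta, B_n)$ such that at each contact point $x_0 \in B_{\beta n}$ (with supergradient $p_0$) of the concave envelope of $\tilde u_n := u_n - q_{l_M} - q_M$, and at each contact point $x_0^* \in B_{\beta n}$ (with subgradient $p_0^*$) of the convex envelope of $\tilde v_n := v_n - q_{l_M} - q_M$,
\begin{align*}
\tilde u_n(y) &\leq \tilde u_n(x_0) + p_0 \cdot (y-x_0) - C\alpha n^2, \\
\tilde v_n(y) &\geq \tilde v_n(x_0^*) + p_0^* \cdot (y-x_0^*) + C\alpha n^2,
\end{align*}
for $y \in \partial B_n$. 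The quadratic pieces cancel in the difference, so on $\partial B_n$,
\[
(v_n - u_n)(y) \geq 2C\alpha n^2 + (p_0^* - p_0) \cdot y + K,
\]
for an explicit constant $K$ depending on the contact data.

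Next I would set up the comparison principle. By Lemma \ref{lowerbound} and the high density hypothesis $\E(\eta(0)) > 2d-1$, the open-boundary odometer $w_0$ for $\eta$ on $B_n$ satisfies $w_0 \geq 1$ on $B_{(1-\epsilon)n}$, so by Proposition \ref{recurrent} the residual sandpile $\sigma := \eta + \Delta_{\Z^d} w_0$ is stable on $B_n$ and recurrent on $B_{(1-\epsilon)n}$. Proposition \ref{Abelian} writes $u_n = w_0 + u_n'$ and $v_n = w_0 + v_n'$ with $u_n', v_n' \in \mathcal{L}(\sigma, B_n) \cap \mathcal{S}(\sigma, B_n)$. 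Choosing an integer-valued linear (hence harmonic) correction $G$ that absorbs the linear term $(p_0^* - p_0) \cdot y$ modulo an $O(1)$ discretization error, Proposition \ref{burning} on $B_{(1-\epsilon)n}$ gives
\[
\inf_{B_{(1-\epsilon)n}}(v_n - u_n + G) \geq \inf_{\partial B_{(1-\epsilon)n}}(v_n - u_n + G).
\]
After applying Lemma \ref{strict-convexity} at the slightly reduced radius $(1-\epsilon)n$ (which still gives a lower bound of order $\alpha n^2$), the right-hand side is at least $K + 2C\alpha n^2$. To derive the contradiction, I would exhibit an interior point at which $(v_n - u_n + G)$ is strictly smaller than this.

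The main obstacle is this last step. The strict bounds are one-sided—upper for $\tilde u_n$, lower for $\tilde v_n$—so controlling $v_n - u_n + G$ from above at an interior point is nontrivial. I would handle it by a touching argument in which one arranges a common contact point $x_0 = x_0^*$: for instance, by exploiting the stationarity of $\eta$ to translate the problem, combined with a density/pigeonhole argument (the contact set of each envelope occupies a fraction of $B_{\beta n}$ of order $\alpha$ by the proof of Lemma \ref{strict-convexity}, so the two contact sets can be forced to overlap or lie within $O(1)$ of one another). At such a common $x_0$, both envelopes agree with the underlying functions, so $(v_n - u_n + G)(x_0) = K + O(1)$, which is strictly less than $K + 2C\alpha n^2$ for $n$ large—contradicting the comparison principle. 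A secondary technical issue, that the strict bounds live on $\partial B_n$ while comparison is applied on $\partial B_{(1-\epsilon)n}$, is resolved by invoking Lemma \ref{strict-convexity} at radius $(1-\epsilon)n$ from the outset.
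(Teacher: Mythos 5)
Your architecture is the paper's, almost step for step: contradiction from $\alpha>0$, Lemma \ref{strict-convexity} applied to both $\mu$ and $\mu^*$, Lemma \ref{stabilizing-legal} to work with toppling functions in $\mathcal{L}\cap\mathcal{S}$, the Abelian decomposition through the shared open-boundary odometer, recurrence of the residual pile from Lemma \ref{lowerbound} and Proposition \ref{recurrent}, integer-linear harmonic corrections (note the rounding costs $O(n)$, not $O(1)$, which is still harmless), and Proposition \ref{burning} giving interior $\geq$ boundary $\geq K + c\,\alpha n^2 - o(n^2)$. The genuine gap is exactly the step you flag as ``the main obstacle,'' and the fix you propose does not work. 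The contact sets of the two envelopes have density in $B_{\beta n}$ only of order $\alpha$ (each contact point carries at most a constant amount of supergradient measure, by Lemma \ref{mu-is-bounded}), and two subsets of small density need not intersect nor come within $O(1)$ of each other; a pigeonhole argument forces overlap only when the densities sum to more than one. Stationarity is a statement about the law of $\eta$ and gives no mechanism, for a fixed realization, to translate one function's contact set onto the other's while keeping both constructions on the same ball with the same $\eta$. Moreover, even if $x_0$ and $x_0^*$ were adjacent, evaluating $v_n-u_n+G$ there requires a \emph{lower} bound on $\tilde u_n$ near $x_0$, while the supergradient inequality at $x_0$ only bounds $\tilde u_n$ from above; proximity of contact points alone cannot close the argument without a growth estimate.

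The missing ingredient is the discrete Harnack estimate, Proposition \ref{harnack}, and with it no common contact point is needed. After the affine normalizations, the two functions $-(u-q_M-q_{l_M}+L_u)$ and $(v-q_M-q_{l_M}+L_v)$ are nonnegative on $B_n$, vanish at $x_0$ and $x_0^*$ respectively, and have Laplacians bounded by a constant because $u,v\in\mathcal{L}\cap\mathcal{S}$. Applying Proposition \ref{harnack} centered at each contact point and evaluating at the origin (which lies within distance $\beta n$ of both, since $x_0,x_0^*\in B_{\beta n}$) gives that each function is at most $C\beta^2 n^2$ there; adding the two bounds yields, in your notation, $(v_n-u_n+G)(0)\leq K + C\beta^2 n^2 + o(n^2)$. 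Choosing $\beta$ small relative to the $\alpha$-dependent constant in the boundary estimate contradicts the comparison-principle lower bound $K + c\,\alpha n^2 - o(n^2)$. This is precisely how the paper concludes; your argument is sound up to that point, and only this Harnack step is missing.
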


\begin{proof}
	By definition, $\mu(l_M,M) = \mu^*(l_M,M) \geq 0$. We will show that it is impossible for both $\mu(l_M,M)$ and $\mu^*(l_M,M)$ to be strictly positive. Suppose for sake of contradiction that $\mu(l_M, M) = \mu^*(l_M,M) = \alpha > 0$.

	As $\alpha > 0$ we can invoke Lemma \ref{strict-convexity} for both $\mu$ and $\mu^*$. Take $0 < \beta < 1$ small. Using the fact $\mu$ converges evenly over the unit ball, (see Lemma 3.2 in \cite{armstrong2014stochastic}), 
	we may select $v, u \in \mathcal{L}(B_n, \eta) \cap \mathcal{S}(B_n, \eta)$ 
	with $|\partial^-(v,B_{\beta n},B_n)|>0$ and $|\partial^+(u,B_{\beta n}, B_n)| > 0$
	for which the claims in Lemma \ref{strict-convexity} apply. (Note we used Lemma \ref{stabilizing-legal} to pick locally legal {\it and} stabilizing toppling functions.)

	 Moreoever,  as $\mu$ and $\mu^*$ are invariant under affine transformations,  we can then choose affine functions $L_u$ and $L_v$ so that
	\begin{align} \label{positive}
	&\inf_{x \in B_{n}}  -(u - q_{M} + L_u)(x) = (u - q_{M} + L_u)(x_0) = 0 \\
	&\inf_{x \in B_{n}}  (v - q_{M} + L_v)(x) = (v - q_{M} + L_v)(x_0^*) = 0 \nonumber,
	\end{align}
	for some $x_0, x_0^* \in B_{\beta n}$ and 
	\begin{align} \label{curveup}
	& -(u - q_{M} + L_u)  \geq C n^2  \mbox{ on } \partial B_{n}  \\ 
	& (v - q_{M} + L_v)  \geq C n^2  \mbox{ on } \partial B_{n} \nonumber.
	\end{align} 

	Now, use the Abelian property, Proposition \ref{Abelian}, to decompose $u$ and $v$ into the initial toppling of $\eta$ and then topplings originating from the boundary, 
	$u = u_1 + w$ and $v = v_1 + w$. By Lemma \ref{lowerbound} and Proposition \ref{recurrent}, (moving the boundary of the ball inwards if necessary and accumulating an $o(n^2)$ error),
	$\Delta_{\Z^d} w + \eta$ is recurrent in $B_n$. Now, approximate $L_v(x) = p \cdot x +r$  by 
	\[
	\tilde{L}_v(x) = [p] \cdot x + [r],
	\]
	an integer-valued function, (this approximation also incurs an $o(n^2)$ error). Repeat for $L_u$ with $\tilde{L}_u$.  Hence, by Proposition \ref{burning} and \eqref{curveup}
	\begin{align*}
	&\left( (v + \tilde{L}_v - q_M) - (u + \tilde{L}_u - q_M) \right)(0)  \\
	&=   \left( (v_1 + \tilde{L}_v )- (u_1 + \tilde{L}_u) \right)(0)  \\
	&\geq \inf_{y \in \partial B_{n}}   \left( (v_1 + \tilde{L}_v )- (u_1 + \tilde{L}_u) \right)(y) \\
	&=  \inf_{y \in \partial B_{n}}   \left( (v + L_v - q_M)- (u + L_u - q_M) \right)(y)  - o(n^2) \\
	&\geq C n^2.
	\end{align*}
	However, this contradicts the Harnack inequality for $n$ large and $\beta$ small.  Indeed, due to \eqref{positive} and 
	\[
	\max(|\Delta_{\Z^d} (v - q_{M} + L_v)|, |\Delta_{\Z^d}  (u - q_{M} + L_u))| \leq C,
	\]
	we can apply the Harnack inequality, Lemma \ref{harnack}, to see
	\begin{equation} \label{small}
	\left( (v + L_v - q_M) - (u + L_u - q_M) \right)(0)  \leq C \beta n^2 
	\end{equation}
	as $x_0, x_0^* \in B_{\beta n}$.

\end{proof}

\subsection{Proof of Theorem \ref{thetheorem}} \label{subsec:proof_of_theorem}
Choose $\Omega_0$  to be the intersection of $\Omega_{l,M}$ in Lemma \ref{sub-additive-ergodic} 
over all $l \in \R$ and $M \in \symm^d$ with rational entries and $\tilde{\Omega}_0$ from Lemma \ref{lowerbound}. 
Pick $\eta, \eta' \in \Omega_0$ and choose subsequences of scaled odometers $\bar{v}_n$ and $\bar{v}_n'$ corresponding to $\eta$ and $\eta'$
with {\it free boundaries} which converge uniformly to $v$ and $v'$. Suppose for sake of contradiction that $v \not = v'$. Since $v = v' = 0$ outside $B_R$ for some $R > 0$, we may assume without loss of generality that 
\[
\sup_{B_R} (v - v') >  0 = \sup_{\partial B_R} (v - v')
\]
We restate for the reader results contained in \cite{pegden2013convergence}. 
\begin{lemma} \cite{pegden2013convergence}  \label{regularity}
\begin{enumerate}
\item  There exists $a \in \R^d$ either in $W$ or outside the closure of $W$ so that $v(a) > v'(a)$, both $v$ and $v'$ are twice differentiable at $a$ and
$D^2(v - v')(a) < -\delta I$ for some $\delta > 0$.  
\item  For each $\epsilon > 0$, if $a$ is outside the closure of $W$,  we may select $u : \Z^d \to \Z$ such that 
\[
\Delta_{\Z^d} u(x) \leq 2 d -1 \mbox{ and } u(x) \geq \frac{1}{2} x^T (D^2 v(a) - \epsilon  I) x \mbox { for all $x \in \Z^d$ }. 
\]
\item For each $\epsilon > 0$, if $a$ is in $W$, we may select $u: \Z^d \to \Z$ such that
\[
\Delta_{\Z^d} u(x) \leq 2 d -1 \mbox{ and } u(x) \geq \frac{1}{2} x^T (D^2 v(a) - \epsilon  I) x + o(|x|^2) \mbox { for all $x \in \Z^d$ }. 
\]
\end{enumerate}
\end{lemma}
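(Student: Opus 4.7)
The plan is to adapt the argument from \cite{pegden2013convergence}, where analogous statements are proved for periodic sandpiles, to the present setting. Both $v$ and $v'$ are continuous, nonnegative, compactly supported, and satisfy $|\Delta v|, |\Delta v'| \leq C$ in the viscosity sense, inherited from the uniform bound $|\Delta_{\Z^d} v_n| \leq C$ on the prelimit odometers established in the proof of Lemma \ref{compactness}.

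For part (1), I perturb $v - v'$ by a small quadratic of the form $\tfrac{\epsilon}{2}|x - x_0|^2$ centered near a maximizer $x_0$, making the resulting function attain a \emph{strict} interior maximum on $B_R$. An application of Jensen's lemma (from the Crandall-Ishii-Lions theory of viscosity solutions), combined with Aleksandrov's theorem applied to the semi-concave envelope, then yields a density point $a$ at which both $v$ and $v'$ are punctually twice differentiable and $D^2(v - v')(a) \leq -\delta I$ for some $\delta > 0$ depending on $\epsilon$. Since $\partial W$ has Lebesgue measure zero, the admissible set of $a$ can be chosen to avoid $\partial W$, so $a$ lies either in $W$ or outside $\overline{W}$.

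For parts (2) and (3), I carry out a blow-up of $v_n$ at $a$. Set $p_n := [na]$ and choose an integer-valued affine function $L_n : \Z^d \to \Z$ approximating $v_n(p_n) + n \nabla v(a) \cdot y$ to within rounding, and define $u_n(y) := v_n(p_n + y) - L_n(y)$; since $L_n$ is affine, $\Delta_{\Z^d} u_n = \Delta_{\Z^d} v_n$. In the setting of part (2), when $a$ lies outside $\overline{W}$, the shifted argument $p_n + y$ stays outside $W_n$ on any fixed lattice ball for $n$ large, so $\Delta_{\Z^d} u_n \leq 2d - 1$ directly. Twice differentiability of $v$ at $a$ together with the uniform convergence $\bar v_n \to v$ yields, for each $\epsilon > 0$, the quadratic lower bound $u_n(y) \geq \tfrac{1}{2} y^T(D^2 v(a) - \epsilon I) y$ on an intermediate range of $|y|$. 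Because the second differences of $v_n$ are uniformly bounded, $\{u_n\}$ is locally uniformly bounded and integer-valued, and a diagonal argument produces a pointwise subsequential limit $u : \Z^d \to \Z$ with the stated properties.

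Part (3), where $a \in W$, is the main technical hurdle. Inside $W_n$ the prelimit inequality reads $\Delta_{\Z^d} v_n \leq 2d - 1 - \eta$, which does not pass directly to $\Delta_{\Z^d} u_n \leq 2d - 1$, and the random fluctuations of $\eta$ prevent an exact quadratic comparison at scale. The remedy is to subtract an integer approximation of the \emph{deterministic} mean-field profile $d_n$ from $v_n$ before blowing up: by Lemma \ref{divisible_homogenize}, $r_n - d_n = o(n^2)$ uniformly on $B_n$, so the discrepancy between the actual and mean random source is absorbed into a lower-order term and becomes the $o(|x|^2)$ correction in the final statement. Preserving integrality of $u$ while implementing this modification, and then passing the Laplacian bound $\Delta_{\Z^d} u \leq 2d - 1$ to the subsequential limit, is the delicate step.
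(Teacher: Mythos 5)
Your treatment of parts (1) and (2) is fine in outline and is essentially what the paper does: it simply quotes Proposition 2.5 and Lemma 4.1 of \cite{pegden2013convergence}, whose proofs proceed by the Jensen/Aleksandrov argument and the blow-up you describe (with the caveat, handled in \cite{pegden2013convergence}, of upgrading the quadratic bound from an intermediate range of $|y|$ to all of $\Z^d$). The problem is part (3), which is the only part the paper actually proves and the only genuinely random-background statement, and there your route has a real gap.

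The constraint $\Delta_{\Z^d}u(x)\leq 2d-1$ in part (3) is a pointwise inequality at every lattice site, while Lemma \ref{divisible_homogenize} is a macroscopic statement about rescaled potentials. Inside $W_n$ the prelimit bound is $\Delta_{\Z^d}v_n\leq 2d-1-\eta$, and subtracting (an integer rounding of) the mean-field profile $d_n$, whose Laplacian is the constant $-\E(\eta(0))$, yields $\Delta_{\Z^d}(v_n-d_n)(x)\leq 2d-1+\bigl(\E(\eta(0))-\eta(x)\bigr)$, which exceeds $2d-1$ at every site where $\eta(x)<\E(\eta(0))$ --- a positive density of sites. The estimate $\sup_{B_n}|r_n-d_n|=o(n^2)$ cannot repair this: it controls the potential of the fluctuation, not its Laplacian, and the $o(|x|^2)$ allowance in the statement of part (3) relaxes only the quadratic lower bound on $u$, not the Laplacian constraint. (Rounding $d_n$ to integers introduces further $O(1)$ Laplacian errors of both signs.) Note also that your argument never uses the high-density hypothesis, which is essential here. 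The paper's mechanism is different and stays at lattice scale: periodically tile a large sample $\eta|_{B_{rn}}$ to get $\tilde\eta$, run the construction of Lemma 4.1 of \cite{pegden2013convergence} with this periodic background to obtain an integer-valued $u$ with $u\geq q_{D^2v(a)-\epsilon I}$ and $\Delta_{\Z^d}u+\tilde\eta\leq 2d-1$, use the ergodic theorem (Lemma \ref{sub-additive-ergodic}) and $\E(\eta(0))>2d-1$ to ensure the empirical density of $\tilde\eta$ is at least $2d-1$, and then invoke Rossin's stabilizability criterion for periodic configurations (\cite{rossin2000proprietes}, Fact 3.5 in \cite{levine2016apollonian}) to produce a nonnegative subquadratic toppling correction $w$ with $\Delta_{\Z^d}(u+w)\leq 2d-1$ on all of $\Z^d$; the correction $w$ (together with the periodization error) is exactly what the $o(|x|^2)$ term in the statement accommodates. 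You would need to replace your $d_n$-subtraction step with an argument of this kind, or otherwise explain how a pointwise Laplacian bound survives your modification.
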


\begin{proof}
The first and second statements are Proposition 2.5 and Lemma 4.1 in \cite{pegden2013convergence}. We sketch the third. 
  For each $\epsilon > 0$, the proof of Lemma 4.1 in \cite{pegden2013convergence} gives a function 
\[
u : \Z^d \to \Z 
\]
with 
\[
u(x) \geq \frac{1}{2} x^T (D^2 v(x_0) - \epsilon I)  x. 
\]
and 
\[
\Delta_{\Z^d} u + \tilde{\eta} \leq 2 d -1  
\]
where $\tilde{\eta}$ is a periodic tiling of $\eta$ in $B_{ r n }$ for some $r > 0$ and $n \in \N$ large. 
Due to Lemma \ref{sub-additive-ergodic}, picking $n$ larger if necessary, we have 
\[
\frac{1}{B_{r n}} \sum_{x \in B_{r n}} \eta(x) \geq 2 d -1
\]
Hence, by Rossin's observation \cite{rossin2000proprietes} (see Fact 3.5 in \cite{levine2016apollonian}), as a sandpile configuration on $\Z^d$, $\Delta_{\Z^d} u$ is stabilizable, and so by toppling it, we find a subquadratic, finite $w: \Z^d \to \N$ so that 
\[
\Delta_{\Z^d} (u + w)  \leq 2 d -1, 
\]
and $(u + w)(x) = q_{D^2 v(a) - \epsilon}(x)+ o(|x|^2)$. 

\end{proof}

Now, let $a \in \R^d \backslash \partial W$ be a given point satisfying the properties in part 1 of Lemma \ref{regularity}. 
If $a$ is outside the closure of $W$, the argument in the proof of Theorem 4.2 in \cite{pegden2013convergence} which uses part 2 of Lemma \ref{regularity} leads to a contradiction. So, it suffices to suppose $a \in W$. 
In this case, we cannot use the same argument to compare $v$ and $v'$ as they stabilize (possibly) different random sandpiles. Instead, we use $\mu$ to compare the two.

Since $v'$ is twice differentiable at $a$, by Taylor's theorem, 
\[
v'(x) = \phi(x) + o(|x-a|^2)
\]
where 
\[
q_M + L_{\phi} := \phi(x) := v'(a) + D v'(a) \cdot (x - a) +  \frac{1}{2} (x - a)^T D^2 v'(a)(x - a)
\]
Pick the unique $l := \bar{F}_{\eta}(D^2 v'(a)) \in \R$ so that 
\[
\mu(l, D^2 v'(a)) = \mu^*(l, D^2 v'(a)) = 0.
\]
By approximation, (using Lemma \ref{lemma:basic_effective}), we can assume $M$ and $l$ are rational. Then, by Lemma \ref{ABP}, (recalling that $\mu$ is invariant under affine transformations), for all small $r > 0$
and $n \in \N$ large,
\begin{align*}
\inf_{x \in B_{r n}(a)}  \left( v_n  - q_M - n L_\phi -q_l \right)(x) &\geq \inf_{y \in \partial B_{r n}(a)} \left( v_n  - q_M - n L_\phi - q_l \right)(y) \\
&- C_d n \mu^*(B_{r n}, \eta, 0, M)^{1/d}.
\end{align*}
And so, after rescaling, 
\begin{align*}
\inf_{x \in n^{-1} B_{r n}(a)} \left( \bar{v}_n  - \phi - q_l \right)(x) &\geq \inf_{y \in \partial n^{-1} B_{r n}(a)} \left( \bar{v}_n  - \phi - q_l \right)(y) \\
&- \left( \frac{C_d n \mu^*(B_{r n}, \eta, 0, M)^{1/d} }{n^2}\right)
\end{align*}
which implies by uniform convergence of $\bar{v}_n \to v$ and Lemma \ref{sub-additive-ergodic},
\[
\inf_{x \in B_r^{}(a)} \left( v- \phi - q_l \right)(x) \geq \inf_{y \in \partial B_{r} (a) } \left( v - \phi - q_l \right)(y). 
\]
In particular, 
\begin{align} \label{almost_strict_max}
(v - v' - q_l)(a)  &= (v - \phi - q_l)(a) \\ \nonumber
&\geq \inf_{y \in \partial B_{r} (a) } \left( v - \phi - q_l \right)(y)  \\ \nonumber
&=  \inf_{y \in \partial B_{r} (a) } \left( v - v' - q_l \right)(y)  - o(r^2) \nonumber
\end{align}
If $l \leq 0$, sending $r \to 0$ in \eqref{almost_strict_max} contradicts $D^2(v-v')(a) < -\delta I$. Hence $l > 0$. However, the same argument, applying Lemma \ref{sub-additive-ergodic} to $\mu$ and $v'$ shows, 
\[
(v' - \phi - q_l)(a) \geq  \inf_{y \in \partial B_{r}(a)} (v' -\phi-q_l)(y) 
\]
which contradicts Taylor's theorem for $r$ small as $v'$ and $q_l$ are twice differentiable at $a$.





\section{Convergence of the random divisible sandpile} \label{sec:divisible_sandpile}
One of the challenges involved in the Abelian sandpile model is the integrality constraint on the odometer function. 
In the {\it divisible sandpile} model, this constraint is relaxed and sites are allowed to topple a fractional number of times. 
This relaxation enables the use of simple random walk estimates which leads to a more direct proof of convergence. 

\subsection{Description of the divisible sandpile}
We briefly describe the divisible sandpile, referring the interested reader to \cite{levine2010scaling,levine2016divisible} for more details. 
Begin with some, possibly fractional, distribution of sand and holes, on a domain $V \in \Z^d$,  $\eta : V \to \R$.  A site $x \in V$ is unstable whenever $\eta(x) > 1$, 
in which case the {\it excess mass}, $1 - \eta(x)$, is equally distributed among the neighbors of $x$ until every site is stable. The odometer function, 
$v$, then counts the total mass emitted by each site. Here, the starting point is also a discrete PDE: the 
{\it least action principle for the divisible sandpile}. 
\begin{prop}[Proposition 2.5 in \cite{levine2016divisible}] \label{prop:div_lap}
\[
v = \min\{ f: \bar{V} \to \R^+: \Delta_{\Z^d} f + \eta \leq 1 \}
\]
\end{prop}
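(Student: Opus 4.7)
The plan is to establish the two inclusions separately. The easy direction is that the odometer function $v$ belongs to the set on the right: by definition of stabilization, topplings stop precisely when $\eta + \Delta_{\Z^d} v \leq 1$ in $V$, and $v \geq 0$ since it measures total (nonnegative) mass emitted from each site.

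For minimality, the plan is to build $v$ as the monotone limit of partial toppling functions $u_k$ and to show inductively that $u_k \leq f$ for every candidate $f$ in the right-hand set. Start with $u_0 \equiv 0$. Given $u_k$ such that $\eta + \Delta_{\Z^d} u_k$ is not yet stable, pick an unstable site $x_k \in V$ according to some fair schedule (every persistently unstable site is eventually chosen) and set
\[
u_{k+1}(x) = u_k(x) + \epsilon_k \mathbf{1}_{\{x = x_k\}}, \qquad \epsilon_k := \tfrac{1}{2d}\bigl(\eta(x_k) + \Delta_{\Z^d} u_k(x_k) - 1\bigr) > 0,
\]
so that $x_k$ becomes exactly stable after the partial toppling. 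The key inductive computation is: if $u_k \leq f$ on $\bar V$, then using $\Delta_{\Z^d} f(x_k) + \eta(x_k) \leq 1$,
\[
2d\, f(x_k) \geq \sum_{y \sim x_k} f(y) + \eta(x_k) - 1 \geq \sum_{y \sim x_k} u_k(y) + \eta(x_k) - 1 = 2d\bigl(u_k(x_k) + \epsilon_k\bigr),
\]
so $f(x_k) \geq u_{k+1}(x_k)$, while at every other site $u_{k+1} = u_k \leq f$. This completes the induction, and taking $k \to \infty$ yields $v \leq f$.

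The main obstacle I anticipate is making the convergence $u_k \to v$ rigorous and independent of the schedule. Monotonicity of $(u_k)$ combined with the domination $u_k \leq f$ (applied to any fixed stabilizing $f$, for instance one obtained from the Green's function with open boundary) gives a pointwise limit $u_\infty$. Fairness forces the residual instability at every site to vanish, so $\eta + \Delta_{\Z^d} u_\infty \leq 1$, i.e.\ $u_\infty$ is a stabilization of $\eta$. Since the divisible sandpile enjoys an \emph{Abelian property}—any fair legal toppling sequence produces the same odometer—this limit equals $v$, and passing to the limit in $u_k \leq f$ yields $v \leq f$. In unbounded or free-boundary settings one additionally needs a Green's-function-type argument, or exhaustion by bounded subdomains, to rule out mass escaping to infinity before concluding.
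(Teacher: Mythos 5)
The paper does not prove this proposition at all: it is quoted verbatim as Proposition 2.5 of the cited Levine--Peres work on the divisible sandpile, so there is no internal proof to compare against. Your argument is correct and is essentially the standard least-action proof from that reference: $v$ lies in the admissible set because the stabilized configuration satisfies $\eta + \Delta_{\Z^d} v \leq 1$ and $v \geq 0$, and the inductive domination $u_k \leq f \Rightarrow u_{k+1} \leq f$ is exactly the right computation (your $\epsilon_k$ is consistent with the convention, implicit in the unnormalized $\Delta_{\Z^d}$ in the statement, that the odometer records the mass sent to each single neighbor rather than the total emitted). One small refinement would streamline the end: your induction only uses that the amount emitted at step $k$ is at most the current excess, so it applies verbatim to \emph{any} legal toppling sequence, in particular to whichever sequence defines $v$; this gives $v \leq f$ directly and lets you avoid invoking the Abelian property as a separate black box (which, since its usual proof is this same domination argument, could otherwise look circular). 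The remaining sketched points --- nonemptiness of the admissible set on a bounded $V$ via the Green's function, and the fairness argument showing the limiting configuration is stable because a site with residual excess $\delta$ would be toppled infinitely often with increments bounded below --- are standard and fill in correctly.
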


\subsection{Convergence of the odometer function}

As in Section 2, we consider a stationary, ergodic, probability space $(\Omega, \mathcal{F}, \mathbf{P})$, with $\Omega$ the set of all bounded backgrounds, 
\[
\eta : \Z^d \to \R
\]
for which 
\[
\sup_{x \in \Z^d} \eta(x) < \infty. 
\]
In this case, we do not require $\eta$ to be high density, but we do assume for simplicity uniform boundedness: there exists $\eta_{\min}, \eta_{\max} \in \R$ so that for every $x \in \Z^d$, 
\begin{equation}
 \mathbf{P}\left[ \eta_{\min} \leq \eta(x) \leq \eta_{\max} \right] = 1. 
\end{equation} 
Let $W \subset \R^d$ be a bounded Lipschitz domain. For each $n \in \N$, let $W_n = \Z^d \cap n W$ denote the discrete approximation of $W$. 
Initialize the sandpile according to $\eta(x)$ in $W_n$ and let $v_{n}$ be its odometer function (defined via Proposition \ref{prop:div_lap}).  Next, consider the averaged initial sandpile, 
\[
\eta_{avg}  := \E \eta(0), 
\]
and the corresponding odometer function, $v_{avg_n}$ for $\eta_{avg}$ in $W_n$.   For the reader's convenience, we restate the form of 
Lemma \ref{divisible_homogenize} we use. Let $g_n(x,y)$ be the Green's function for simple random walk started at $x$ stopped when exiting $W_n$, and 
\[
r_n(x) := \sum_{y \in W_{n}} g_{n}(x,y) \eta(y),
\]
\[
d_n(x) :=  \sum_{y \in W_{n}} g_{n}(x,y) \eta_{avg}.
\]
\begin{lemma}\label{divisible_homogenize2}
There exists a constant $C := C_d$ so that on an event of full probability,  
for each $\epsilon > 0$, there exist $n_0 \in \N$ so that for all  $n \geq n_0$, 
\begin{equation}\label{eq2}
\sup_{x \in \bar{W}_{n}} \left| r_n(x) -d_n(x) \right| \leq \epsilon C_d n^2
\end{equation}
\end{lemma}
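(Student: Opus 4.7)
The plan is to adapt the proof of Lemma \ref{divisible_homogenize} essentially verbatim, replacing the ball $B_n$ with the finite-difference Lipschitz domain $W_n$. The three ingredients that drove the ball argument were (a) a dyadic partition of $B_n$ on which the ergodic theorem gives average convergence of $\eta$ to $\eta_{avg}$, (b) the exit-time estimate $\sum_{y} g_n(x,y) = O(n^2)$ together with the partial sum $\sum_{|z|<\delta n} g_n(x,x+z) = \delta \, O(n^2)$, and (c) the gradient estimate $|g_n(x,y) - g_n(x,y+e_i)| = O(|x-y|^{1-d})$ for $x,y$ away from the boundary. Each of these has a Lipschitz-domain analogue (see \cite{lawler2010random}), so one begins by citing the corresponding exit-time and Green's-function estimates for simple random walk killed on leaving $W_n$.

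First, I would fix $\epsilon>0$, select a small dyadic $\beta<\epsilon$, and partition $\R^d$ into axis-aligned cubes of side $\beta$; intersecting with $W$ and rescaling by $n$ yields a family of subsets $\{A_{\beta n}(z_i)\}$ covering $W_n$. Proposition \ref{prop:subadditive} then supplies, on a full-probability event, the approximate local average
\[
\bigl| |A_{\beta n}(z_i)|^{-1}\!\!\!\sum_{y \in A_{\beta n}(z_i)} \eta(y) - \eta_{avg} \bigr| \leq \epsilon
\]
uniformly over those subcubes which are \emph{interior} to $W_n$ (i.e.\ not clipped by the Lipschitz boundary), for all $n \geq n_0$. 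The clipped boundary-layer cubes contribute a region of volume $O(\beta n^d)$, and by comparison with a quadratic barrier adapted to $W$ (using the Lipschitz regularity of $\partial W$), both $r_n$ and $d_n$ are bounded by $C\beta n^2$ whenever $x$ lies within distance $\beta n$ of $\partial W_n$, handling this case.

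For $x$ in the interior, I would run the same rearrangement scheme as in Lemma \ref{divisible_homogenize}: add $\epsilon$ extra grains everywhere (cost $\epsilon C n^2$ by the exit-time estimate), excise the subcubes $\mathcal{A}_x$ within distance $\beta n$ of $x$ by filling them to constant height (cost $O(\beta n^2)$), and on each remaining subcube $A_{\beta n}(z_i)$ replace the random sandpile with the constant $\eta_{avg}$ by first pooling mass at $z_i$ and then redistributing. The pooling and redistribution each incur a telescoping sum of differences $|g_n(x,y)-g_n(x,z_i)|$, bounded by $C \beta n\,|x-z_i|^{1-d}$; summing via an integral comparison gives total cost $O(\beta^{-1} n) = o(n^2)$. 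The ergodic bound then controls the leftover mismatch per cube, and combining the terms yields the claimed $\epsilon C_d n^2$ bound.

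The main obstacle is making sure the Green's-function estimates retain the exponents $O(|x-y|^{1-d})$ and $O(n^2)$ up to the (Lipschitz) boundary of $W_n$ and that the boundary-layer cubes really are negligible. For Lipschitz domains this is classical but it is the one place where the proof genuinely differs from the ball case; once these are in hand, the telescoping/rearrangement computation transcribes without further change.
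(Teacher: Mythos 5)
Your proposal is correct and matches the paper's intent: the paper gives no separate argument for this lemma, simply restating the ball-case Lemma \ref{divisible_homogenize} in the form used for $W_n$, so the implicit proof is exactly the adaptation you describe (ergodic averages on a dyadic partition, exit-time bounds, Green's-function difference estimates, and the grain-rearrangement scheme). Your added care about the clipped boundary-layer cubes and the validity of the Green's-function estimates in a Lipschitz domain addresses the only point where the transcription from $B_n$ to $W_n$ requires any new input.
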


Levine and Peres showed in \cite{levine2010scaling} that $\bar{v}_{avg_n}$ converges uniformly to the solution of a linear PDE. 
So, in order to show that $\bar{v}_n$ has a scaling limit, it suffices to show that it stays close to $\bar{v}_{avg_n}$ for all large $n$. Most of the work for this proof is done in Lemma \ref{divisible_homogenize2}, all that's left is a use of the least action principle for the divisible sandpile.
\begin{theorem} \label{divisible_theorem}
On an event of full probability, as $n \to \infty$, the rescaled functions $\bar{v}_n:= n^{-2} v_n([n x])$  and $\bar{v}_{avg_n}:= n^{-2} v_{avg_n}([n x])$ converge uniformly together, 
\[
\sup_{x \in n^{-1} \bar{W}_n} |\bar{v}_n(x) - \bar{v}_{avg_n}(x)| \to 0.
\]
\end{theorem}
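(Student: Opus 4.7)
The plan is to use the least action principle (Proposition \ref{prop:div_lap}) as a one-shot comparison device: if we can exhibit a non-negative competitor $f$ satisfying $\Delta_{\Z^d} f + \eta \leq 1$ on $W_n$, then $v_n \leq f$, and symmetrically for $v_{avg_n}$. The natural choice of competitor for $v_n$ will be $v_{avg_n}$ plus a correction built from the Green's function sums $r_n, d_n$, whose difference is precisely what Lemma \ref{divisible_homogenize2} controls.

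Concretely, fix $\epsilon > 0$ and let $n \geq n_0$ be as in Lemma \ref{divisible_homogenize2}. I would set
\[
f := v_{avg_n} + (r_n - d_n) + \epsilon C_d n^2
\]
on $\bar{W}_n$. The key identities are $\Delta_{\Z^d} r_n = -\eta$ and $\Delta_{\Z^d} d_n = -\eta_{avg}$ on $W_n$ (which follow from the definition of $g_n$ and the normalization $1/(2d)$), so
\[
\Delta_{\Z^d} f + \eta = \Delta_{\Z^d} v_{avg_n} + (-\eta + \eta_{avg}) + \eta = \Delta_{\Z^d} v_{avg_n} + \eta_{avg} \leq 1,
\]
because $v_{avg_n}$ already satisfies the least action principle for $\eta_{avg}$. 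Non-negativity of $f$ is immediate: $v_{avg_n} \geq 0$, and by Lemma \ref{divisible_homogenize2} the correction $r_n - d_n$ is at worst $-\epsilon C_d n^2$, which is absorbed by the additive constant. Proposition \ref{prop:div_lap} then yields $v_n \leq f$, hence
\[
v_n - v_{avg_n} \leq (r_n - d_n) + \epsilon C_d n^2 \leq 2 \epsilon C_d n^2.
\]
Swapping the roles of $\eta$ and $\eta_{avg}$ with the competitor $v_n + (d_n - r_n) + \epsilon C_d n^2$ for $v_{avg_n}$ gives the matching lower bound, so $\|v_n - v_{avg_n}\|_{\infty} \leq 2\epsilon C_d n^2$ on $\bar{W}_n$.

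Dividing by $n^2$ converts this into $\sup_{x \in n^{-1}\bar{W}_n} |\bar{v}_n(x) - \bar{v}_{avg_n}(x)| \leq 2\epsilon C_d$, and since $\epsilon > 0$ was arbitrary (each at the cost of enlarging $n_0$), the supremum tends to zero on the full-probability event produced by Lemma \ref{divisible_homogenize2}. There is no real obstacle once the least action principle is in hand; the only subtlety is verifying that both the non-negativity constraint and the subsolution constraint survive after adding the Green's function correction, which is exactly what the additive constant $\epsilon C_d n^2$ buys. The nonlinearity that plagues the integer-valued Abelian sandpile has been entirely sidestepped by the linearity of $\Delta_{\Z^d}$ on the fractional-valued divisible model.
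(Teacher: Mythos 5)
Your proof is correct and is essentially the paper's own argument: both use the Green's-function correction $r_n - d_n$ controlled by Lemma \ref{divisible_homogenize2} together with the least action principle, the only cosmetic difference being that you perturb $v_{avg_n}$ to bound $v_n$ first while the paper perturbs $v_n$ to bound $v_{avg_n}$ first, and the two directions are symmetric in either presentation.
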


\begin{proof}
By definition, 
\[
\Delta_{\Z^d} v_n + \eta \leq 1, 
\]
in $W_n$, which can be rewritten as 
\[
\Delta_{\Z^d} (v_n - (r_n - d_n)) + \eta_{avg} \leq 1. 
\]
Let $\epsilon > 0$ be given. For $n$ large, 
Lemma \ref{divisible_homogenize2} implies $-(r_n - d_n) + \epsilon C n^2$ is positive in $W_n$. 
Hence, by the least action principle in $W_n$, 
\[
v_n - (r_n - d_n) + \epsilon C n^2  \geq v_{avg_n}
\]
and so, 
\[
v_n - v_{avg_n} \geq (r_n - d_n) - \epsilon C n^2. 
\]
Scale and invoke Lemma \ref{divisible_homogenize2} again to see that
\[
\bar{v}_n - \bar{v}_{avg_n} \geq -\epsilon C. 
\]
The other direction is identical. 
\end{proof}

\section{Concluding remarks} \label{sec:conclusion}
We conclude with some straightforward extensions of our results and open questions.

\subsection{Sandpiles with open boundaries}
The exact same argument given in this paper also works for sandpiles with the open boundary condition.
\begin{theorem}
Let $W$ be a bounded Lipschitz domain and let $v_n$ be the odometer function for the 
sandpile $W_n := \Z^d \cap n W$ with the open boundary condition: 
\[
v_n  \in \mathcal{L}(\eta, W_n) \cap \mathcal{S}(\eta, W_n) \mbox{ and } v_n = 0 \mbox{ on $\partial W_n$}. 
\] 
Almost surely, as $n \to \infty$, the rescaled functions $\bar{v}_n:= n^{-2} v_n([n x])$ converge uniformly to the unique viscosity solution $\bar{v} \in C(\R^d)$ of the deterministic equation
\[
\begin{cases}
\bar{F}_{\eta}(D^2 \bar{v} ) = 0 &\mbox{ in $W$} \\
\bar{v} = 0  &\mbox{ on $\partial W$},
\end{cases}
\]
where $\bar{F}_\eta$ is a unique degenerate elliptic operator. 
\end{theorem}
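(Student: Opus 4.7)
The plan is to run the three-step program of Section \ref{subsec:proof_of_theorem} essentially verbatim, with the obstacle formulation collapsing into a Dirichlet problem because the open-boundary odometer vanishes on $\partial W_n$ by construction. First I establish compactness of $\bar v_n$: the one-dimensional stabilizing toppling from Lemma \ref{lemma:one_dim} still stabilizes $\eta_{\max}$, so the least action principle gives $v_n(x) \leq C_d|x|^2$, while the uniform bound $|\Delta_{\Z^d} v_n| \leq C$ produces equicontinuity of $\bar v_n$. Arzela-Ascoli then extracts subsequential uniform limits $\bar v \in C(\overline W)$. For boundary attainment I build an interior barrier from Lemma \ref{lemma:one_dim}: since $W$ is Lipschitz, around each boundary point I can place a translate of the one-dimensional stabilizing function which dominates $v_n$ and decays quadratically, sandwiching $v_n$ between $0$ and $o(n^2)$ within $o(n)$ of $\partial W_n$; this forces $\bar v = 0$ on $\partial W$ uniformly.

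Next I argue that any subsequential limit is a viscosity solution of $\bar F_\eta(D^2 \bar v) = 0$ in $W$ by transferring the argument of Subsection \ref{subsec:proof_of_theorem}. Given two limits $v, v'$ coming from $\eta, \eta' \in \Omega_0$, suppose $\sup_W(v - v') > 0$; by the just-established boundary attainment, $\sup_{\partial W}(v - v') = 0$, so Lemma \ref{regularity}(1) yields a regular contact point $a \in W$. Set $l = \bar F_\eta(D^2 v'(a))$, so that $\mu(l, D^2 v'(a)) = \mu^*(l, D^2 v'(a)) = 0$ by Lemma \ref{flat}, and apply the ABP bound of Lemma \ref{ABP} in a ball $B_{rn}(a) \subset W_n$ to the perturbed quantities $v_n - q_M - nL_\phi - q_l$, reaching the usual Taylor-expansion contradiction against $D^2(v-v')(a) < -\delta I$. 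The contact point is strictly interior, so both the ABP comparison and the recurrence used inside Lemma \ref{flat} (via Lemma \ref{lowerbound}) are purely local to $B_{rn}(a)$ and insensitive to the Dirichlet data on $\partial W_n$. The case $a \notin \overline W$ of Subsection \ref{subsec:proof_of_theorem} does not arise here, since both $v$ and $v'$ vanish off $\overline W$. Hence $v = v'$ on $\overline W$, and the limit is deterministic.

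Since every subsequential limit equals a single deterministic $\bar v \in C(\overline W)$ satisfying $\bar F_\eta(D^2 \bar v) = 0$ in the viscosity sense in $W$ and $\bar v = 0$ on $\partial W$, the full sequence $\bar v_n$ converges uniformly to $\bar v$. \emph{The main obstacle}, as with Theorem \ref{thetheorem}, is the core comparison-in-the-limit packaged in Lemma \ref{flat}; the only genuinely new ingredient for the open-boundary version is the uniform boundary attainment on a Lipschitz $\partial W$, handled by the quadratic barrier of Lemma \ref{lemma:one_dim}. Degenerate ellipticity and Lipschitz continuity of $\bar F_\eta$ (Lemma \ref{lemma:basic_effective}) then guarantee uniqueness for the limiting Dirichlet problem, so the extracted $\bar v$ is the unique viscosity solution and the theorem follows.
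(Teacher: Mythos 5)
Your overall architecture matches what the paper intends (its "proof" is precisely the remark that the free-boundary argument transfers): interior compactness from the bounded Laplacian, the $\mu$/ABP comparison of subsequential limits through Lemmas \ref{flat}, \ref{regularity} and \ref{sub-additive-ergodic}, and the observation that the contact point is interior so the recurrence input from Lemma \ref{lowerbound} is local. The genuine gap is the one step you correctly identified as the only new ingredient: uniform boundary attainment. Your barrier does not work as described. A translate of the one-dimensional function $g$ of Lemma \ref{lemma:one_dim} with small support $\ell = o(n)$ is \emph{not} stabilizing on all of $W_n$: outside its central interval one has $\Delta_{\Z^d} g \in \{1,2\}$, so $\Delta_{\Z^d} g + \eta$ can exceed $2d-1$ at sites of $W_n$ lying in those outer slabs (recall $\eta_{\max} \geq 2d$ under the high-density assumption), and hence the least action principle (Proposition \ref{prop:least_action}) does not give $v_n \leq g$. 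This is exactly why Lemma \ref{compactness} takes $\ell = C_{d,W} n$ so that the non-stabilizing slabs fall outside $W_n$; but with that choice $g$ is of size comparable to $n^2$ everywhere on $W_n$, so it gives no smallness near $\partial W_n$. Moreover $g$ varies in a single lattice coordinate, so even a patched version can only control boundary points that are locally extremal in a coordinate direction, which a general Lipschitz boundary point need not be. A correct argument needs a different barrier: for instance, compare $v_n$ on $A = W_n \cap B_{rn}(z)$ (a localized least-action/comparison, using that the restriction of $v_n$ is legal and stabilizing in $A$ with its own boundary data) against $C n^2\,\omega_n + \psi$, where $\omega_n$ is the discrete harmonic function with data $1$ on $W_n\cap\partial B_{rn}(z)$ and $0$ on $\partial W_n \cap B_{rn}(z)$ (decaying like $(|x-z|/rn)^{\alpha}$ by the exterior-cone/Lipschitz geometry) and $\psi$ is a concave paraboloid absorbing $\eta_{\max}-(2d-1)$. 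Without some such boundary estimate you get neither equicontinuity up to $\partial W$ nor the uniform convergence with $\bar v = 0$ on $\partial W$ that the theorem asserts, and your step "$\sup_{\partial W}(v-v')=0$" is unjustified.

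A second, smaller point: your closing claim that degenerate ellipticity plus Lipschitz continuity of $\bar F_{\eta}$ "guarantee uniqueness for the limiting Dirichlet problem" is false as a general statement ($F \equiv 0$ is degenerate elliptic and Lipschitz, and every continuous function solves $F(D^2 u)=0$). In this paper the comparison principle for the effective equation is not a soft viscosity-theory fact; it is exactly the content of Lemma \ref{flat} together with the ABP-in-the-limit argument of Section \ref{subsec:proof_of_theorem}, which is what you should invoke (and essentially already do when you show any two subsequential limits coincide). With the boundary barrier supplied and the uniqueness claim routed through that machinery rather than a generic PDE citation, the proof would be complete and would coincide with the paper's intended argument.
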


\begin{figure}
\begin{centering}
 \includegraphics[width=0.32\textwidth]{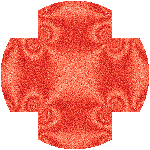} \\
 \includegraphics[width=0.32\textwidth]{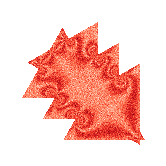}
  \includegraphics[width=0.32\textwidth]{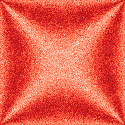}
 \caption{Start with an iid Bernoulli(3,5,1/2) sandpile configuration and stabilize with the open boundary condition. Darker reds are closer to 2 while lighter reds are closer to 3. The displays are approximations of the weak-* limits.}
  \label{closed_boundary}
\end{centering}
\end{figure}

Note that $\bar{F}_\eta$ is the {\it same} operator appearing in the limit of the free boundary sandpile. For example, if the background is the product Bernoulli measure, simulations reveal interesting pictures. These may help characterize $\bar{F}_\eta$ - see Figure \ref{closed_boundary}.

\subsection{Single source sandpile on a random background}
\begin{figure}
\begin{centering}
  \includegraphics[width=0.5\textwidth]{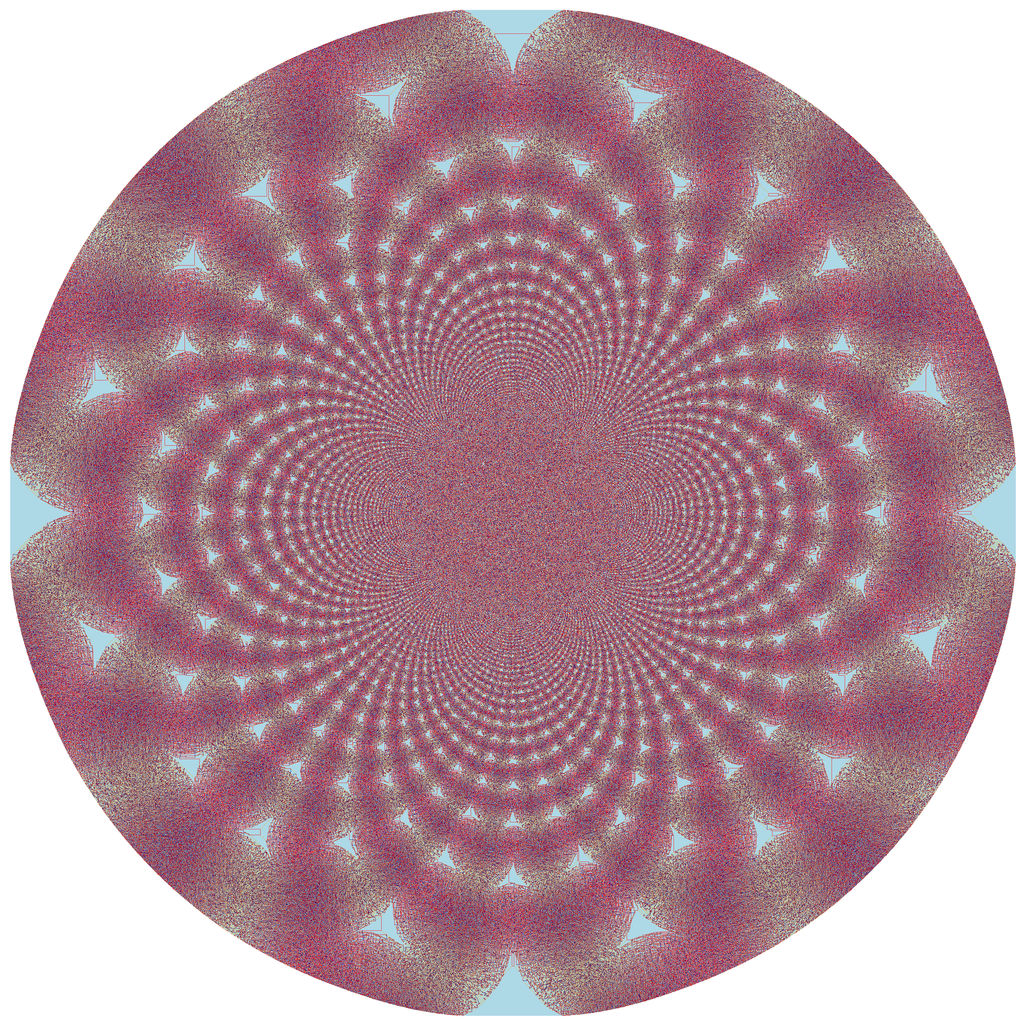}
 \caption{Start with $2^{25}$ chips at the origin in $\Z^2$ with an iid Bernoulli(0,-1,1/2) background and stabilize. What's displayed is an approximation of the weak-* limit.}
  \label{fig_SS_rand}
\end{centering} 
\end{figure}

Straightforward modifications of the arguments appearing above and in \cite{pegden2013convergence} show that single source sandpiles on random backgrounds also have scaling limits. 
See Figure \ref{fig_SS_rand} for an example. 

\begin{theorem}
Let $v_n$ be the odometer function for the sandpile with $n$ chips at the origin 
on a stationary, ergodic, random background $\eta_{\min} \leq \eta \leq \eta_{\max} = 2d-2$, 
\[
v_n  \in \mathcal{L}(\eta + n \delta_0 , \Z^d) \cap \mathcal{S}(\eta + n \delta_0, \Z^d) .
\] 
Almost surely, as $n \to \infty$, the rescaled functions $\bar{v}_n:= n^{-2/d} v_n([n^{1/d} x])$ converge locally uniformly away from the origin to $\bar{v} + G$, where $G$ is the fundamental solution of the Laplacian in $\R^d$ and $\bar{v} \in C(\R^d)$ is the unique viscosity solution of the obstacle problem, 
\[
\bar{v} := \inf\{ \bar{v} \in C(\R^d) | \bar{v} \geq -G \mbox{ and } \bar{F}_\eta(D^2 (\bar{v}+G)) \leq 0\}.
\]
\end{theorem}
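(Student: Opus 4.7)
The plan is to adapt the Pegden--Smart strategy for the single source sandpile, now replacing their deterministic operator with the effective operator $\bar{F}_\eta$ constructed in Section \ref{sec:identify} via the subadditive ergodic theorem. The condition $\eta_{\max} = 2d-2$ is crucial here: since $\eta$ alone is stable, every toppling in $v_n$ must be attributed to the $n$ chips placed at the origin, so the Pegden--Smart reduction via the fundamental solution $G$ applies essentially verbatim. The limit $\bar{v} + G$ of $\bar{v}_n$ represents what remains after the singular behavior at the origin, encoded by $G$, is accounted for.

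First I would establish local uniform compactness of $\bar{v}_n$ on $\R^d \setminus \{0\}$. Since $v_n$ is a stabilizing toppling function for $\eta + n \delta_0$, its discrete Laplacian is controlled by $\eta_{\max}$ and $\eta_{\min}$ away from the origin, and by comparing the response to the point source against the discrete Green's function (in the spirit of the estimates used in Lemma \ref{divisible_homogenize} and \eqref{difference}), $\bar{v}_n - G$ has uniformly bounded discrete Laplacian on compact subsets of $\R^d \setminus \{0\}$. Nonnegativity of $v_n$ together with the discrete Harnack inequality (Proposition \ref{harnack}) yields local equicontinuity, so Arzela--Ascoli produces subsequential limits of the form $\bar{v} + G$.

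Next I would identify the obstacle problem. The lower bound $\bar{v} \geq -G$ follows immediately from $\bar{v}_n \geq 0$. To prove $\bar{F}_\eta(D^2(\bar{v}+G)) \leq 0$ in the viscosity sense at points $a \neq 0$, I would run the strategy of Section \ref{subsec:proof_of_theorem}: if the limit were touched from below at $a$ by a quadratic $\phi$ with $\bar{F}_\eta(D^2 \phi(a)) > 0$, then Lemma \ref{ABP} combined with Lemma \ref{strict-convexity} and the vanishing $\mu^*(l_M, M) = 0$ from Lemma \ref{flat} would contradict the local behavior of $v_n$ near $[n^{1/d} a]$. Here we work on the event of full probability obtained by intersecting the countably many $\Omega_{l, M}$ of Lemma \ref{sub-additive-ergodic} over rational parameters. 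Minimality of $\bar{v}$ in the obstacle problem then follows from Proposition \ref{prop:least_action}(2): any competitor $u$ with $u + G$ a continuum supersolution of $\bar{F}_\eta$ lifts, via the discrete supersolution construction of Lemma \ref{regularity}, to lattice functions dominating $v_n$, and passage to the limit gives $u \geq \bar{v}$. Uniqueness of the obstacle problem solution, and hence convergence of the full sequence rather than a subsequence, comes from the comparison principle for $\bar{F}_\eta$ built up in the proof of Theorem \ref{thetheorem}.

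The main obstacle will be handling the singularity at the origin quantitatively. The discrete response of $v_n$ to the point source must be compared to $n^{2/d} G$ uniformly on compact subsets of $\R^d \setminus \{0\}$, which requires rescaled simple random walk estimates in the spirit of \eqref{difference} and a careful accounting of how ergodic averages of $\eta$ interact with the singular part. A secondary difficulty is verifying that the viscosity argument in Section \ref{subsec:proof_of_theorem}, which used test parabolas supported in large balls of uniform ergodic behavior, remains valid on annular domains avoiding a shrinking neighborhood of the origin; this follows from Proposition \ref{prop:subadditive} applied to the relevant Lipschitz domain $n^{-1/d} B_{rn}(a) \subset \R^d$.
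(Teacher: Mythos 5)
Your overall route --- compactness away from the origin, viscosity identification via $\mu$ and $\mu^*$, minimality via the least action principle and the constructions of Lemma \ref{regularity}, uniqueness from the comparison principle for $\bar F_\eta$ --- is the same adaptation of Pegden--Smart that the paper intends. However, there is a genuine gap at the step where you ``run the strategy of Section \ref{subsec:proof_of_theorem}'' and invoke the vanishing $\mu(l_M,M)=\mu^*(l_M,M)=0$ of Lemma \ref{flat} as a black box. The proof of Lemma \ref{flat}, and with it the comparison machinery of Section \ref{sec:proof_of_theorem}, is not background-free: it applies Dhar's burning comparison, Proposition \ref{burning}, which requires the stabilized configuration $\Delta_{\Z^d} w + \eta$ to be \emph{recurrent}, and that recurrence is supplied by Lemma \ref{lowerbound}, whose proof uses the high density assumption $\E(\eta(0)) > 2d-1$. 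In the theorem at hand the background satisfies $\eta \le \eta_{\max} = 2d-2$, so $\E(\eta(0)) \le 2d-2 < 2d-1$: Lemma \ref{lowerbound} is unavailable, the background is already stable (its zero-boundary odometer is identically zero), and a stable low-density background is in general not recurrent, so \eqref{eq:rec} and hence \eqref{eq:comp} cannot be used as in the proof of Lemma \ref{flat}. You even note that ``$\eta$ alone is stable'' and use it to attribute every toppling to the point source, without noticing that this same fact removes the hypothesis that makes the lemmas you cite work.

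The ``straightforward modification'' the paper alludes to is precisely to resupply this recurrence from the source rather than from high density: on the toppled cluster $\{v_n \ge 1\}$ created by the $n$ chips at the origin, Proposition \ref{recurrent} makes the resulting configuration recurrent, so the burning comparison is legitimate there; correspondingly the limit is only claimed to satisfy the obstacle inequality $\bar F_\eta(D^2(\bar v + G)) \le 0$, with the equation holding in the noncoincidence set $\{\bar v > -G\}$, which is where the cluster sits in the limit. Your argument should make this explicit: either rework Lemma \ref{flat} so that the near-maximizers of $\mu$ and $\mu^*$ are compared on top of a common toppling that is at least $1$ on the bulk of the ball (coming from the point source or from large boundary data), or localize the viscosity argument to points $a$ in the interior of the limit cluster and verify $v_n \ge 1$ on $B_{r n^{1/d}}([n^{1/d}a])$ before appealing to Proposition \ref{burning}. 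Without one of these, the appeal to Lemma \ref{flat} and to the Section \ref{subsec:proof_of_theorem} comparison rests on a hypothesis the present theorem does not grant. (A smaller point: equicontinuity away from the origin should be quoted from the bounded-Laplacian interior estimates used in Lemma \ref{compactness}, rather than from Proposition \ref{harnack}, which is a growth bound at a zero of the function.)
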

We would like to emphasize that essentially {\it any} well-posed PDE involving the operator $\bar{F}_{\eta}$ can be realized as the scaling limit of sandpiles with the arguments in this paper.

\subsection{Sandpiles with $\E(\eta(0)) \leq 2 d -1$} 
\begin{figure} 
\begin{centering}
  \includegraphics[width=0.5\textwidth]{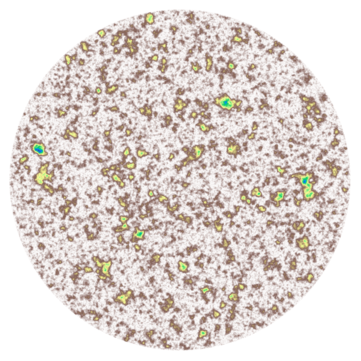}
 \caption{A heat map of the odometer function for a Bernoulli(0,4,0.528) initial sandpile started in a circle of radius $6 \cdot 10^3$ with the open boundary condition.}
  \label{fig_percolation}
\end{centering}
\end{figure}

The high density assumption, $\E(\eta(0)) > 2 d -1$, was used in two places in the paper. The first was to ensure 
that after stabilizing $\eta$ in a sufficiently large initial domain what is left is close to a recurrent configuration. 
The second was to show that solutions to $\bar{F}_{\eta}(D^2 \bar{v}) \leq 0$ also satisfy $\bar{F}_0(D^2 \bar{v}) \leq 0$.  

For the first usage, we can replace the assumption on $\E(\eta(0))$ by assuming that after stabilizing in all large enough nested volumes and removing an $o(n^2)$ portion of the boundary, what remains is recurrent.  For example, for each $p \in [0,1]$,
the following random sandpile on $\Z^2$ has a scaling limit by our argument as it is always recurrent,
\[
\eta(x) =  \begin{cases} 
2  \mbox{ with probability $p$}  \\
4 \mbox{ with probability $1 - p$}.
\end{cases}
\]

For the second usage, it suffices to use the weaker bound $\E(\eta(0)) \geq d$. And in fact,  if $\E(\eta(0)) < d$, the sandpile is almost surely {\it stabilizable}, \cite{fey2009stabilizability}. This implies, by conservation of density, (Lemma 2.10 in \cite{fey2009stabilizability}, Lemma 3.2 in \cite{levine2016divisible}), that the stable sandpiles converge weakly* to $\E(\eta(0))$ and so $\bar{v}_n \to 0$. 

This still leaves unaddressed sandpiles with $\E(\eta(0)) \in [d, 2d-1]$ which are not stabilizable, but also not close to a recurrent configuration. We believe, but cannot prove, that all such sandpiles have odometer functions with subquadratic growth.  See Figure \ref{fig_percolation} for an example of what could be such a sandpile.

\subsection{Characterizing the effective equation} 
Recently L. Levine, W. Pegden, and C. Smart characterized $\bar{F}_0$ on $\Z^2$ as the downwards closure of an Apollonian circle packing \cite{levine2016apollonian, levine2017apollonian}. Then, W. Pegden and C. Smart explained the microscale structure of the sandpile on $\Z^2$ by establishing a rate of the convergence to the continuum PDE and showing pattern stability \cite{pegden2017stability}. 

Analogous results for $\bar{F}_{\eta}$ are currently out of reach. Lemma \ref{regularity} shows that solutions to $\bar{F}_\eta(D^2 v ) \leq 0$ also satisfy $\bar{F}_0(D^2 v) \leq 0$. Numerical evidence also indicates that $\bar{F}_{\eta}$ is not always the Laplacian; one reason for this may be the extra log factor in the mixing time of the sandpile Markov chain, see the recent work of B. Hough, D. Jerison, and L. Levine \cite{hough2019sandpiles}. Their proof exploits so-called {\it toppling invariants} 
which describes sandpile quantities that are conserved under toppling (see also \cite{dhar1995algebraic}). We speculate that a characterization of the scaling limit will involve an interplay between these invariants and the choice of random distribution.

\section*{Acknowledgements}
I am grateful to Charles K. Smart for suggesting the program in \cite{armstrong2014quantitative}, patiently providing essential advice throughout this project, and carefully reviewing a previous draft of this paper.
I am also grateful to Steven P. Lalley for useful conversations, encouragement, and first introducing me to this problem. I thank Lionel Levine for generous, detailed comments on a previous draft 
and for helpful discussions. I also acknowledge Khalid Bou-Rabee, Nawaf Bou-Rabee, Gregory Lawler, and Micol Tresoldi for inspiring conversations.  The anonymous referee and editor also provided detailed feedback which led to a much improved exposition.

\bibliography{random_piles}{}
\bibliographystyle{amsalpha}

\end{document}